\documentclass{article}
\usepackage{graphicx} 
\usepackage{subfigure}
\usepackage{amsmath}
\usepackage{amsfonts}
\usepackage{amsthm}
\usepackage{amssymb}
\usepackage{mathrsfs}
\usepackage {color}
\usepackage {xcolor}
\usepackage{indentfirst}
\usepackage{float}
\graphicspath{{images/}} 
\usepackage[all]{xy}
\usepackage{hyperref}
\usepackage[linkcolor=red,anchorcolor=blue, colorlinks=red]{adjustbox} 

\providecommand{\subjclass}[2][]{%
  \textit{Mathematics Subject Classification (2020):} #2%
}

\providecommand{\keywords}[1]{%
  \textbf{Keywords and phrases:} #1%
}

\title{Generalized Hyperbolic Conical Circle Packings associated with Finite Polygonal Decompositions of Surfaces with Boundary
}
\date{}
\author{Guangming Hu, Jun Hu, Lishan Li and Yi Qi}

\begin{document}

\maketitle

\newtheorem{definition}{Definition}[section]
\newtheorem{theorem}[definition]{Theorem}
\newtheorem{proposition}[definition]{Proposition}
\newtheorem{lemma}[definition]{Lemma}
\newtheorem{remark}[definition]{Remark}
\newtheorem{corollary}[definition]{Corollary}

\begin{abstract}
Let $\mathcal{S}$ be a compact topological surface with finitely many genus and finitely many holes and let $\mathcal{D}$ be a polygonal decomposition of $\mathcal{S}$. In this paper, we consider the generalized hyperbolic conical circle packings
associated with $\mathcal{D}$. We first show that the boundary value problem has a unique solution $\textbf{k}$ by prescribing total geodesic curvatures of generalized hyperbolic conical circles centered at interior vertices and geodesic curvatures of generalized hyperbolic conical circles centered at boundary vertices.
Then we show that such a solution $\textbf{k}$ can be obtained by taking a limit of the packings inductively modified by Thurston's algorithm via an arbitrarily chosen initial generalized hyperbolic conical circle packing associated with $\mathcal{D}$ and with given boundary values. Thirdly, we develop the so-called discrete Schwarz-Pick lemma for the solution packing $\textbf{k}$ on $\mathcal{D}$.
\end{abstract}

\subjclass[2020]{Primary: 52C26}

\keywords{Generalized hyperbolic circle packing, Discrete Schwarz-Pick lemma, Discrete boundary value theorem, Thurston's algorithm}

\section{Introduction}
Discretizing the conformal structures on smooth surfaces has been a robust area of research in differential geometry and hyperbolic geometry, which has applications to the theory of general relativity. Circle packings and their combinatorial patterns serve as a collection of bricks of different shapes and a system of the combinatorial relationships among the bricks that characterize how the bricks are used to construct or approximate a conformal structure on a surface with or without boundary. 
 

In his work on the study of hyperbolic structures on 3-manifolds, Thurston \cite{thurston2022geometry} investigated circle packings on triangulated closed surfaces, revealing profound connections between circle packings and conformal structures on triangulated surfaces. In 1985, he presented a method to approximate a Riemann mapping from a simply connected hyperbolic domain to the open unit disk by discrete conformal mappings constructed through the use of circle packings. Approximating maps utilize regular hexagonal circle packings of the domain and corresponding packings on the unit disk, which are called Thurston finite Riemann mappings. The convergence of Thurston finite Riemann mappings to the Riemann mapping was proved by 
Rodin and Sullivan two years later in \cite{MR906396}. Since 1985, research has been successful in identifying the types of graph and associated data that ensure the existence and uniqueness of corresponding circle packings or circle patterns in Euclidean, spherical, and hyperbolic geometry. 
The classical Perron method or Colin de Verdière's variational principle in \cite{MR1106755} plays an important role in establishing the existence and uniqueness of circle patterns in hyperbolic or Euclidean background geometry. Under the geometry of the spherical metric, the uniqueness may fail if the prescribed data are not imposed on appropriate objects, similar to the McOwen-Troyanov theorem \cite{MR1224820, MR1005085} if prescribed data are varied under M\"{o}bius transformations. Applying Colin de Verdière's variational method, Nie obtained in \cite{MR4683863} the uniqueness of a circle pattern in the spherical background geometry by prescribing the total geodesic curvatures of the circles. A different proof of Nie's result, using the classical Perron method, is given in \cite{li2026perronsmethodsphericalideal}.

In\cite{MR2496046}, Guo-Luo investigated generalized circle packings of ten generalized hyperbolic triangles, proving the metric is uniquely determined by discrete curvature for six symmetric cases, which extends Thurston, Penner and Bobenko-Springborn’s rigidity results. However, for the generalized circle packing of types (1,1,-1) (triangles with 2 circles and 1 hypercycle) and (1,1,0) (triangles with 2 circles and 1 horocycle), the rigidity theorem fails to hold, leaving these cases unsolved. In \cite{MR4962537}, Ba-Hu-Sun applied  the total geodesic curvature  introduced by Nie \cite{MR4683863} to prove rigidity for all ten triangle types, settling the open problems in \cite{MR2496046}. Then the so-called discrete boundary value problem and discrete Schwarz-Pick lemma are considered in \cite{2024arXiv241106274H} for such generalized hyperbolic conical circle packings associated with triangulated oriented compact surfaces with boundaries. With respect to finite polygonal decompositions of compact surfaces with boundaries, the existence and uniqueness are obtained in \cite{MR4842734} for generalized circle packings by prescribing total geodesic curvatures of circles associated with all vertices. 

In this paper, we continue to consider generalized hyperbolic conical circle packings associated with polygonal decompositions of compact surfaces with boundaries, but the prescribed data that we use are total geodesic curvatures of the circles associated with interior vertices and geodesic curvatures of the circles associated with boundary vertices. We briefly call such a problem {\em a discrete boundary value problem for generalized hyperbolic conical circle packings}. 

The work in this paper is three-fold: (1) Apply the Perron method to prove the existence and uniqueness of the solution for the discrete boundary value problem for generalized hyperbolic conical circle packings associated with polygonal decompositions of compact surfaces with boundaries, which generalizes the same result obtained in \cite{2024arXiv241106274H} for a special case when graphs are induced by triangulations of the surfaces and employs a method different from Colin de Verdière's variational principle used in \cite{2024arXiv241106274H}. (2) Apply Thurston's algorithm to approximate the solution of the discrete boundary value problem and show the convergence of the approximations to the solution. (3) Establish a discrete Schwarz-Pick lemma for the solution of the discrete boundary value problem.

\subsection{Discrete boundary value problem}


In their seminal work in 1991, Beardon and Stephenson \cite{MR1115988} investigated the boundary value problem for circle packings associated with triangulations on a disk by prescribing the radii of hyperbolic circles centered at boundary vertices. Techniques involving normal families are used in their approach. Lately, the discrete boundary value problem is resolved in \cite{2024arXiv241106274H} for generalized hyperbolic conical circle packings associated with triangulations of compact surfaces with finitely many boundaries by applying Colin de Verdière's variational principle. The first result of this paper is to generalize the result of \cite{2024arXiv241106274H} for generalized hyperbolic conical circle packings associated with triangulations of compact surfaces with boundaries to hyperbolic circle packings associated with polygonal decompositions of compact surfaces. 

By a \textbf{\em generalized hyperbolic circle} on the hyperbolic plane $\mathbb{H}^2$ we mean a hyperbolic circle, a horocycle or a hypercycle (see Definition \ref{def of three types of curve} and Figure \ref{Fig1}). Each of them has a constant curvature, which we denote by $k$. For any hyperbolic circle, $k>1$; for any horocycle, $k=1$; for any hypercycle,  $0<k<1$. Any segment on a generalized hyperbolic circle is viewed/called a \textbf{generalized hyperbolic conical circle} (see Definition \ref{generalized conical circle} and Figure \ref{Fig2}).

In this paper, we consider generalized hyperbolic conical circle packings, associated with graphs induced by polygonal decompositions of surfaces (with or without boundaries), satisfying that the tangent points among the conical circles centered at the vertices of a face lie on a hyperbolic circle for each face. This condition is automatically satisfied when the polygonal decomposition is a triangulation. 

Now we are ready to state the first main result. We apply the Perron method, instead of Colin de Verdière's variational principle, to prove the following theorem.






\begin{theorem}\label{theorem-1.1} 
Let $S_{g,n}$ be a compact oriented topological surface with finitely many genus $g\geq 0$ and finitely many holes $n\geq 1$. Let $\mathcal{D}$ be a polygonal cellular decomposition of $S_{g,n}$ with the set of vertex $V$, a disjoint union of the set of boundary vertices $V^{\partial}$ and the set of interior vertices $V^{\circ}$. 

Given $\hat{\textbf{k}}:V^{\partial}\rightarrow \mathbb{R}_{+}$ and $\hat{\textbf{T}}:V^{\circ}\rightarrow \mathbb{R}_{+}$, the following two statements are equivalent:

\begin{enumerate}
    \item The discrete boundary value problem has a unique solution in the sense that there exists a unique generalized hyperbolic conical circle packing up to isometry (see Figure \ref{Fig5} for an illustration) associated with the polygonal decomposition $\mathcal{D}$ 
    with geodesic curvatures $\textbf{k}:V\rightarrow \mathbb{R}_{+}$ satisfying:
    \begin{itemize}
    \item[(i)] For each vertex $w\in V^{\partial}$, the geodesic curvature $\textbf{k}(w)$ of the corresponding generalized circle is equal to $\hat{\textbf{k}}(w)$;
    \item [(ii)] For each vertex $v_0\in V^{\circ}$, the total geodesic curvature $\textbf{T}_{\textbf{k}}(v_0)$ of the corresponding generalized circle is equal to $\hat{\textbf{T}}(v_0)$.
    \end{itemize}

    \item{$\hat{\textbf{T}}\in \mathcal{T}$ with
    \begin{equation}\label{equa-1} 
\mathcal{T}=\left\{\textbf{T}: V^{\circ}\rightarrow\mathbb{R}_{+} \begin{array}{|l} \sum_{v \in Q} T_{v}<\sum_{f \in F_Q} \pi \min \{N(f, Q), N(f)-2\} \\
\text{for any nonempty set } Q\subset V^{\circ}  \end{array}\right\}, 
\end{equation} 
where
$F_Q=\left\{f \in F: \exists \;v \in Q \text { such that } v \text { is a vertex of } f\right\}$, $V(f)$ is the vertex set of $f$,
$N(f)=\sharp V(f)$, and 
$
N(f,Q)=\sharp (V(f)\cap Q)$.}
\end{enumerate}

\end{theorem}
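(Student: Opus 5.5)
The proof will follow the Perron method, which the introduction announces as the replacement for Colin de Verdière's variational principle. First I reduce the geometry to the faces. Each face $f$ of $\mathcal{D}$ with vertices $v_1,\dots,v_m$ and curvatures $k_1,\dots,k_m$ carries a unique generalized hyperbolic polygon: the tangency points lie on a common hyperbolic circle $C_f$, and the generalized circles centered at the $v_i$ are orthogonal to $C_f$. I will take from the earlier part of the paper (the face-level lemmas) the following facts.

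\begin{enumerate}
\item The arc $\theta_i^f(\textbf{k})$ cut off on the circle at $v_i$ depends continuously on $\textbf{k}|_f$.
\item $\theta_i^f$ is strictly decreasing in $k_i$, and nondecreasing in each $k_j$ with $j\neq i$. This is the monotonicity underlying the maximum principle.
\item As $k_i\to 0$, the total curvature contribution $k_i\theta_i^f\to 0$.
\item As $k_i\to\infty$ with the others fixed, $k_i\theta_i^f\to\pi$.
\item If all $k_j$, $j\in V(f)\cap Q$, tend to $\infty$ while the remaining ones stay bounded, then $\sum_{j\in V(f)\cap Q} k_j\theta_j^f\to \pi\min\{N(f,Q),N(f)-2\}$. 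The value $\pi(N(f)-2)$ is the limit as the polygon degenerates to an ideal-like Euclidean polygon. So the total geodesic curvature $\textbf{T}_\textbf{k}(v)=\sum_{f\ni v}k_v\theta_v^f$ approaches the degenerate limits appearing in (\ref{equa-1}).
\end{enumerate}

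\textbf{Uniqueness and (1)$\Rightarrow$(2).} For uniqueness, I use a maximum principle. Suppose $\textbf{k}$ and $\textbf{k}'$ both solve the problem, and set $Q=\{v\in V^\circ: k'_v>k_v\}$, assumed nonempty. On $V^\partial$ the two agree, and off $Q$ we have $k'\le k$. Summing $\textbf{T}$ over $Q$ and using the monotonicity in fact 2, together with a Gauss–Bonnet type identity on the faces in $F_Q$, I expect to get $\sum_Q \textbf{T}_{\textbf{k}'}>\sum_Q\textbf{T}_{\textbf{k}}$ unless $\textbf{k}'=\textbf{k}$ on $F_Q$. This contradicts equality of the prescribed data.

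For necessity, I sum $\textbf{T}_\textbf{k}(v)$ over $v\in Q$ face by face. For each $f\in F_Q$, I show $\sum_{v\in V(f)\cap Q}k_v\theta_v^f<\pi\min\{N(f,Q),N(f)-2\}$. The first bound is $k_v\theta^f_v<\pi$ for a single conical circle. The second comes from the Gauss–Bonnet formula applied to the generalized polygon, whose total turning is bounded by $\pi(N(f)-2)$ minus a positive area term.

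\textbf{Existence, (2)$\Rightarrow$(1).} Let $\mathcal{P}$ be the set of subsolutions: all $\textbf{k}$ with $\textbf{k}|_{V^\partial}=\hat{\textbf{k}}$ and $\textbf{T}_\textbf{k}(v)\ge \hat{\textbf{T}}(v)$ for every $v\in V^\circ$. Setting $\textbf{k}$ to be large on interior vertices places it in $\mathcal{P}$ by fact 4, since $\hat{T}_v$ is bounded by the number of faces at $v$ times $\pi$ (this bound follows from (\ref{equa-1}) with $Q=\{v\}$). Define $k^*(v)=\inf_{\textbf{k}\in\mathcal{P}}k(v)$.

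By the monotonicity in fact 2, the minimum of two subsolutions is again a subsolution. Continuity then shows $\textbf{k}^*\in\mathcal{P}$, provided $k^*$ stays positive on $V^\circ$. Positivity follows from fact 3: as $k_v\to0$, $\textbf{T}(v)\to0<\hat T_v$.

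If $\textbf{T}_{\textbf{k}^*}(v)>\hat T_v$ at some $v$, then decreasing $k_v$ slightly stays inside $\mathcal{P}$. This uses fact 2: neighbors' $\textbf{T}$ only increase when $k_v$ decreases. That contradicts minimality, so $\textbf{k}^*$ is a solution.

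\textbf{Main obstacle.} The hard step is proving that $\mathcal{P}$ is nonempty with values bounded, so that the infimum is attained in the interior rather than at $\infty$. Equivalently, I must show that no subset $Q$ can escape to infinity along a minimizing sequence. My plan is as follows. If along a sequence $k_v\to\infty$ exactly for $v\in Q$, then fact 5 gives $\sum_Q\hat T_v\le\lim\sum_Q \textbf{T}=\sum_{F_Q}\pi\min\{N(f,Q),N(f)-2\}$. This contradicts the strict inequality in (\ref{equa-1}). The delicate part is establishing the face-level limit in fact 5 uniformly when the outside-$Q$ curvatures vary but stay bounded, including the boundary vertices.
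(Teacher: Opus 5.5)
\textbf{The gap is in existence.} Your necessity argument is the paper's (Theorem \ref{theorem-5.1}), and your maximum-principle uniqueness argument works once the signs are corrected (see the end). What is missing is any proof that your class $\mathcal{P}=\{\textbf{k}:\ \textbf{k}|_{V^\partial}=\hat{\textbf{k}},\ \textbf{T}_{\textbf{k}}\ge\hat{\textbf{T}}\text{ on }V^\circ\}$ is nonempty.

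Fact 4 is a one-variable statement: one curvature goes to $\infty$ with the others fixed. When all interior curvatures are large at once, every face with $N(f,V^\circ)\ge N(f)-2$ falls under Lemma \ref{lem-4.2}(ii). Its interior vertices then share a total that is less than $\pi(N(f)-2)$, split according to the relative rates.

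Here is a concrete failure. Suppose a triangle $f=(u,v,w)$ has $u,v\in V^\circ$, and take $k_u=k_v=K$. Since $T_{x,f}$ depends only on $k_x$ and the dual curvature $k_{P_f}$, we get $T_{u,f}=T_{v,f}$. Gauss--Bonnet gives $T_{u,f}+T_{v,f}<\pi$, so $T_{u,f}<\pi/2$. Using $T_{u,f'}<\pi$ on the other faces, $\textbf{T}_{\textbf{k}}(u)<\pi(d_u-\tfrac12)$ for every $K$, where $d_u$ is the number of faces at $u$. Yet $\hat T_u=\pi d_u-\epsilon$ together with $\hat T\equiv\delta$ on $V^\circ\setminus\{u\}$, where $\delta|V^\circ|<\epsilon<\pi/2$, lies in $\mathcal{T}$. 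So the bound from $Q=\{u\}$ does not produce an element of $\mathcal{P}$. Building one needs the whole system (\ref{equa-1}), which is essentially the existence problem itself.

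\textbf{The ``main obstacle'' paragraph does not close this gap, because it belongs to the opposite scheme.} Once $\mathcal{P}\neq\emptyset$, the infimum is bounded above by any member, so nothing can escape to $\infty$. If you run the escape argument on your class anyway, you only get $\sum_{v\in Q}\hat T_v\le\sum_{f\in F_Q}\pi\min\{N(f,Q),N(f)-2\}$. That is compatible with the strict inequality in (\ref{equa-1}), so there is no contradiction. The argument produces a contradiction only for the class $\textbf{T}_{\textbf{k}}\le\hat{\textbf{T}}$, and that is how the paper proves Theorem \ref{theorem-5.2}:
\begin{itemize}
\item the class $\mathcal{K}_1$ (with $\textbf{T}_{\textbf{k}}\le\hat{\textbf{T}}$ on $V^\circ$ and $\textbf{k}\le\hat{\textbf{k}}$ on $V^\partial$) is nonempty by Lemma \ref{lem-4.1}, your fact 3;
\item it is closed under pointwise maximum;
\item its supremum is finite, because an increasing sequence with $k_v\to\infty$ exactly on $Q$ would give $\sum_Q\hat T_v\ge\sum_{F_Q}\pi\min\{N(f,Q),N(f)-2\}$ by Lemma \ref{lem-4.2} (limits taken in $[0,\infty]$, so no uniformity issue arises);
\item the supremum is a solution.
\end{itemize}
Switching to this scheme repairs existence.

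\textbf{Fix the sign in fact 2.} $T_{i,f}$, and also the arc length, is strictly decreasing in $k_j$ for $j\neq i$ (Lemma \ref{lem-3.3-1}(b) and $\partial\ell/\partial k_P<0$). $T_{i,f}$ is increasing in $k_i$. As written, fact 2 would break closure under minima and reverse your uniqueness inequality, although your later sentences use the correct sign.

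With the correct signs your uniqueness proof goes through. For $u\in Q\cap V(f)$, $\partial\bigl(\sum_{v\in Q\cap V(f)}T_{v,f}\bigr)/\partial k_u$ equals the positive quantity in Lemma \ref{lem-3.3-1}(a) minus the negative terms $\partial T_{v,f}/\partial k_u$ with $v\notin Q$, so it is positive. Lowering the non-$Q$ coordinates only raises that sum further. This is a valid alternative to the paper's area comparison against the maximal element of $\mathcal{K}_1$.
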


\begin{remark}
One may apply Colin de Verdière's variational principle to give an alternative proof of Theorem \ref{theorem-1.1} since that method is successfully used in \cite{2024arXiv241106274H} to prove the special case when $\mathcal{D}$ is a  triangulation of a surface $\mathcal{S}$. Our paper deals with the general case that $\mathcal{D}$ is a polygonal decomposition of $\mathcal{S}$ and emphasizes an application of the Perron method, which is different from the variational method. 
\end{remark}

\subsection{Thurston's algorithm}\label{Thurston algorithm} In one of the appendices of \cite{MR906396},
Rodin and Sullivan described Thurston's algorithm for finding the Andreev circle packing for any triangulation of the extended complex plane. They also pointed out it converges quite rapidly.  Subsequently, Carter and Rodin \cite{MR1081937} developed a computational algorithm based on a modified Perron method to solve the boundary value problem in the Euclidean setting. Lately, the Thurston algorithm is applied in \cite{li2026perronsmethodsphericalideal} to approximate ideal spherical circle patterns associated with polygonal cellular decompositions of compact oriented surfaces. In this paper, 
we show that the Thurston algorithm continues to be valid in the course of finding the solution of the discrete boundary value problem in the setting of this paper. 

We continue to use the notation $S_{g,n}$, $\mathcal{D}$, $V^{\partial}$, and $V^{\circ}$ introduced in the previous subsection. Let $\hat{\textbf{k}}:V^{\partial}\rightarrow \mathbb{R}_{+}$ and $\hat{\textbf{T}}:V^{\circ}\rightarrow \mathbb{R}_{+}$, $\hat{\textbf{T}}\in \mathcal{T}$.
We call the circle packing from Theorem \ref{theorem-1.1} the target circle packing and denote it by $\hat{\mathcal{P}}$.

Given any initial vector 
$$\textbf{k}^0:V\longrightarrow\mathbb{R}_{+}: v \longmapsto k_v^0$$
with $k^0_w=\hat{\textbf{k}}(w)$ for any $w\in V^{\partial}$, there is a generalized circle packing $\mathcal{P}^0$ such that the geodesic curvature of the generalized circle centered at each vertex $v\in V$ is equal to $\textbf{k}^0(v)=k^0_v$. 

For any $v_i\in V^{\circ}=\{v_1,\cdots,v_N\}$, we modify $k_{v_i}^0$ to $k_{v_i}^1\in \mathbb{R}_{+}$, without changing the value of $\textbf{k}^0$ at any other vertex, so that the total geodesic curvature of the generalized circle centered at $v_i$ is equal to the prescribed value $\hat{\textbf{T}}(v_i)$; that is, the curvature value at each boundary point $w\in V^\partial$ remains at $\hat{\textbf{k}}(w)$ and 
the value of $k_{v_i}^1$ is determined by the equation
$$
\textbf{T}_{(k^0_{v_1},\cdots,k^0_{v_{i-1}},k^1_{v_{i}},k^0_{v_{i+1}},\cdots,k^0_{v_{N}})}(v_i)=\hat{\textbf{T}}(v_i).
$$

We define a new vector of geodesic curvatures as
$$\textbf{k}^1:V\longrightarrow\mathbb{R}_{+}: v \longmapsto k_v^1,$$
with $k^1_v=k^0_v=\hat{\textbf{k}}(v)$ for any $v\in V^{\partial}$. Then there is a generalized circle packing $\mathcal{P}^1$ such that the geodesic curvature of the generalized circle centered at each vertex $v\in V$ is equal to $\textbf{k}^1(v)=k^1_v$, which we call the $1^{st}$ approximation of the target circle packing $\hat{\mathcal{P}}$.

By replacing $\textbf{k}^0$ by $\textbf{k}^1$ and repeating the above process, we obtain $\textbf{k}^2$ and the $2^{nd}$ approximation $\mathcal{P}^2$ of $\hat{\mathcal{P}}$. Inductively, given $\textbf{k}^m$, define $\textbf{k}^{m+1}(v_i)$	to be the unique number $k^{m+1}_{v_i}$ such that $$
\textbf{T}_{(k^m_{v_1},\cdots,k^m_{v_{i-1}},k^{m+1}_{v_{i}},k^m_{v_{i+1}},\cdots,k^m_{v_{N}})}(v_i)=\hat{\textbf{T}}(v_i).
$$

All $\textbf{k}^{m+1}({v_i})$ are solved independently with the other interior coordinates fixed at their old values $\textbf{k}^m$; after all scalar equations have been solved, we set $\textbf{k}^{m+1}({v_i})=k^{m+1}_{v_i}$ for $i=1,\cdots,N$. Therefore, we obtain $\textbf{k}^{m+1}$ and the ${(m+1)}^{th}$ approximation $\mathcal{P}^{m+1}$ of $\hat{\mathcal{P}}$. We define ${\textbf{k}}^{*}:V\rightarrow\mathbb{R}_{+}$ as the vector of geodesic curvature of the generalized circles in  $\hat{\mathcal{P}}$.

The second part of this paper is to prove the convergence of this algorithm. 
\begin{theorem}\label{theorem-1.3}

Let \(\hat {\textbf{k}}:V^\partial\to\mathbb R_+\) and let
\(\hat {\textbf{T}}\in\mathcal T\), where \(\mathcal T\) is defined by $(\ref{equa-1})$ in Theorem \ref{theorem-1.1}. Then for any initial generalized circle packing $\mathcal{P}^0$, determined by an arbitrary vector 
$$\textbf{k}^0:V\longrightarrow\mathbb{R}_{+}: v \longmapsto k_v^0$$
with $\textbf{k}^0(w)=\hat{\textbf{k}}(w)$ at each $w\in V^{\partial}$, the sequence $\textbf{k}^m$ converges to ${\textbf{k}}^{*}$ where ${\textbf{k}}^{*}$ is the geodesic curvature vector
of the unique solution given by Theorem \ref{theorem-1.1}; equivalently,

$$
{\textbf{k}}^{*}(w)=\hat{\textbf{k}}(w), w\in V^{\partial},\quad \textbf{T}_{\textbf{k}^*}(v)=\hat{\textbf{T}}(v), v\in V^{\circ}.
$$

Consequently, the associated packings $\mathcal{P}^m$ converge, up to isometry,
to the target packing $\hat{\mathcal{P}}$, equivalently in the sense that their
geodesic curvature vectors converge to \(\textbf{k}^*\).
\end{theorem}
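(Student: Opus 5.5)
Theorem \ref{theorem-1.3} concerns a Jacobi-type iteration, since each coordinate update uses only the old values $\textbf{k}^m$. The plan is a monotonicity argument in the spirit of Perron's method, using three properties of the total geodesic curvature $\textbf{T}_{\textbf{k}}(v)$ that we expect to have (or to verify) from the geometry of a single polygonal face.
\begin{enumerate}
\item[(a)] $\textbf{T}_{\textbf{k}}(v)$ is continuous and strictly increasing in $k_v$, and it ranges over an interval containing $\hat{\textbf{T}}(v)$ whenever the other coordinates are fixed. This makes each scalar step well defined.
\item[(b)] $\textbf{T}_{\textbf{k}}(v)$ is non-increasing in $k_u$ for every $u\neq v$. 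Enlarging a neighbouring curvature shrinks that circle, which enlarges the face angle at $v$ and lengthens the arc.
\item[(c)] The uniqueness and the compactness estimates from the proof of Theorem \ref{theorem-1.1}.
\end{enumerate}

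Write $\Phi$ for the update map $\textbf{k}^m\mapsto\textbf{k}^{m+1}$. By (a) and (b), $\Phi$ is monotone: if $\textbf{k}\le\textbf{k}'$ coordinatewise, then $\Phi(\textbf{k})\le\Phi(\textbf{k}')$. To see this, raising the other coordinates lowers $\textbf{T}(v_i)$, so the solving value of $k_{v_i}$ must rise. Moreover, $\textbf{k}^*$ is a fixed point of $\Phi$.

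Next, I sandwich the initial data. I choose a subsolution $\underline{\textbf{k}}\le\textbf{k}^0$ satisfying $\textbf{T}_{\underline{\textbf{k}}}(v)\le\hat{\textbf{T}}(v)$ on $V^\circ$, with boundary values $\hat{\textbf{k}}$. I also choose a supersolution $\overline{\textbf{k}}\ge\textbf{k}^0$ satisfying $\textbf{T}_{\overline{\textbf{k}}}\ge\hat{\textbf{T}}$. For the subsolution, I take all interior curvatures equal to a small constant $\epsilon$, where huge hypercycles make $\textbf{T}$ small. For the supersolution, I take all interior curvatures equal to a large constant $K$. I expect the existence of both to follow from the Perron-family lemmas used for Theorem \ref{theorem-1.1}. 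For a subsolution, (a) and (b) give $\Phi(\underline{\textbf{k}})\ge\underline{\textbf{k}}$: at $v_i$ the current value has $\textbf{T}\le\hat{\textbf{T}}$, so the solving value is larger. Iterating and using monotonicity, $\Phi^m(\underline{\textbf{k}})$ is increasing and bounded above by $\Phi^m(\overline{\textbf{k}})$, which is decreasing. Both sequences therefore converge, and by continuity of $\Phi$ (implicit function theorem plus (a)) their limits are fixed points of $\Phi$. A fixed point of $\Phi$ is exactly a solution of the boundary value problem, so by the uniqueness in Theorem \ref{theorem-1.1} both limits equal $\textbf{k}^*$. Finally, $\Phi^m(\underline{\textbf{k}})\le\textbf{k}^m\le\Phi^m(\overline{\textbf{k}})$ forces $\textbf{k}^m\to\textbf{k}^*$.

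The convergence of the packings up to isometry then follows, because a packing is determined by its curvature vector and depends continuously on it.

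The main obstacle is establishing (b) together with the range claim in (a) uniformly over the polygonal faces. On a non-triangular face, the cocircularity condition on the tangent points couples all vertices of the face. I would first derive an explicit formula for the contribution of a face $f$ to $\textbf{T}(v)$ in terms of the curvatures at $V(f)$ and the radius of the circle through the tangent points. I would then show monotonicity by differentiating this formula, as is done for the triangle case in \cite{2024arXiv241106274H}.

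For the range, I would check limits. As $k_v\to0$, $\textbf{T}(v)\to0$. As $k_v\to\infty$, $\textbf{T}(v)$ tends to the sum over faces containing $v$ of the face-angle limits. I would compare this limit with the singleton case $Q=\{v\}$ of $\mathcal{T}$ to ensure $\hat{\textbf{T}}(v)$ is attained regardless of the neighbouring values. If this fails for some neighbour configurations, I would restrict to the order interval $[\underline{\textbf{k}},\overline{\textbf{k}}]$, where solvability is guaranteed by the sub- and supersolution inequalities and the intermediate value theorem.
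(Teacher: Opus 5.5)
Your proposal takes a genuinely different route from the paper. The paper passes to $s=\ln k$ on $V^\circ$ and shows directly that the update map $\Psi$ is an $\ell^\infty$-contraction on every box $B(\mathbf{s}^*,\rho)$, using the diagonal dominance of Proposition~\ref{prop5.2}. Most of your order-theoretic scheme is sound:
\begin{itemize}
\item monotonicity of $\Phi$ from (a)--(b), and $\Phi(\underline{\mathbf k})\ge\underline{\mathbf k}$;
\item monotone limits being fixed points, uniqueness via Theorem~\ref{theorem-1.1}, and the final sandwich;
\item the small-constant subsolution (Lemma~\ref{lem-4.1});
\item the solvability of each scalar step, checked against the singleton $Q=\{v\}$.
\end{itemize}

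The gap is the supersolution. A constant vector $K$ on $V^\circ$ is in general not a supersolution for any $K$. Let $v$ be an interior vertex whose $d_v$ faces are all triangles with only interior vertices. With three equal curvatures, symmetry and Gauss--Bonnet give $T_{v,f}=(\pi-\mathrm{Area}(\Omega_f))/3<\pi/3$, so $\mathbf T_K(v)<\pi d_v/3$. Yet $\mathcal T$ permits any $\hat{\mathbf T}(v)<\pi d_v$ when $\hat{\mathbf T}$ is tiny elsewhere, because every $Q\ni v$ has right-hand side at least $\pi d_v$. Appealing to the Perron family of Theorem~\ref{theorem-1.1} does not help either. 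Its proof only produces small subsolutions, whereas you need supersolutions dominating an arbitrary $\mathbf k^0$, and (a)--(b) alone give no such cofinality.

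The missing ingredient is a scaling property. Take $\overline{\mathbf k}=\lambda\mathbf k^*$ and $\underline{\mathbf k}=\lambda^{-1}\mathbf k^*$ on $V^\circ$, with boundary values $\hat{\mathbf k}$ and $\lambda\ge\max_v\max\{k^0_v/k^*_v,\,k^*_v/k^0_v\}$. By Proposition~\ref{prop5.2}(1), $\frac{d}{dt}\mathbf T_{e^t\mathbf k^*}(v_i)=\sum_{j}\partial\mathscr{T}_i/\partial s_j>0$. Hence $\mathbf T_{\lambda\mathbf k^*}\ge\hat{\mathbf T}\ge\mathbf T_{\lambda^{-1}\mathbf k^*}$ on $V^\circ$, and your argument then closes. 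This row-sum positivity in log coordinates is exactly the diagonal dominance behind the paper's contraction constant $1-\epsilon/M$, so both proofs rest on the same fact. Your version gives monotone convergence from sub- and supersolutions, but no rate, and it needs uniqueness from Theorem~\ref{theorem-1.1}. The paper's version gives geometric convergence and uniqueness of the fixed point directly.

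A minor point: your heuristic for (b) runs backwards. Increasing $k_u$ raises the dual curvature $k_P$. By Lemma~\ref{lem-3.1}(c),(d) this decreases the generalized angle at $v$ and shortens the arc, which is why $\mathbf T(v)$ decreases, as in Lemma~\ref{lem-3.3}(b).
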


\begin{remark}
The other method for the existence of circle packings with prescribed pattern/data is to study systems of ordinary differential equations that govern certain types of combinatorial curvature flows and investigate if the asymptotic limits of the solutions as the time parameter going to $\infty$ realize the desired circle packings. Here are some works in this direction. Parallel to the Ricci flow in differential geometry, Chow and Luo \cite{MR2015261} introduced the combinatorial Ricci flow to determine circle patterns with prescribed curvatures in both Euclidean and hyperbolic geometries. To address constant-curvature problems, Luo \cite{MR2100762} developed a combinatorial Yamabe flow on triangulated surfaces. Inspired by the Calabi flow in differential geometry, Ge \cite{GePhDthesis},\cite{MR3729504},\cite{MR3818085} established combinatorial Calabi flow to construct circle patterns with prescribed curvatures in Euclidean geometry. Then numerical methods to approximate solutions to these systems of differential equations provide different algorithms to find the circle packings with prescribed pattern/data, see \cite{MR4466650},\cite{MR4498159},\cite{MR4127872},\cite{MR3566936},\cite{MR4078690},\cite{MR3269185},\cite{MR3613434}.
\end{remark}

\subsection{Discrete Schwarz-Pick lemma}
The Schwarz-Pick lemma, a fundamental result in complex analysis, states that any holomorphic map $f$ from the open unit disk $\mathbb{D}$ into itself does not increase the hyperbolic distance between points on $\mathbb{D}$. 

In the context of hyperbolic circle packings, Beardon and Stephenson \cite{MR1115988} established a discrete version of the Schwarz-Pick lemma by comparing lengths of geodesics between vertices and areas of interstices when the boundary values are changed. Lately, this result has been extended in \cite{2024arXiv241106274H} to generalized hyperbolic conical circle packings associated with triangular decompositions of surfaces. In this paper, we continue to extend this result to generalized hyperbolic conical circle packings associated with finite polygonal cellular decompositions of surfaces, and furthermore we find two more properties. 

Consider two generalized hyperbolic conical circle packings on $S_{g,n}$ underlying the graph $G$ and having the same total geodesic curvature values at the interior vertices. Assume that the geodesic curvature values at the boundary vertices for one packing uniformly dominate those for the other. Our goal, for a version of the discrete Schwarz-Pick lemma, is to establish comparison relationships in the following four aspects.
\begin{itemize}
    \item [(1)] The geodesic curvatures of all generalized circles;

    \item [(2)] The lengths of geodesics between vertices (circle centers);

    \item [(3)] The areas of the interstices;

    \item [(4)] The lengths of the arcs that bound interstices. 
\end{itemize}
More precisely, we show the following theorem. 

\begin{theorem}[Discrete Schwarz-Pick lemma]\label{theorem-1.4}
Let $S_{g,n}$ be a compact oriented topological surface of genus $g\geq 0$ with $n\geq 1$ boundary components, and let $\mathcal{D}$ be a polygonal cellular decomposition of $S_{g,n}$. Let $\hat{\textbf{k}}_{i}:V^{\partial}\rightarrow \mathbb{R}_{+}$ and $\hat{\textbf{T}}_{i}:V^{\circ}\rightarrow \mathbb{R}_{+}$  $,\;i=1,2$, where $\hat{T}_i\in\mathcal{T}$ and $\mathcal{T}$ is defined by $(\ref{equa-1})$ in Theorem \ref{theorem-1.1}. Let $\mathcal{P}_i$ be the generalized circle packing on $(S_{g,n},\mathcal{D})$ such that $\hat{\textbf{k}}_i(w)$ gives the geodesic curvature of the generalized circle centered at each boundary vertex $w\in V^{\partial}$, and $\hat{\textbf{T}}_i(v)$ gives the total geodesic curvature at each interior vertex $v\in V^{\circ}$. Denote by $\textbf{k}_i:V\rightarrow\mathbb{R}_{+}$ the geodesic curvature vector of $\mathcal{P}_i$, where $\textbf{k}_i(w)=\hat{\textbf{k}}_i(w)$ for all $w\in V^{\partial}$ and $\textbf{T}_{\textbf{k}_i}(v)=\hat{\textbf{T}}_i(v)$ for all $v\in V^{\circ}$. Assume that $\hat{\textbf{k}}_{1}(w) \leq \hat{\textbf{k}}_{2}(w)$ for all $w \in V^{\partial}$, and $\hat{\textbf{T}}_{1}(v) = \hat{\textbf{T}}_{2}(v)$ for all $v \in V^{\circ}$. Then the following statements hold:

\begin{itemize}
    \item [(1)] Vertex-wise Comparison (Sub harmonicity)$:$  

\({\textbf{k}}_{1}(v) \leq {\textbf{k}}_{2}(v), \forall \;v\in V\).

    \item [(2)] Area Comparison$:$  

   \(Area_{\textbf{k}_{1}}(\Omega_f) \geq Area_{\textbf{k}_{2}}(\Omega_f)\), $\forall$ \(f\in F\), where \(Area_{\textbf{k}_{i}}(\Omega_f)\) denotes the hyperbolic area of the interstice $\Omega_f$ (see Figure \ref{Fig8}) corresponding to the face \(f\)  on \(\tilde{S}(\textbf{k}_{i})\), $i=1, 2$.

    \item [(3)] Arc Length Comparison$:$

    \(\ell_{\textbf{k}_{1}}(v, f) \geq \ell_{\textbf{k}_{2}}(v, f)\), $\forall$ $f\in F$, where \(\ell_{\textbf{k}_i}(v, f)\) denotes the length of the edge (see Figure \ref{Fig19}) of the interstice $\Omega_f$ (corresponding to the face \(f\))) that lies on the generalized circle centered at $v$, $i=1, 2$.

    \item [(4)] Distance Comparison$:$ 

   \(\rho_{\textbf{k}_{1}}(v_1, v_2) \geq \rho_{\textbf{k}_{2}}(v_1, v_2)\), $\forall$ \(v_1, v_2 \in \tilde{V} = \{v \in V \mid \textbf{k}_{1}(v) > 1\}\), where \(\rho_{\textbf{k}_{i}}\) denotes the distance function on the hyperbolic surface \(\tilde{S}(\textbf{k}_{i})\) (see Definition \ref{def-2.5}), $i=1, 2$. 

    \item [(5)] Uniqueness Criteria$:$ 
    
    The generalized circle packing $\mathcal{P}_1$ and $\mathcal{P}_2$ are the same (that is; \(\textbf{k}_{1} = \textbf{k}_{2}\)) provided that the equality holds in any of the following cases:
\begin{itemize}
    \item [(i)] At some interior vertex in (1). 
      
    \item [(ii)] At some some interstice $\Omega_f$ (which has at least one interior vertex) in (2). 
    \item  [(iii)] At some edge $\ell_{\boldsymbol{k}_1}(v,f)$ (with $v\in V^{\circ}$) of the interstice $\Omega_f$ in (3).
    \item [(iv)] At two vertices \(u\) and \(v\) (with at least one of them in $V^{\circ}$) in (4).
\end{itemize}
\end{itemize}

\end{theorem}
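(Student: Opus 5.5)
We prove (1) first with a maximum-principle argument built on the monotonicity of the total geodesic curvature, and then read off (2)–(5) from (1) by face-by-face monotonicity.

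\textbf{The monotonicity facts.} The basic local facts, which the Perron argument behind Theorem \ref{theorem-1.1} should already supply, concern a single vertex $v$ whose star has the curvatures of all neighbours fixed.
\begin{itemize}
\item $\textbf{T}_{\textbf{k}}(v)$ is strictly increasing in $\textbf{k}(v)$.
\item $\textbf{T}_{\textbf{k}}(v)$ is non-increasing, and strictly decreasing within each face, in the curvature $\textbf{k}(u)$ of any other vertex $u$ of a face containing $v$.
\end{itemize}
In face-wise terms, the angle-type contribution of $v$ in a face $f$ increases with $k_v$ and decreases with $k_u$ for $u\in V(f)\setminus\{v\}$. I would also use the homogeneity-type fact that scaling all curvatures in a face up does not increase the contribution at a vertex of maximal ratio.

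\textbf{Proof of (1).} Set $\lambda=\max_{v\in V} \textbf{k}_1(v)/\textbf{k}_2(v)$, or, better adapted to the geometry, work with a monotone reparametrization $u=\phi(k)$ in which each face's data depends only on differences. Suppose $\lambda>1$. Since $\hat{\textbf{k}}_1\le\hat{\textbf{k}}_2$ on $V^\partial$, the maximum is attained at some interior vertex $v_0$. Compare $\textbf{T}_{\textbf{k}_1}(v_0)$ with $\textbf{T}_{\textbf{k}_2}(v_0)$. By the two monotonicity facts the former is strictly larger, unless all neighbours in every face at $v_0$ also attain the maximal ratio. Since $\hat{\textbf{T}}_1=\hat{\textbf{T}}_2$, this forces the maximum set to spread to all neighbours. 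Connectedness of the 1-skeleton then pushes it to a boundary vertex, where the ratio is at most $1$, a contradiction.

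A cleaner alternative avoids scaling altogether. Use the Perron family $\Phi=\{\textbf{k}:\textbf{k}|_{V^\partial}\le\hat{\textbf{k}}_2,\ \textbf{T}_{\textbf{k}}(v)\ge\hat{\textbf{T}}(v)\ \forall v\in V^\circ\}$ from the proof of Theorem \ref{theorem-1.1}, whose maximal element is the solution. Then $\textbf{k}_1\in\Phi$ for the data $(\hat{\textbf{k}}_2,\hat{\textbf{T}})$, so $\textbf{k}_1\le\textbf{k}_2$ follows from maximality. I would take this route as primary. The strong maximum principle above is still needed for (5)(i): if $\textbf{k}_1(v_0)=\textbf{k}_2(v_0)$ at an interior $v_0$, apply the argument to $\textbf{k}_2-\textbf{k}_1\ge0$ attaining its minimum $0$ at $v_0$; propagation gives equality everywhere.

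\textbf{Proofs of (2)–(4).} Each is face-local, using explicit formulas for a generalized hyperbolic polygon with vertex curvatures $k_v$ and its inscribed tangency circle.
\begin{itemize}
\item For (2) and (3), show that the interstice area and each boundary arc length $\ell(v,f)$ are coordinatewise non-increasing in all $k_u$, $u\in V(f)$. The area follows from Gauss–Bonnet: area of the interstice equals $\pi(N(f)-2)$ minus the sum of contributions, together with the monotone dependence of contributions. Each is strictly decreasing in any coordinate that actually changes.
\item For (4), edge lengths $\rho(u,v)$ between adjacent vertices with $k>1$ are the sums of the two radii $\operatorname{arccoth}k$, hence decreasing. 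Since $\tilde V$ is defined using $\textbf{k}_1$ and $\textbf{k}_2\ge\textbf{k}_1$, all vertices involved remain genuine circles.
\item For general pairs in (4), I would not argue via paths through horocycle or hypercycle vertices. Instead, construct a $1$-Lipschitz map $\tilde S(\textbf{k}_1)\to\tilde S(\textbf{k}_2)$ face by face, sending generalized polygon to generalized polygon. Each face map is distance non-increasing because all side lengths and angles shrink accordingly, and I would invoke a Schwarz–Pick-type comparison for hyperbolic polygons. This gluing step is where I expect the main obstacle, especially for faces containing horocycle or hypercycle vertices, where the centre is not a point.
\end{itemize}

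\textbf{Uniqueness criteria (5)(ii)–(iv).} Equality in (2), (3) or (4) forces equality of the relevant face data. By strict monotonicity this gives $\textbf{k}_1(v)=\textbf{k}_2(v)$ at some interior vertex of that face. Case (i) then finishes the argument.
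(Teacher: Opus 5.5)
For (1)--(3) and (5)(i)--(iii) your plan is the paper's, modulo the slips listed in the second paragraph. That means Perron maximality for (1) as in Theorem~\ref{theorem-6.1}, propagation through strict monotonicity in the curvatures of face-mates for (5)(i), and facewise monotonicity of the interstice area and of $\ell(v,f)$ as in Theorems~\ref{theorem-6.3} and~\ref{theorem-6.4}.

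\textbf{The gap is (4), and with it (5)(iv).} Your face-by-face $1$-Lipschitz map is never constructed, and the reason you give for it is false. Raising a single $\textbf{k}(v)$ shortens the edges of $P_f$ at $v$ but can enlarge its angle at $v$: for three mutually tangent circles that angle tends to $\pi$ as $\textbf{k}(v)\to\infty$. You also leave open compatibility of the face maps along shared edges and the behaviour at horocycle centres and hypercycle axes. Even for adjacent vertices the edge need not realize $\rho$ on a cone surface, so the sum-of-radii remark does not settle that case.

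The paper (Theorem~\ref{theorem-6.2}) builds no global map; it transports a single minimizing path.
\begin{itemize}
\item It raises one curvature at a time, which is legitimate since only $\textbf{k}_1\le\textbf{k}_2$ is used.
\item It takes a minimizing path in $\tilde S(\textbf{k}_1)$, checks that it avoids horocycle centres and hypercycle axes, and cuts it into segments inside the polygons.
\item It replaces each segment meeting the star of the changed vertex by the segment in $\tilde S(\textbf{k}_2)$ whose endpoints keep their distances to the tangency points, so the new pieces still join up.
\item Lemma~\ref{lem-3.8} makes each replacement strictly shorter. The subpolygon cut off away from the changed vertex keeps its side lengths, while all its generalized angles drop because $k_P$ rises (Lemmas~\ref{lem-3.2} and~\ref{lem-3.1}). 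So Lemma~\ref{lem-3.6} applies, with horocycles handled by approximation (Lemmas~\ref{lem-3.7} and~\ref{lem:uniform-negative-derivative}).
\end{itemize}
That strictness, not any ``equality of face data'', is what yields (5)(iv). If $\textbf{k}_1\ne\textbf{k}_2$, uniqueness forces a strict inequality at some boundary vertex. Then (5)(i) makes every interior inequality strict, and a minimizing path with an interior endpoint already starts in a star where the curvature changed.

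\textbf{Smaller points that need repair.}
\begin{itemize}
\item Your family $\Phi$ has the $\textbf{T}$-inequality reversed. Since $\textbf{T}_{\textbf{k}}(v)$ increases in $\textbf{k}(v)$ and decreases in the curvatures of face-mates, the family closed under $\max$ whose supremum is the solution is $\{\textbf{T}_{\textbf{k}}\le\hat{\textbf{T}} \text{ on } V^{\circ},\ \textbf{k}\le\hat{\textbf{k}}_2 \text{ on } V^{\partial}\}$, the paper's $\mathcal{K}$ in (\ref{equa42}). Your $\Phi$ is closed under $\min$ instead, and it is not bounded by $\textbf{k}_2$: multiplying the interior values of $\textbf{k}_2$ by $\lambda>1$ raises every $\textbf{T}(v)$ by Proposition~\ref{prop5.2}(1).
\item The scaling fact in your maximum-principle sketch points the wrong way. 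You need uniform scaling to strictly \emph{increase} $\textbf{T}(v_0)$, which gives the contradiction at once, as in Theorem~\ref{theorem-6.5}.
\item For the area, the contributions $T_{j,P}$ move in opposite directions. Gauss--Bonnet therefore only helps together with the face-sum monotonicity of Lemma~\ref{lem-3.3-1}(a), equivalently (d).
\item The propagation in (5)(i), yours and the paper's alike, travels only through interior vertices, because boundary vertices carry no $\textbf{T}$-equation. Suppose an edge of $\mathcal{D}$ joins two boundary vertices through the interior, with interior vertices on both sides. Boundary data that agree on one side but differ on the other give $\textbf{k}_1=\textbf{k}_2$ at the interior vertices of the first side only. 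So (5) tacitly needs a connectivity hypothesis on the interior vertices.
\end{itemize}
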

\begin{remark} For the special case where $\mathcal{D}$ is a triangular decomposition of $S_{g, n}$, properties (1), (2), and (3) of the previous theorem have been obtained in \cite{2024arXiv241106274H}. 
\end{remark}
\bigskip

The remainder of the paper is organized as follows. Section~2 provides background information for generalized hyperbolic conical circle packings associated with polygonal decompositions $(S_{g,n},\mathcal D)$, including generalized
circle configurations, induced hyperbolic polygons and packing metrics, and
generalized stars. Section~3 gives a summary on local variation formulas and monotonic properties of some variables involved in the formulas.
In Section~4, we show how geodesic segments on a generalized circle configuration have their lengths varied when the curvatures of the circles are changed.  
In Section~5, we apply the Perron method, through the construction of generalized subpackings and superpackings, to prove the existence and uniqueness result for
the discrete boundary value problem (Theorem~\ref{theorem-1.1}). In Section~6, we analyze
Thurston's iterative algorithm and prove its convergence to the unique target packing (Theorem~\ref{theorem-1.3}). In Section~7, we establish the discrete
Schwarz--Pick lemma (Theorem~\ref{theorem-1.4}) by deriving vertex-wise,
area, arc-length, and distance comparison results, and a maximum
modulus principle for generalized circle packings in the hyperbolic metric setting (Theorem~\ref{theorem-6.5}).

\section{Preliminary}
\subsection{Generalized hyperbolic conical circle configuration}\label{sec-2.1} 
Let us first give the definition of generalized hyperbolic circle.
\begin{figure}[ht]
{
\centering
\includegraphics[height=4cm]{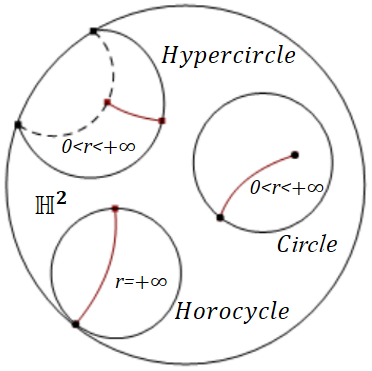} 
\caption{Generalized hyperbolic circles in $\mathbb{H}^2$.}
\label{Fig1}
}
\end{figure}

\begin{definition}[Circle, horocycle and hypercycle in $\mathbb{H}^2$]\label{def of three types of curve} Denote by $\mathbb{H}^2$ the two-dimensional hyperbolic plane, modeled by the upper half plane or the open unit disk, and denote by $\partial\mathbb{H}^2$ the boundary of $\mathbb{H}^2$.
\begin{itemize}
\item [(1)] A hyperbolic \textbf{circle} \( C(z_0, r) \) centered on \( z_0 \in \mathbb{H}^2 \) and of radius \( r \) is defined as the locus

\[
C(z_0, r) = \{ z \in \mathbb{H}^2 : d(z, z_0) = r \},
\]

where \( d(z, z_0) \) denotes the hyperbolic distance between \( z \) and \( z_0 \). 

    \item [(2)] By a \textbf{horocycle} we mean a circle on $\mathbb{H}^2$ that is tangent to $\partial\mathbb{H}^2$, which is viewed as a hyperbolic circle  centered at the tangent point and of radius $\infty$. 

    \item [(3)] Given a geodesic \( \gamma \) on \( \mathbb{H}^2 \) and a non-negative real number $r$, a \textbf{hypercycle} with axis \( \gamma \) and of distance \( r \) is defined as a connected component of the set
\[
C(\gamma, r) = \{ z \in \mathbb{H}^2 : d(z, \gamma) = r \},
\]

where \( d(z, \gamma) \) represents the minimal hyperbolic distance from \( z \) to \( \gamma \). We view $\gamma $ as the ``center" of the hypercycle and $r$ as the radius of the hypercycle. 
\end{itemize}
\end{definition}
Note that (i) the center of a hyperbolic circle is a point on \( \mathbb{H}^2 \); (ii) the center of a horocycle is an ideal point on the boundary \( \partial\mathbb{H}^2 \); (iii) the center of a hypercycle is defined as the geodesic in \( \mathbb{H}^2 \) connecting the boundary points of the hypercycle. Each of them has a constant geodesic curvature $k$, and $0<k<1$ for a hypercycle, $k=1$ for a horocycle, and $k>1$ for a hyperbolic circle. Furthermore, $r$ is expressed as a function of $k$ as follows:

\begin{equation}\label{equa-4}
 r(k)= \begin{cases}\operatorname{arctanh} k & \text { if } 0<k<1 \\ +\infty & \text { if } k=1 \\ \operatorname{\text{arccot}h} k & \text { if } k>1\end{cases}
 \end{equation}

 When $k\in[1,+\infty)$, denote by $\alpha$ the cone/central angle of an arc $l$ on a circle or horocycle of geodesic curvature $k$. When $k\in(0,1)$, given an arc $l$ in a hypercycle $C$ of geodesic curvature $k$, let $\gamma_1$ and $\gamma_2$ be the geodesics that pass through the end points of $l$ and are perpendicular to the axis $\gamma $ of $C$, and we define $\beta$ as the length of the arc on $\gamma $ between $\gamma_1$ and $\gamma_2$. We call $\alpha $ or $\beta$ the \textbf{\em inner angle} of $l$, which is illustrated on Figure \ref{Fig2}.  
\begin{figure}[ht]
{
\centering
\includegraphics[height=4cm]{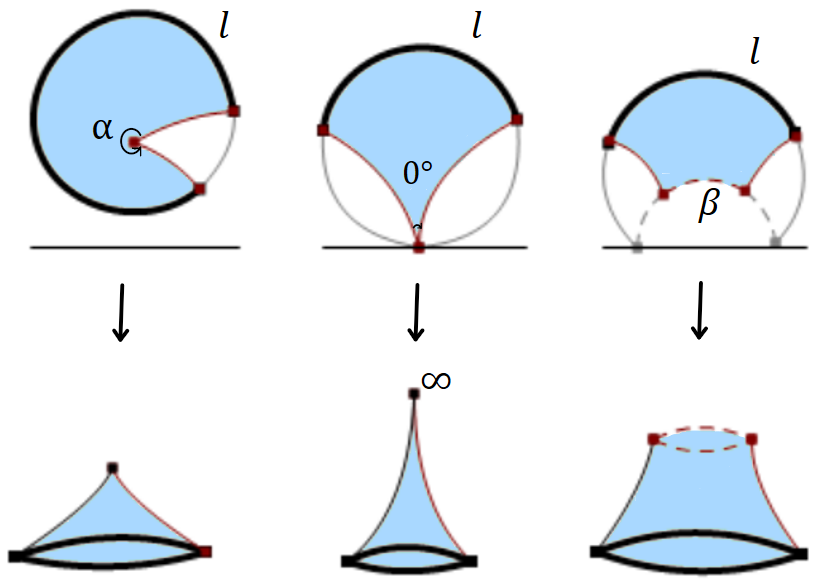} 
\caption{Generalized hyperbolic conical circles.}
\label{Fig2}
}
\end{figure}

\begin{definition}[Generalized circle]\label{generalized conical circle}
The black arc $l$ on the left (resp. middle) drawing of Figure \ref{Fig2} is called a conical hyperbolic circle (resp. conical horocycle) by viewing it as the boundary of a cone obtained by gluing the two radial sides of the shaded region, where $k\in[1,+\infty)$. The black arc $l$ on the right drawing of Figure \ref{Fig2} is called a hypercircle by viewing it as one of the boundaries of the surface obtained by gluing the two sides perpendicular to the axis $\gamma$, where $k\in (0, 1)$. In this paper, we simply call $l$ in any of the three cases a \textbf{generalized circle}. 
\end{definition}

Given a generalized circle $l$, Table \ref{tab-1} shows the relationships among radius $r$, geodesic curvature $k$, generalized inner angle $\alpha$ (or $\beta$) and arc length $\ell$ of $l$. The total geodesic curvature $T$ of $l$ is given by $T=k\ell$, where $k, \ell\in (0, +\infty )$.

\begin{table}[h!]
 \centering
 
 \label{tab-1}
\begin{tabular}{|c|c|c|c|}\hline
 & Circle & Horocycle & hypercycle \\ \hline
Radius & $r\in(0,+\infty)$ & $r=+\infty$ & $r\in(0,+\infty)$\\ \hline
Geodesic curvature& $k=\coth r$  & k=1 & $k=\tanh r$  \\ \hline
Inner angle & $\alpha\in(0,+\infty)$  & $\alpha=0$ & $\beta\in(0,+\infty)$ \\ \hline
Arc length& $\ell=\alpha\sinh r$ & $\ell$ & $\ell=\beta\cosh r$  \\ \hline
Total geodesic curvature& $T=\alpha\cosh r$ & $T=\ell$ & $T=\beta\sinh r$ \\ \hline
\end{tabular}
\caption{Relationships among the measures of a generalized hyperbolic conical circle.}
\end{table}

\begin{figure}[ht]
{
\centering
\includegraphics[height=4cm]{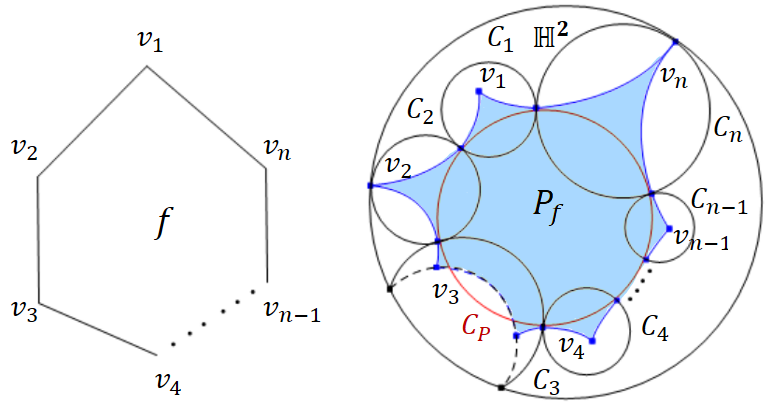} 
\caption{Generalized circle configuration and induced hyperbolic polygon.}
\label{Fig3}
}
\end{figure}
\begin{definition}[Generalized circle configuration]\label{def-2.3}
Let $n \geq 3$ be an integer. A collection $\mathcal{P}^0 = \{C_1^0, C_2^0, \ldots, C_n^0\}$ of $n$ generalized circles on $\mathbb{H}^2$ is called a generalized circle configuration if it satisfies the following conditions:

\begin{itemize}
\item [(H1)] Tangency: For each $i \in \mathbb{Z}/n\mathbb{Z}$, the circles $C_i^0$ and $C_{i+1}^0$ are externally tangent at a point $T_i^0$, and $C_i^0 \cap C_{i+1}^0 = \{T_i^0\}$.

\item [(H2)] Simplicity: The configuration $\mathcal{P}^0$ is simple, i.e., $C_i^0 \cap C_j^0 = \emptyset$ for $|i-j| > 1$ (modulo $n$), and no three circles meet at a common point.

\item [(H3)] Convexity: The configuration $\mathcal{P}^0$ bounds a compact  convex region $P_f^0 \subset \mathbb{H}^2$ with non-empty interior.

\item [(H4)] There exists a (\textbf{regular}) hyperbolic circle $C_P$ that contains the tangent points between any two of $C_i$'s and is perpendicular to all $C_i$'s. The hyperbolic circle $C_P$ is called the dual circle of $\{C_i\}_{i=1}^{n}$. 

\end{itemize}
\end{definition}
We emphasize that $C_P$ in the previous definition is a regular hyperbolic circle, for which there is no singularity at the center. Figure \ref{Fig3} shows a generalized circle configuration. 

\begin{definition}[Induced hyperbolic polygon by a generalized circle configuration]\label{def-2.4}
Let $\{C_i\}_{i=1}^{n}$ be a generalized circle configuration. The blue region on Figure \ref{Fig3} is a hyperbolic polygon induced by this configuration, which we denote by $P_f$. More precisely, the vertices and edges of $P_f$ are obtained as follows.

\begin{itemize}
    \item [(1)] Given $i \in\{1,\cdots,n\}$, if $C_i$ is a conical hyperbolic circle or horocycle, the center of $C_i$ is a vertex of $P_f$; if $C_i$ is a hypercycle, both endpoints of the axis of $C_i$ are vertices of $P_f$ and both are viewed as the centers of this hypercycle.
    \item [(2)] The edges of $P_f$ are comprised of the hyperbolic geodesics connecting the centers of any two tangent circles and the hyperbolic geodesics connecting both centers of each hypercycle. 
\end{itemize} 
\end{definition}

\begin{lemma}[\cite{MR4842734}]\label{lem2.5}
Given a vector $\textbf{k}=\left(k_1, \cdots, k_n\right) \in \left(\mathbb{R}_{+}\right)^n$, there exists a unique generalized circle configuration (up to isometry under hyperbolic metric) such that the geodesic curvature of $C_i$ is equal to $k_i$ for $i=1,\cdots,n$.
\end{lemma}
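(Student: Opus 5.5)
The proof will use the dual circle $C_P$ from (H4) to reduce the existence and uniqueness of the configuration to existence and uniqueness of a single real parameter, namely the radius $R$ of $C_P$.

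\emph{Local description.} Suppose a configuration with curvatures $k_1,\dots,k_n$ exists. Each $C_i$ is orthogonal to $C_P$, and consecutive circles are tangent at points of $C_P$. So $C_i$ meets $C_P$ in exactly two points $T_{i-1},T_i$, and the arc of $C_P$ between them has some central angle $\theta_i$ at the center $O$ of $C_P$. Orthogonality together with the common tangency forces the circles to sit as described, and convexity (H3) forces $\sum_i\theta_i=2\pi$. For a generalized circle of curvature $k$ orthogonal to a circle of radius $R$ and cutting off central angle $\theta$, elementary hyperbolic trigonometry gives a closed formula. Consider the right triangle with vertices $O$, a tangency point $T$, and the center of $C_i$ (or the appropriate quadrilateral/trirectangle when $k\le 1$). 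It yields a uniform relation of the form
\[
\tan\frac{\theta}{2}=\frac{1}{k\,\sinh R}
\]
in all three cases $k>1$, $k=1$, $k<1$. I will verify this relation once via the Lorentz or Poincaré model by writing $C_i$ as the intersection of $\mathbb{H}^2$ with a plane orthogonal to that of $C_P$. The point is that the angle at $T$ between the radius $OT$ and $C_i$ is a right angle, so the geodesic curvature of $C_i$ is read off from the angle function.

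\emph{Reduction to one equation.} The configuration is then equivalent to a solution $R>0$ of
\[
F(R):=\sum_{i=1}^n 2\arctan\frac{1}{k_i\sinh R}=2\pi .
\]
Each summand decreases strictly and continuously in $R$, from $\pi$ as $R\to0^+$ to $0$ as $R\to\infty$. Hence $F$ decreases strictly from $n\pi>2\pi$ (this uses $n\ge3$) to $0$, and there is a unique root $R$.

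\emph{Construction and uniqueness.} Given this $R$, place $C_P$ centered at $O$ and mark points $T_0,\dots,T_{n-1}$ on it with successive central angles $\theta_i$. Let $C_i$ be the unique generalized circle through $T_{i-1},T_i$ that is orthogonal to $C_P$ and lies outside the disk of $C_P$; its curvature is $k_i$ by the formula above. Then I check (H1)–(H4):
\begin{itemize}
\item (H1): tangency at $T_i$ holds because both $C_i$ and $C_{i+1}$ are orthogonal to $C_P$ at $T_i$, so they share the tangent line there.
\item (H2): simplicity holds because $C_i$ lies in the sector-like region over its arc, and these regions are disjoint away from the tangency points.
\item (H3): the bounded region enclosed by the arcs of the $C_i$ inside $C_P$ is the required compact region.
\item (H4): holds by construction.
\end{itemize}
Uniqueness up to isometry follows because any configuration determines the same $R$ (it must solve $F(R)=2\pi$) and the same angles $\theta_i$, hence is congruent to the constructed one.

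The main obstacle is justifying the geometric setup. First, (H4) together with (H1)–(H3) must force each $C_i$ to lie outside $C_P$ with its arc subtending exactly $\theta_i$, with the $\theta_i$ summing to $2\pi$ rather than exceeding it by winding. Second, the three-case trigonometric formula must hold uniformly. I expect the hypercycle case ($k<1$), where the center is a geodesic, to need the most care. I would handle it with Lorentzian normals, using $k=|\langle n,n\rangle|^{-1/2}$-type expressions for spacelike, lightlike and timelike normals.
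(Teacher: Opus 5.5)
Your proposal is correct and follows essentially the route the paper relies on through the cited reference. The dual-circle radius is fixed by $\sum_i\theta_i=2\pi$ with $\cot(\theta_i/2)=k_i\sinh R=k_i/\sqrt{k_P^2-1}$, which is exactly the relation in Appendix~\ref{sec-7.1} and in the proof of Lemma~\ref{lem-3.2}; your strict decrease of $F$ is the same fact as $\partial\theta/\partial k_P>0$ in Lemma~\ref{lem-3.1}.

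Two points need adjusting. First, the no-winding step you flag follows from (H1) and (H2), not from (H3). External tangency in (H1) puts consecutive arcs of $C_P$ on opposite sides of $T_i$, so $\sum_i\theta_i=2\pi m$; in (H2), non-adjacent circles are disjoint and no three meet, so their endpoint pairs on $C_P$ cannot interleave, and together with $\theta_i<\pi$ this forces $m=1$. Second, the interstice you offer for (H3) is not convex (it has cusps at the $T_i$ and its sides bulge inward), so you must say which region the loosely worded (H3) refers to.
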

Therefore, we denote by $\textbf{k}=\left(k_1,\cdots, k_n\right)$ the generalized circle configuration $\{C_i\}_{i=1}^{n}$ such that the geodesic curvature of $C_i$ is $k_i$ for each $i=1,\cdots,n$.

\subsection{Generalized circle packing associated with polygonal decomposition of surface}
Let $S_{g,n}$ be a compact oriented topological surface of genus $g$ having $n$ holes. 

\begin{definition}[Finite polygonal cellular decomposition]\label{def of poly decomposition} Let $V$, $E$ and $F$ be finite collections of 0-cells, 1-cells and 2-cells on $S_{g,n}$ respectively.
We call $\mathcal{D}=\{V,E,F\}$ a \textbf{finite polygonal cellular decomposition} of $S_{g,n}$ if the following conditions are satisfied:
\begin{enumerate}
\item[(I)] Each 0-cell of $V$ is an endpoint of at least three 1-cells.
\item[(II)]  Any two 1-cells of $E$ do not intersect, except possibly at endpoints.
\item[(III)]  The boundary of each 2-cell of $F$ is composed of at least three 1-cells of $E$; whenever two 2-cells of $F$ intersect, the intersection is a 1-cell in $E$; the union of all 2-cells is equal to $S_{g,n}$.
\item[(IV)] Every boundary of $S_{g,n}$ contains at least three 0-cells of $V$.
\end{enumerate}
\end{definition}

Note that Condition (IV) is required for any finite polygonal cellular decompositions considered in this paper. It is clear that when  $\mathcal{D}=\{V,E,F\}$ is a finite polygonal cellular decomposition of $S_{g,n}$, the union of all 1-cells of $E$, called the 1-skeleton of $\mathcal{D}$ and denoted by $G_{\mathcal{D}}$, is a simple graph on  $S_{g,n}$, which means that the graph does not contain a loop or two edges between any two vertices. 

\begin{definition}[Generalized hyperbolic conical circle packing]\label{def-circle packing}
With the same notation as introduced in Definition \ref{def of poly decomposition}, by a generalized hyperbolic conical circle packing associated with $\mathcal{D}$ we mean a collection $\{C_v\}_{v\in V}$ of generalized hyperbolic conical circles such that the circles centered at the vertices of each face form a generalized circle configuration. 
\end{definition}

Given a geodesic curvature vector $\textbf{k}=(k_1, k_2, \cdots, k_{|V|})\in (\mathbb{R}_{+})^{|V|}$, one may construct a generalized circle packing 
through the following two steps. 
\begin{enumerate}
\item For each face  $f\in F$ with $m$ vertices $i_{1},i_{2},\cdots, i_{m}\in V$, we apply Lemma \ref{lem2.5} to obtain a generalized circle configuration with geodesic curvatures of circles equal to $k_{i_1}, k_{i_2}, \cdots, \text{ and }k_{i_m}$ respectively. We denote the corresponding hyperbolic polygon by $P_f$.
\item For each vertex $v\in V$, we take the angle at $v$ to be the sum of the measures of the angles of those $P_f$'s with $v$ as a vertex. 
\end{enumerate}

    

\begin{figure}[ht]
{
\centering
\includegraphics[height=6cm]{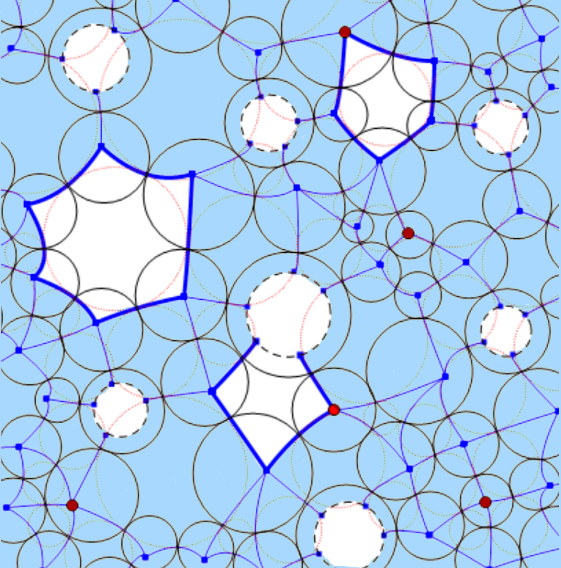} 
\caption{A portion of a generalized hyperbolic conical circle packing on a surface with boundary.}
\label{Fig4}

}
\end{figure}

Figure \ref{Fig4} shows a portion of a generalized hyperbolic conical circle packing on a surface with boundary. Pairs of concentric circles (one solid circle and one dotted circle) represent hypercycles. The red points on the figure represent the centers of horocycles and hence the circles with red points inside represent horocycles. The blue points represent the centers of hyperbolic circles. By gluing the hyperbolic polygons along their adjacent edges, a hyperbolic structure associated with the graph $G_{\mathcal{D}}$ is introduced on $S_{g, n}$, which is uniquely determined by $\textbf{k}$ up to isometry. There are possible singularities at some vertices. 

\begin{definition}[Generalized hyperbolic conical circle packing metric]\label{def-2.5}
Let $\{C_v\}_{v\in V}$ be a generalized hyperbolic conical circle packing associated with $\mathcal{D}$. One can construct a hyperbolic surface $\tilde{S}=\tilde{S}(\textbf{k})$, with possible singularities at vertices, by gluing the hyperbolic polygons of the generalized circle configurations associated with the faces along their adjacent edges. The corresponding metric is called a generalized hyperbolic conical circle packing metric. 
\end{definition}

\subsection{Generalized star}
\begin{figure}[ht]
{
\centering
\includegraphics[height=4cm]{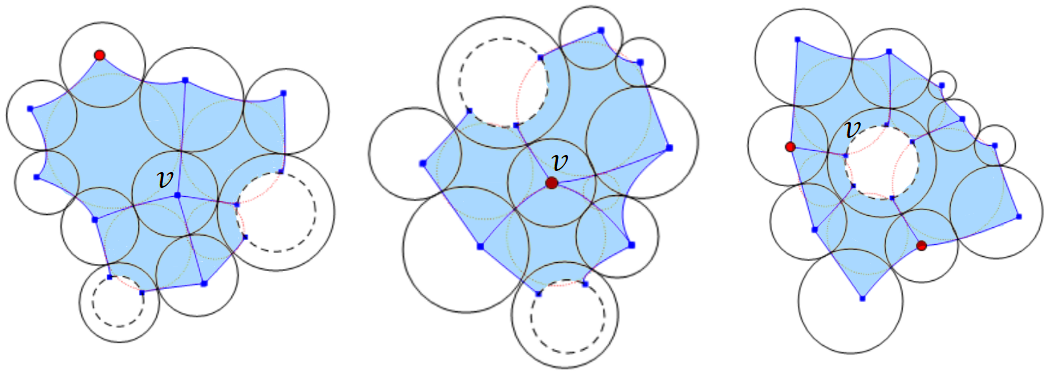} 
\caption{The generalized star at a vertex v.}
\label{Fig5}

}
\end{figure}
Given an interior vertex $v$, there is an ordered chain of faces $\{f_1,\cdots,f_n\}$ surrounding it. Let $f_{n+1}=f_1$. All faces in the chain have $v$ as a vertex and $f_i$ is contiguous to $f_{i+1}$ for each $1\le i\le n$. Associated with each chain of faces, there is a corresponding chain of hyperbolic polygons $\{P_1,\cdots,P_n\}$ for a generalized hyperbolic conical circle packing underlying the graph $G$. The union of the hyperbolic polygons of the chain around a vertex $v$ is called the generalized star at the vertex $v$. The vertex $v$ is possibly the center of a hyperbolic circle, horocycle or hypercycle. Keep in mind that in the case of a hypercycle, we view its axis as its center. Three different examples of generalized stars are shown in Figure \ref{Fig5}.

\section{Local variation formulas and monotonicity}

In this section, we develop several lemmas which are applied to prove our theorems. These lemmas show how the hyperbolic structures are affected when the geodesic curvatures are changed. 

\begin{figure}[ht]
{
\centering
\includegraphics[height=2cm]{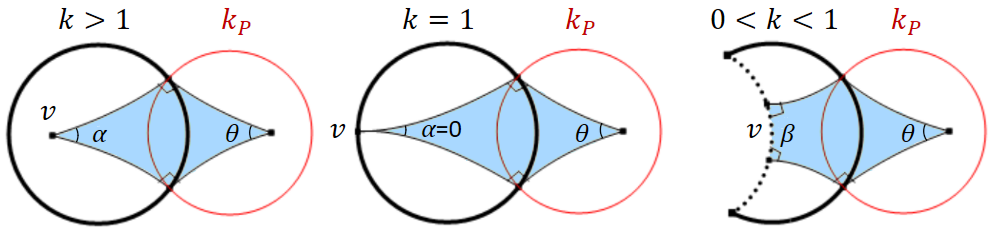} 
\caption{The quadrilateral.}
\label{Fig6}
}
\end{figure}

\subsection{Dual circle and partial derivatives}
\begin{lemma}\label{lem-3.1}
Let $k$ and  $k_P$ be the geodesic curvatures of two generalized circles $C$ and $C_P$ that intersect perpendicularly, $k\in (0,+\infty)$ and $k_P\in(1,+\infty)$. Connecting the centers of the generalized circles with the intersection points by geodesics, we obtain a generalized quadrilateral (as shown in Figure \ref{Fig6}). Denote by $\theta$ the angle of the intersection part viewed from the center of $C_P$, by $\alpha$ the angle of the intersection part viewed from the center of $C$ when $C$ is a circle or horocycle, and by $\beta$ the portion on the axis of $C$ as shown in Figure \ref{Fig6} when $C$ is a hypercycle.

Then $\theta(k,k_P)$ is a differentiable function of $k\in(0,+\infty)$ and $k_P\in(1,+\infty)$; $\alpha(k,k_P)$ is a differentiable function of $k\in(1,+\infty)$ and $k_P\in(1,+\infty)$; $\beta(k,k_P)$ is a differentiable function of $k\in(0,1)$ and $k_P\in(1,+\infty)$. Moreover,
\begin{equation}\label{equa-5}
\begin{aligned}
(a) \;\;\;\;\;\;\;\; & \frac{\partial \theta}{\partial k_P} =\frac{2 k_P k}{\sqrt{k_P^2-1}\left(k^2+k_P^2-1\right)}>0, \\
(b) \;\;\;\;\;\;\;\; &\frac{\partial \theta}{\partial k} 
=-\frac{2 \sqrt{k_P^2-1}}{k^2+k_P^2-1}<0,\\
(c) \;\;\;\;\;\;\;\; &\frac{\partial \alpha}{\partial k_P} =
-\frac{2 \sqrt{k^2-1}}{k_P^2+k^2-1}< 0 \text{ when }k\in(1,+\infty),\\
(d) \;\;\;\;\;\;\;\; &\frac{\partial \beta}{\partial k_P} =
-\frac{2 \sqrt{1-k^2}}{k_P^2+k^2-1}< 0 \text{ when }k\in (0,1).
\end{aligned}
\end{equation}

\end{lemma}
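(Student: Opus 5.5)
The plan is to derive closed-form expressions for $\theta$, $\alpha$ and $\beta$ from right-angled hyperbolic trigonometry, and then differentiate them directly. Let $O$ be the center of $C_P$ and let $X$ be one of the two intersection points. Because $C\perp C_P$, the normal to each circle at $X$ is tangent to the other. Hence the radial geodesic $OX$ meets the normal geodesic of $C$ at $X$ at a right angle. The quadrilateral of Figure \ref{Fig6} is symmetric under reflection in the geodesic through $O$ and the center (or the foot on the axis) of $C$. So it suffices to work in one half, where the angle at $O$ is $\theta/2$ and the other angle is $\alpha/2$ (or the axis portion is $\beta/2$). Throughout, $r_P=\operatorname{arccoth}k_P$, so $\sinh r_P=1/\sqrt{k_P^2-1}$ and $\tanh r_P=1/k_P$, and $r=r(k)$ is given by (\ref{equa-4}).

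\emph{Circle case $k>1$.} The half is a right triangle $OXc$ with right angle at $X$ and legs $OX=r_P$, $Xc=r$. The standard relation $\tan(\text{angle})=\tanh(\text{opposite leg})/\sinh(\text{adjacent leg})$ gives
\[
\tan\tfrac{\theta}{2}=\frac{\tanh r}{\sinh r_P}=\frac{\sqrt{k_P^2-1}}{k},\qquad
\tan\tfrac{\alpha}{2}=\frac{\tanh r_P}{\sinh r}=\frac{\sqrt{k^2-1}}{k_P},
\]
using $\tanh r=1/k$ and $\sinh r=1/\sqrt{k^2-1}$.

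\emph{Horocycle case $k=1$.} The center of $C$ is ideal and the triangle $OXc$ has an ideal vertex. The first formula persists, either by taking the limit $r\to\infty$ in a smooth family or directly from the ideal-vertex relation $\tan(\theta/2)=1/\sinh r_P$.

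\emph{Hypercycle case $0<k<1$.} Let $Y$ be the foot of $X$ on the axis and $M$ the foot of $O$ on the axis. Then $OXYM$ is a Lambert quadrilateral with right angles at $X$, $Y$, $M$, acute angle $\theta/2$ at $O$, and sides $OX=r_P$, $XY=r$, $YM=\beta/2$. I will invoke the Lambert quadrilateral identities, which I expect to give
\[
\tan\tfrac{\theta}{2}=\frac{1}{\sinh r_P\tanh r}=\frac{\sqrt{k_P^2-1}}{k},\qquad
\tanh\tfrac{\beta}{2}=\frac{\tanh r_P}{\cosh r}=\frac{\sqrt{1-k^2}}{k_P},
\]
using $\tanh r=k$ and $\cosh r=1/\sqrt{1-k^2}$. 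In particular the single formula $\theta=2\arctan\bigl(\sqrt{k_P^2-1}/k\bigr)$ holds for all $k\in(0,+\infty)$.

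Differentiability then follows from smoothness of $\arctan$ and $\operatorname{arctanh}$ on the relevant ranges. The derivative formulas are routine. Put $s=\sqrt{k_P^2-1}$, so that $ds/dk_P=k_P/s$. Then
\[
\partial_k\theta=\frac{-2s}{k^2+s^2},\qquad
\partial_{k_P}\theta=\frac{2kk_P}{s\,(k^2+s^2)},
\]
and $k^2+s^2=k^2+k_P^2-1$; this gives (a) and (b). For (c), put $t=\sqrt{k^2-1}$. Then $\partial_{k_P}\bigl[2\arctan(t/k_P)\bigr]=-2t/(k_P^2+t^2)$, and $k_P^2+t^2=k_P^2+k^2-1$. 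For (d), put $u=\sqrt{1-k^2}$. Then $\partial_{k_P}\bigl[2\operatorname{arctanh}(u/k_P)\bigr]=-2u/(k_P^2-u^2)$, and $k_P^2-u^2=k_P^2+k^2-1$. All denominators are positive since $k_P>1$, which gives the stated signs. Note also that $u/k_P<1$, so $\operatorname{arctanh}$ is well defined in (d).

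The main obstacle is the geometric set-up in the hypercycle case. There I must check carefully that the angle at $X$ is really right and that the Lambert quadrilateral is labelled correctly, so that the right identities are applied to the right sides. I would verify these identities in the hyperboloid model, where they become a short dot-product computation. As a cross-check, the resulting $\theta$-formula agrees with the circle case, so $\theta(k,k_P)$ is given by one expression across $k<1$, $k=1$ and $k>1$.
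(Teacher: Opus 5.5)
Your proposal is correct and follows essentially the paper's route. Your relations $\tan(\alpha/2)=\sqrt{k^2-1}/k_P$ and $\tanh(\beta/2)=\sqrt{1-k^2}/k_P$ are exactly the paper's $\cot(\alpha/2)=\coth r_P\sinh r$ and $\coth(\beta/2)=\coth r_P\cosh r$; the paper differentiates these implicitly, whereas you invert them with $\arctan$ and $\operatorname{arctanh}$ and differentiate explicitly. The only real difference is in (a)--(b): the paper cites Lemma~2.1 of its reference, while you derive $\theta=2\arctan\bigl(\sqrt{k_P^2-1}/k\bigr)$ yourself (the same relation appears in the paper's appendix). Because that single expression holds for all $k>0$, it gives differentiability across $k=1$ more transparently than the paper's appeal to continuity of the partials.
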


\begin{proof}
    This lemma is deduced from the four-part formula on a quadrilateral. The first two expressions are obtained in Lemma 2.1 of \cite{MR4842734}. Since the partial differences are continuous, the function $\theta(k,k_P)$ is differentiable. Here, we focus on proving the last two expressions.

\noindent\textbf{Case one:} $k\in(1,+\infty)$. Then $k=\coth r$.

By the law of the hyperbolic triangle, we obtain
\begin{equation}\label{equa-6}
\cot \frac{\alpha}{2}=\coth r_P \sinh r=\frac{k_P}{\sqrt{k^2-1}}
\end{equation}
Therefore,
$$
\alpha(k,k_P)=2\text{arccot}\frac{k_P}{\sqrt{k^2-1}}, \text{ where } k\in(1,+\infty).
$$
Differentiating the left and right sides of the equation (\ref{equa-6}) with respect to $k_P$, we obtain
$$
 -\frac{1}{2 \sin ^2 \frac{\alpha}{2}} \frac{\partial \alpha}{\partial k_P}=\frac{1}{\sqrt{k^2-1}}.
$$
Thus,
$$
\frac{\partial\alpha}{\partial k_P}=-\frac{2 \sqrt{k^2-1}}{k_P^2+k^2-1}<0,\text{ where } k\in (1,+\infty).
$$

\noindent\textbf{Case two:} $k\in(0,1)$. Then $k=\tanh r$.

By the formula of a hyperbolic pentagon,
\begin{equation}\label{equa-7}
\coth \frac{\beta}{2}=\coth{r_P}\cdot \cosh r=\frac{k_P}{\sqrt{1-k^2}}.
\end{equation}
Therefore,
$$
\beta(k,k_P)=2\text{arccot}h\frac{k_P}{\sqrt{1-k^2}},\text{ where } k\in(0,1).
$$
Differentiating the left and right sides of the equation (\ref{equa-7}) with respect to $k_P$, we obtain
$$
 -\frac{1}{2 \sinh ^2 \frac{\beta}{2}} \frac{\partial \beta}{\partial k_P}=\frac{1}{\sqrt{1-k^2}}.
$$
Thus,
$$
\frac{\partial \beta}{\partial k_P} =
-\frac{2\sqrt{1-k^2}}{k_P^2+k^2-1}<0,\text{ where } k\in(0,1).
$$
\end{proof}

\subsection{Facewise monotonicity of interstice curvature and area}

\begin{lemma}[\cite{MR4842734}]\label{lem-3.2} Let $\left\{C_i\right\}_{i=1}^n$ be a generalized circle configuration and denote by $k_i$ the geodesic curvature of $C_i$, where $k_i \in \mathbb{R}_{+}, i=1, \cdots, n$. Denote by $k_P$ ($>1$) the geodesic curvature of the circle $C_P$ perpendicular to all $C_i$. Then $k_P$ is a $C^1$ continuous function of $\left(k_1, \cdots, k_n\right)$, and $k_P$ is strictly increasing on $k_i \text{ for each }i=1,\cdots,n$.
\end{lemma}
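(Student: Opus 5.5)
The plan is to write $k_P$ as the solution of a single scalar equation and then apply the implicit function theorem. Condition (H4) says the dual circle $C_P$ passes through all the tangent points $T_i$ and meets every $C_i$ perpendicularly. Viewed from the center of $C_P$, the tangent points $T_{i-1}$ and $T_i$ lie on $C_i$, and $C_i$ meets $C_P$ orthogonally there. So the arc of $C_P$ between $T_{i-1}$ and $T_i$ subtends exactly the angle $\theta(k_i,k_P)$ of Lemma \ref{lem-3.1}, taken in the quadrilateral of Figure \ref{Fig6}. Going once around the center of $C_P$ gives the closing condition
\begin{equation*}
\Phi(k_1,\dots,k_n,k_P):=\sum_{i=1}^n \theta(k_i,k_P)=2\pi .
\end{equation*}
Lemma \ref{lem2.5} supplies a configuration, and hence a dual circle with some $k_P>1$ satisfying this equation. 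I will first check that the solution is unique. By Lemma \ref{lem-3.1}(a), $\partial\theta/\partial k_P>0$, so $\Phi$ is strictly increasing in $k_P$, and therefore $k_P$ is the unique root on $(1,+\infty)$ for each fixed $\mathbf{k}$.

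The next step is regularity. Both $\theta$ and its partial derivatives (a) and (b) are given by explicit formulas that are continuous in $(k,k_P)\in(0,+\infty)\times(1,+\infty)$, so $\Phi$ is $C^1$. Because $\partial\Phi/\partial k_P=\sum_i\partial\theta/\partial k_P>0$, the implicit function theorem shows that $k_P=k_P(k_1,\dots,k_n)$ is a $C^1$ function near every point. These local functions agree with the globally defined unique root, so they patch together into a single $C^1$ function on $(\mathbb{R}_+)^n$.

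Monotonicity then follows from differentiating $\Phi\equiv 2\pi$ implicitly:
\begin{equation*}
\frac{\partial k_P}{\partial k_i}=-\frac{\partial\theta(k_i,k_P)/\partial k}{\sum_{j}\partial\theta(k_j,k_P)/\partial k_P}.
\end{equation*}
By (b) the numerator $\partial\theta/\partial k$ is negative, and by (a) the denominator is positive. Hence $\partial k_P/\partial k_i>0$, so $k_P$ is strictly increasing in each $k_i$.

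I expect the main obstacle to be justifying the closing equation uniformly across the three types of $C_i$ (circle, horocycle, hypercycle). Specifically, I need to check that the angle at the center of $C_P$ is always the quantity $\theta(k_i,k_P)$ from Lemma \ref{lem-3.1}, and that these angles fill out exactly $2\pi$ without overlap. The convexity and simplicity conditions (H2) and (H3) should guarantee this: the tangent points appear in cyclic order on $C_P$, and the center of $C_P$ lies inside the region they bound. A further point is the horocycle case $k_i=1$, where the center of $C_i$ is ideal. There I need to confirm that formulas (a) and (b) remain valid, which can be read off directly from their explicit form since neither formula is singular at $k=1$.
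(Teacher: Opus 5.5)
Your proposal is correct and follows essentially the same route as the paper: you use the closing condition $\sum_{i=1}^n\theta_i=2\pi$ around the center of $C_P$, then differentiate implicitly with Lemma \ref{lem-3.1}(a),(b), which gives exactly the formula (\ref{equa-10}) for $\partial k_P/\partial k_i>0$. Your extra steps (uniqueness of the root from $\partial\Phi/\partial k_P>0$, and patching the local implicit-function solutions into a global $C^1$ function) only make explicit what the paper leaves implicit; for the horocycle concern, note that the single formula $\cot(\theta/2)=k/\sqrt{k_P^2-1}$ from the appendix covers all three circle types.
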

\begin{proof} Denote by $C_P^i$ the part of $C_P$ that lies in the circle $C_i$ and by $\theta_i$ the angle of $C_P^i$ viewed from the center of $C_P$, $i=1, 2, \cdots, n$ (see Figure \ref{Fig7}). Then 
\begin{equation}\label{sum of angles viewed from the center of C_P}
\sum_{i=1}^n \theta_i=2\pi.
    \end{equation}
By Lemma \ref{lem-3.1}, $\theta_i$ is a smooth function of $k_i$ and $k_P$ for each $1\le i\le n$. 
Applying the implicit differentiation and Lemma \ref{lem-3.1} (a) and (b), we obtain 
\begin{equation}\label{equa-10}
\frac{\partial k_P}{\partial k_i}=-\frac{\frac{\partial \theta_i}{\partial k_i}}{\sum_{j=1}^n\frac{\partial \theta_j}{\partial k_P}}=\frac{ \frac{\sqrt{k_P^2-1}}{k_P^2+k_i^2-1}}{\sum_{j=1}^n \frac{k_P k_j}{\left(k_P^2+k_j^2-1\right) \sqrt{k_P^2-1}}}>0.
\end{equation} 
 Thus, $k_P$ increases strictly on $k_i \text{ for each }i=1,\cdots,n$.
\end{proof}
\begin{figure}[ht]
{
\centering
\includegraphics[height=4cm]{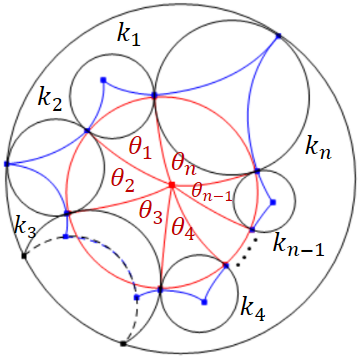} 
\caption{Dual circle $k_P$ and the angles $\theta_i$, $i=1, 2, \cdots, n$, of the edges of $\Omega_f$ viewed from the center of $k_P$.}
\label{Fig7}

}
\end{figure}
\begin{remark}
Using the equation (\ref{sum of angles viewed from the center of C_P}) and the expression of $\theta_i$ in terms of $k_i$ and $k_P$ for each $1\le i\le n$, one can derive that $k_P$ is a positive real root of the following equation: 
\begin{equation}\label{equ-6}
\sum_{m=0}^{[\frac{n-1}2{}]}(-1)^mS_{n-1-2m}(k_P^{2}-1)^{m}=0,\quad n\geq 3,
\end{equation}
where 
$$S_0=1 \text{ and }S_{t}=\sum_{1\leq i_1<\cdots <i_{t}\leq n} k_{i_1}\cdots k_{i_{t}} \text{ for } t=1,\cdots,n.$$
 We show how the equation (\ref{equ-6}) is obtained in Appendix \ref{sec-7.1}.
\end{remark}

Let $\mathcal{P}$ be a generalized hyperbolic conical circle packing on the surface $\mathcal{S}$ with boundary. Then $\mathcal{P}$ induces a finite polygonal decomposition $\mathcal{D}$ on $\mathcal{S}$, which induces a simple graph $G_{\mathcal{D}}=\{V_{\mathcal{D}},E_{\mathcal{D}},F_{\mathcal{D}}\}$ on $\mathcal{S}$, where $V_{\mathcal{D}},E_{\mathcal{D}},F_{\mathcal{D}}$ represent the vertices, edges and faces of the graph, respectively.

\begin{figure}[ht]
{
\centering
\includegraphics[height=4cm]{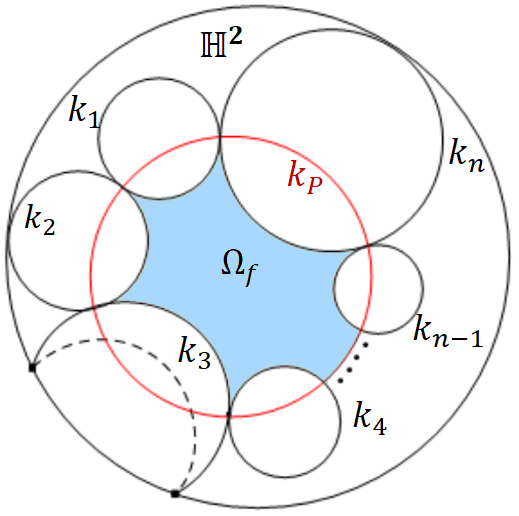} 
\caption{The interstice $\Omega_f$ corresponding to the face $f$.}
\label{Fig8}

}
\end{figure}

\begin{lemma}[Partial derivatives, \cite{MR4825314}]\label{lem-3.3-1}
Let $\{C_1, \cdots, C_n\}$ be a generalized circle configuration with geodesic curvatures equal to $k_1, k_2,\cdots, k_n \in \mathbb{R}_{+}$ respectively, and let $\Omega_f$ be the interstice bounded by this circle configuration (which is a hyperbolic polygon as shown in Figure \ref{Fig8}). Denote by $T_{i, P}$ the total geodesic curvature of the edge of $\Omega_f$ lying on $C_i$, where $1\le i\le n$. Then 
\begin{equation}\label{partial derivatives}
\begin{aligned}
(a) \;\;\;\;\;\;\;\; &
\frac{\partial\left(\sum_{j=1}^n T_{j, P}\right)}{\partial k_i}>0, \quad 1 \leq i \leq n.\\
(b) \;\;\;\;\;\;\;\; & \frac{\partial T_{j, P}}{\partial k_i}<0, \quad \forall \;i \neq j. \\
(c) \;\;\;\;\;\;\;\; & \frac{\partial T_{i, P}}{\partial k_i}>0, 
\quad 1 \leq i \leq n.\\
(d) \;\;\;\;\;\;\;\; &
\frac{\partial \operatorname{Area}\left(\Omega_f\right)}{\partial k_i}<0, \quad 1 \leq i \leq n .
\end{aligned}
\end{equation}
\end{lemma}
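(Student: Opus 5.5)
We plan to reduce everything to the dual circle $C_P$ and to the one-variable formulas of Lemma \ref{lem-3.1}. The edge of $\Omega_f$ on $C_i$ is the arc of $C_i$ cut off by the two tangent points $T_{i-1},T_i$, and both tangent points lie on $C_P$. So this arc is exactly the arc of $C_i$ in the quadrilateral of Figure \ref{Fig6} formed by $C_i$ and $C_P$ (doubled by symmetry). Using Table \ref{tab-1}, we will write $T_{i,P}$ as a function of the single pair $(k_i,k_P)$. If $k_i>1$, then $T_{i,P}=\alpha_i\cosh r_i=\alpha_i k_i/\sqrt{k_i^2-1}$ with $\alpha_i=2\,\mathrm{arccot}\big(k_P/\sqrt{k_i^2-1}\big)$. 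If $k_i<1$, then $T_{i,P}=\beta_i\sinh r_i=\beta_i k_i/\sqrt{1-k_i^2}$ with $\beta_i=2\,\mathrm{arccoth}\big(k_P/\sqrt{1-k_i^2}\big)$. In the horocycle case we use the limiting formula $T_{i,P}=\ell_i=2/k_P$, which is what both expressions tend to as $k_i\to1$. Thus $T_{i,P}=g(k_i,k_P)$ for a single function $g$, and the whole configuration dependence enters only through $k_P=k_P(k_1,\dots,k_n)$ from Lemma \ref{lem-3.2}.

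\textbf{Step 1: part (b).} For $j\neq i$ the chain rule gives $\partial T_{j,P}/\partial k_i=(\partial g/\partial k_P)(k_j,k_P)\cdot\partial k_P/\partial k_i$. Lemma \ref{lem-3.1}(c),(d) show $\partial g/\partial k_P<0$ in the circle and hypercycle cases; we check the horocycle case directly from $2/k_P$. Lemma \ref{lem-3.2} gives $\partial k_P/\partial k_i>0$, so the product is negative.

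\textbf{Step 2: an explicit formula for $g$.} Differentiating the closed forms, we expect a uniform identity. For $k>1$, $\partial_{k_P}g=-2k/(k_P^2+k^2-1)$, and the same formula holds for $k\le1$. For the other partial, $\partial_k g=2k_P/(k_P^2+k^2-1)+\varphi(k,k_P)$, where $\varphi$ is the derivative of the prefactor $k/\sqrt{\pm(k^2-1)}$ times the angle. I would first verify that these expressions extend continuously across $k=1$.

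\textbf{Step 3: part (a).} We have
\[
\frac{\partial\sum_j T_{j,P}}{\partial k_i}=\partial_k g(k_i,k_P)+\Big(\sum_j\partial_{k_P}g(k_j,k_P)\Big)\frac{\partial k_P}{\partial k_i}.
\]
Insert (\ref{equa-10}) for $\partial k_P/\partial k_i$ and note that
\[
\sum_j\partial_{k_P}g(k_j,k_P)=-\frac{2\sqrt{k_P^2-1}}{k_P}\sum_j\frac{\partial\theta_j}{\partial k_P}.
\]
The denominator then cancels, and the correction term becomes exactly $-2\sqrt{k_P^2-1}\cdot\sqrt{k_P^2-1}/\big(k_P(k_P^2+k_i^2-1)\big)$. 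Combined with $2k_P/(k_P^2+k_i^2-1)$, this leaves $2/\big(k_P(k_P^2+k_i^2-1)\big)+\varphi$. The result is then manifestly positive provided $\varphi\ge0$, or at least bounded below suitably.

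\textbf{Step 4: parts (c) and (d).} Part (c) follows by writing $\partial T_{i,P}/\partial k_i=\partial_k g+\partial_{k_P}g\,\partial k_P/\partial k_i$ and bounding $\partial k_P/\partial k_i$ by the single $j=i$ term of the denominator in (\ref{equa-10}); that single term dominates since the other terms are positive. For (d), Gauss--Bonnet on $\Omega_f$ gives $\mathrm{Area}(\Omega_f)=2\pi-\sum_j T_{j,P}$, because the interior angles at the tangent cusps are $0$, so the exterior angles are $\pi$ each. Hence (d) follows from (a).

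\textbf{Main obstacle.} I expect the sign of $\varphi$ in Steps 2 and 3 to be the hardest part. In the circle case, $\frac{d}{dk}\frac{k}{\sqrt{k^2-1}}<0$ while the angle part helps, so this needs a genuine inequality such as $\mathrm{arccot}(x)\le 1/x$, and $\mathrm{arccoth}(x)\ge1/x$ in the hypercycle case. I would try to prove these by elementary one-variable estimates, treating the three regimes separately and matching them at $k=1$.
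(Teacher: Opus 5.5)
The paper gives no proof of this lemma; it is imported from \cite{MR4825314}, so there is no internal argument to compare with. Your route is a sound, self-contained proof: write $T_{i,P}=g(k_i,k_P)$ and let all coupling pass through $k_P(\mathbf{k})$. Part (b) is correct as you state it, and the inequalities you single out in your last paragraph are exactly the ones needed. Part (c) is quicker than your bound: $\partial_{k_i}T_{i,P}=\partial_{k_i}\sum_jT_{j,P}-\sum_{j\neq i}\partial_{k_i}T_{j,P}>0$ follows directly from (a) and (b). Several computations in Steps 2--4 are off, however, and should be fixed.

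First, $\varphi$ is not ``the derivative of the prefactor times the angle.'' For $k>1$, the prefactor times $\partial_k$ of the angle equals $2k^2k_P/\big((k^2-1)(k_P^2+k^2-1)\big)$, not $2k_P/(k_P^2+k^2-1)$, and the excess belongs in $\varphi$. If you take $\varphi=-2\operatorname{arccot}(x)/(k^2-1)^{3/2}$ literally, you are led to the inequality $\operatorname{arccot}x<1/\big(x(1+x^2)\big)$, which is false for large $x$. The correct $\varphi$ is strictly negative in every regime, so ``$\varphi\ge 0$'' never holds. After the cancellation, the derivative in (a) is exactly
\[
\frac{\partial\sum_jT_{j,P}}{\partial k_i}=
\begin{cases}
\dfrac{2}{s^3}\Big(\dfrac1x-\operatorname{arccot}x\Big), & k_i>1,\ s=\sqrt{k_i^2-1},\ x=k_P/s,\\[1.2em]
\dfrac{2}{s^3}\Big(\operatorname{arccoth}y-\dfrac1y\Big), & k_i<1,\ s=\sqrt{1-k_i^2},\ y=k_P/s.
\end{cases}
\]
Equivalently, it equals $(2/k_P-\ell_i)/(k_i^2-1)$, where $\ell_i$ is the length of the edge of $\Omega_f$ on $C_i$. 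Positivity is then precisely $\arctan t<t$ and $\operatorname{arctanh}t>t$ for $t>0$; you need these strict versions, not $\le$ and $\ge$. Both branches tend to $2/(3k_P^3)$ as $k_i\to 1$.

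Differentiability across $k_i=1$ is cleanest from $g(k,k_P)=2k\,F(k^2-1)$, where $F(u)=\sum_{m\ge0}(-u)^m/\big((2m+1)k_P^{2m+1}\big)$ near $u=0$. This $F$ equals $\arctan(\sqrt u/k_P)/\sqrt u$ for $u>0$ and $\operatorname{arctanh}(\sqrt{-u}/k_P)/\sqrt{-u}$ for $u<0$, so one power series covers both branches.

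Second, the identity in Step 3 should read $\sum_j\partial_{k_P}g(k_j,k_P)=-\frac{\sqrt{k_P^2-1}}{k_P}\sum_j\partial_{k_P}\theta_j$. Your factor $2$ is right only against the halved denominator appearing in (\ref{equa-10}). Your resulting correction term $-2(k_P^2-1)/\big(k_P(k_P^2+k_i^2-1)\big)$ is nevertheless correct.

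Third, the Gauss--Bonnet constant is wrong. With $n$ cusps of exterior angle $\pi$, and boundary arcs of curvature $-k_j$ relative to $\Omega_f$, one gets $\operatorname{Area}(\Omega_f)=(n-2)\pi-\sum_jT_{j,P}$, as used in the proof of Theorem \ref{theorem-5.1}, not $2\pi-\sum_jT_{j,P}$. Since only the $\mathbf{k}$-derivative enters, (d) still follows from (a).
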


 For a vertex $v\in V^{\circ}$, denote by $P_i,i=1,\cdots,n$ the induced hyperbolic polygons by the packing $\mathcal{P}$ that take $v$ as a vertex, and denote by $C_{P_i}$ the corresponding dual circles contained in $P_i$ for $i=1,\cdots,n$. All the vertices of all $P_i$'s except $v$ are called the neighboring vertices of $v$. Note that in the case of a hypercycle, all vertices of $P_i$'s on its axis are viewed as the center of that hypercycle or the vertex of the graph corresponding to that hypercycle.  
 
 Figure \ref{Fig9} shows an example for the neighboring vertices of a vertex and the dual circles surrounding this vertex.  
\begin{figure}[ht]
{
\centering
\includegraphics[height=4cm]{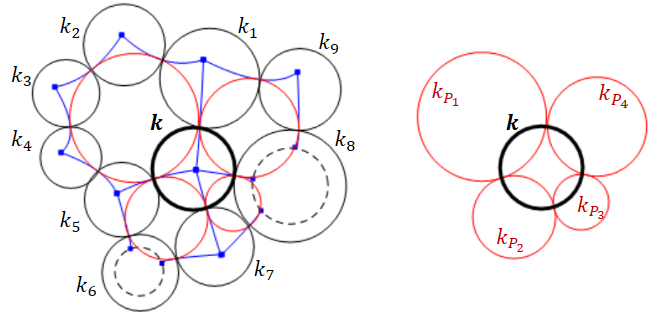} 
\caption{Neighboring vertices and surrounding dual circles for a vertex.}
\label{Fig9}

}
\end{figure}

From the previous lemma, we obtain the following monotonicity lemma.
\begin{lemma}\label{lem-3.3}
Let $v$ be an interior vertex of a generalized circle packing $\mathcal{P}$ with neighboring faces $f_1, \ldots f_n$. 
Let $k$ be the geodesic curvature of the generalized circle centered at $v$ and let $k_j$, $j=1,\cdots,q$, be the geodesic curvatures of the neighboring vertices $v_j$ of $v$, where by a neighboring vertex of $v$ we mean a vertex, except $v$, of a face with $v$ as a vertex. Denote by $\Omega_{f_i}$ the interstice corresponding to the face $f_i$. The total geodesic curvature $T$ of the circle centered at $v$ is a continuous function of $(k,k_1 \cdots k_q)$, which we denote by 
$$
T=T\left(k,k_1 \cdots k_q\right).
$$
Then

(a) $T$ is strictly increasing with $k$;

(b) $T$ is strictly decreasing with each of $k_{j}, j=1 ,\cdots,q$;

(c) $\sum_{i=1}^n Area(\Omega_{f_i})$ is strictly decreasing with each of $k$ and $k_{j},j= 1,\cdots,q$. 
\end{lemma}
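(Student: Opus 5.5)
The total geodesic curvature splits face by face, and the plan is to read off the signs from Lemma \ref{lem-3.3-1} on each face. For each face $f_i$ containing $v$, the generalized circle $C_v$ meets the induced polygon $P_{f_i}$ in an arc. The dual circle $C_{P_{f_i}}$ is perpendicular to $C_v$ and passes through the two tangency points of $C_v$ with its neighbours in $f_i$. So this arc is exactly the edge of the interstice $\Omega_{f_i}$ lying on $C_v$. Write $T_{v,P_{f_i}}$ for its total geodesic curvature. Then
\begin{equation*}
T(k,k_1,\cdots,k_q)=\sum_{i=1}^n T_{v,P_{f_i}}\bigl(k,(k_j)_{v_j\in V(f_i)}\bigr),
\end{equation*}
where each summand depends only on $k$ and the curvatures of the vertices of $f_i$.

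\textbf{Continuity.} By Lemma \ref{lem2.5} and Lemma \ref{lem-3.2}, $k_{P_{f_i}}$ is a $C^1$ function of the curvatures on $f_i$. Table \ref{tab-1} writes $T_{v,P_{f_i}}$ through the inner angle $\alpha$ or $\beta$ times $\cosh r$ or $\sinh r$. These inner angles are explicit in $(k,k_{P_{f_i}})$ by the formulas in the proof of Lemma \ref{lem-3.1}, with a separate length formula in the horocycle case $k=1$. Hence each summand is $C^1$ in the interior of each regime. I would check continuity across $k=1$ directly from the closed forms: both $\alpha\cosh r$ and $\beta\sinh r$ should tend to the horocyclic arc length as $k\to1$. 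This gives continuity of $T$.

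\textbf{Parts (a) and (b).} By Lemma \ref{lem-3.3-1}(c), $\partial T_{v,P_{f_i}}/\partial k>0$ for each $i$. Summing over $i$ gives (a). For (b), a neighbour $v_j$ may belong to several faces around $v$. In every such face $f_i$, Lemma \ref{lem-3.3-1}(b) with $j\neq v$ gives $\partial T_{v,P_{f_i}}/\partial k_j<0$. In faces not containing $v_j$ the summand does not depend on $k_j$. Summing gives strict decrease.

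\textbf{Hypercycle subtlety.} When $C_v$ is a hypercycle, its "center" is an axis and $v$ contributes two polygon vertices. One must still check that the arc of $C_v$ in $P_{f_i}$ is the interstice edge counted by Lemma \ref{lem-3.3-1}. I expect this bookkeeping to be the main point needing care. By Definition \ref{def-2.4} the configuration is the same, so Lemma \ref{lem-3.3-1} applies unchanged.

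\textbf{Part (c).} The sum $\sum_{i}\operatorname{Area}(\Omega_{f_i})$ is a sum of face-wise areas. By Lemma \ref{lem-3.3-1}(d), each $\operatorname{Area}(\Omega_{f_i})$ is strictly decreasing in every curvature of a vertex of $f_i$, and it is constant in the others. The variable $k$ appears in every face. Each $k_j$ appears in at least one face, since $v_j$ is a neighbouring vertex. Therefore the sum is strictly decreasing in each variable.

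At the horocycle value $k=1$, where the derivative formulas of Lemma \ref{lem-3.1} change regime, strict monotonicity still follows. A function that is continuous and strictly monotone on each side of a point is strictly monotone overall.
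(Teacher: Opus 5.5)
Your proposal is correct and is essentially the paper's argument. The paper gives no separate proof: it presents the lemma as an immediate consequence of Lemma \ref{lem-3.3-1}, obtained by splitting $T$ and the total interstice area face by face and summing the signs from parts (c), (b) and (d). Your additional checks of continuity across $k=1$ and of the hypercycle bookkeeping are harmless details that the paper leaves implicit.
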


\subsection{Edge-angle monotonicity for generalized convex hyperbolic polygons}

\begin{lemma}\label{lem-3.6}
Let $P$  be a convex hyperbolic polygon on the hyperbolic plane with its edges labeled by $e_1, e_2, \cdots, e_n$ in the counterclockwise order satisfying that any two adjacent edges $e_i$ and $e_{i+1}$ meet at a point $v_{i+1}$ at angle $\alpha_{i+1}$ or have a common perpendicular geodesic arc $\beta_{i+1}$ between them (called a \textbf{hyper-edge}), where $i=1, 2, \cdots, n$ and $n+1=1$. We call $v_i$ or $\beta_i$ the vertices of $P$, which we collectively denote by $I_1, I_2, \ldots, I_n$ in the counterclockwise order. We call $\alpha_i$ or $\beta_i$ the  \textbf{generalized interior angles} of $P$, and also let $\alpha_i$ (resp. $\beta_i$) denote its measure. See Figure \ref{Fig24}) for an example. 

Denote the length of $e_i$ by $\ell_i$ and assume that $\alpha_i\in (0, \pi)$ and $\beta_i>0$, where $i=1, 2, \cdots, n$. The length $l_n$ of the edge $e_n$ is uniquely determined by the lengths $l_1, \ldots, l_{n-1}$ of the other edges and the measures of the generalized interior angles $I_2, I_3, \ldots, I_{n-1}$. Consider the deformations of $P$ that preserve the lengths of the edges $e_1, e_2, \cdots, e_{n-1}$ and the nature of the generalized interior angles (that is, each vertex changes to a vertex and each hyper-edge changes a hyper-edge). Then $\ell_n$, regarded as a function of the measures of $I_2, I_3, \ldots, I_{n-1}$, is strictly increasing with respect to the measure of each $I_i$, where $i = 2, \ldots, n-1$.
\end{lemma}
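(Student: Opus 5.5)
We reduce the statement to a one-parameter deformation. Changing one measure at a time suffices: fix $i\in\{2,\dots,n-1\}$ and vary only the measure of $I_i$, keeping $\ell_1,\dots,\ell_{n-1}$ and all other generalized angles among $I_2,\dots,I_{n-1}$ fixed. The data $\ell_1,\ell_2,I_2,\ell_3,\dots,I_{n-1},\ell_{n-1}$ determine the open polygonal chain $e_1,\dots,e_{n-1}$ up to isometry. Here a hyper-edge $\beta_j$ is realized by inserting a common perpendicular segment of length $\beta_j$ meeting both $e_{j-1}$ and $e_j$ at right angles. The remaining data $I_1,I_n$ and the edge $e_n$ are then fixed by closing the chain. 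In the vertex case, $e_n$ is the geodesic segment from the start of $e_1$ to the end of $e_{n-1}$. In the hyper-edge case, one takes the common perpendiculars to the extension lines.

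We cut the chain at $I_i$ into two rigid pieces. $A$ is the chain $e_1,\dots,e_{i-1}$ together with the first endpoint $X$ (the start of $e_1$, or the relevant geodesic line if $I_1$ is a hyper-edge). $B$ is the chain $e_i,\dots,e_{n-1}$ with its last endpoint $Y$. Varying the measure of $I_i$ amounts to rotating $B$ rigidly relative to $A$: about the point $v_i$ if $I_i$ is a vertex, or, if $I_i$ is a hyper-edge, translating $B$ along the common perpendicular line of length $\beta_i$ (the hyperbolic analogue of a rotation about an ideal/ultra-ideal center). In either case $\ell_n$ is a generalized distance between the object $X$ attached to $A$ and the object $Y$ attached to $B$.

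We then prove monotonicity by the hyperbolic law of cosines in the relevant triangle, quadrilateral or pentagon. In the vertex case, let $a=d(v_i,X)$, $b=d(v_i,Y)$, and let $\phi$ be the angle at $v_i$ between the geodesics $v_iX$ and $v_iY$. By convexity of $P$, the diagonals $v_iX$ and $v_iY$ lie inside $P$, so $\phi=\alpha_i-\psi_A-\psi_B$, where $\psi_A,\psi_B$ depend only on the rigid pieces. Then $\cosh\ell_n=\cosh a\cosh b-\sinh a\sinh b\cos\phi$, which is strictly increasing in $\phi\in(0,\pi)$, hence in $\alpha_i$. When $X$ or $Y$ is a line (hyper-edge at $I_1$ or $I_n$), we use the analogous formula $\sinh(\text{dist})$, or the corresponding pentagon/hexagon formula, which has the same monotone dependence on the angle. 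In the hyper-edge case at $I_i$, we use the right-angled hexagon/pentagon relation with $\beta_i$ as the middle side: $\cosh\ell_n=-\cosh x\cosh y+\sinh x\sinh y\cosh\beta_i$ (or its pentagon variant), where $x,y$ are fixed by the rigid pieces. This is strictly increasing in $\beta_i$.

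The main obstacle is the bookkeeping of the many mixed cases, namely which of $I_1,I_i,I_n$ are vertices and which are hyper-edges. We also need to guarantee, via convexity, that the relevant angle $\phi$ stays in $(0,\pi)$ and that the diagonals lie inside $P$, so that the decomposition $\phi=\alpha_i-\psi_A-\psi_B$ holds throughout the deformation. I would handle this uniformly in the hyperboloid model: each $I_j$ becomes a timelike or spacelike vector, and $\ell_n$ is expressed through the Lorentzian inner product of the endpoint objects. The deformation is then a one-parameter elliptic (for a vertex) or hyperbolic (for a hyper-edge) Lorentz transformation applied to $B$. Differentiating the inner product gives a positive derivative exactly when $P$ is convex.
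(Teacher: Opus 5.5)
Your vertex case ($I_i$ a vertex) matches the paper's argument: the auxiliary triangle with apex $v_i$ and opposite side $e_n$, the hyperbolic cosine law, and convexity giving $\phi=\alpha_i-\psi_A-\psi_B\in(0,\pi)$ with $\psi_A,\psi_B$ fixed by the rigid pieces.

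The gap is in the case $I_i=\beta_i$. The identity $\cosh\ell_n=-\cosh x\cosh y+\sinh x\sinh y\cosh\beta_i$ is the right-angled hexagon formula. It (or its pentagon variant) is valid only when the rigid pieces are single edges perpendicular to the axis $g$ of $\beta_i$, essentially only when $n=3$. In that situation the perpendiculars from the endpoint objects $X,Y$ to $g$ land exactly at the endpoints $A_1,A_2$ of $\beta_i$. For a general chain they do not.

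Take for instance $X=v_1$ and $Y=v_n$ to be points at distances $h_1,h_2$ from $g$, with feet $H_1,H_2$ on $g$. Let $u_1,u_2$ be the signed distances from $A_1$ to $H_1$ and from $A_2$ to $H_2$. Then
\[
\cosh\ell_n=\cosh h_1\cosh h_2\cosh(\beta_i-u_1-u_2)-\sinh h_1\sinh h_2 ,
\]
so $\partial\ell_n/\partial\beta_i$ has the sign of $\sinh(\beta_i-u_1-u_2)$. This is negative whenever the feet are out of order. It is exactly the expression the paper obtains after substituting $\tanh u_j=\tanh l^j\cos\alpha^j$. So monotonicity in $\beta_i$ is not automatic, contrary to what your formula suggests.

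The missing idea is the proof that $\beta_i-u_1-u_2>0$, i.e.\ that the feet lie on $g$ in the order $A_1,H_1,H_2,A_2$. The paper derives this from convexity, as follows.
\begin{itemize}
\item The lines of $e_{i-1}$ and $e_i$ are perpendicular to $g$ at $A_1,A_2$ and are supporting lines of $P$. Hence $P$ projects orthogonally into the segment $[A_1,A_2]$.
\item The projection is monotone along the boundary arc from $A_2$ through $e_n$ back to $A_1$. An edge other than $e_{i-1},e_i$ lying on a geodesic perpendicular to $g$ would separate $A_1$ from $A_2$, so it could not support $P$.
\end{itemize}

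Your closing sentence asserts exactly this step without argument: that differentiating the Lorentzian inner product ``gives a positive derivative exactly when $P$ is convex.'' In the hyperboloid model that derivative reduces to the same ordering of feet. Equivalently, by first variation, it is the sign of the component along $e_n$ of the translation Killing field of $g$ at the endpoint of $e_n$.

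The same projection argument is also needed in your vertex case when $X$ or $Y$ is a line. There it guarantees that the foot of the perpendicular from $v_i$ to that line lies on the corresponding hyper-edge, so the relevant angle stays in $(0,\pi)$. Your phrase ``the same monotone dependence on the angle'' leaves this unjustified.
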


\begin{figure}[ht]
{
\centering
\includegraphics[height=3cm]{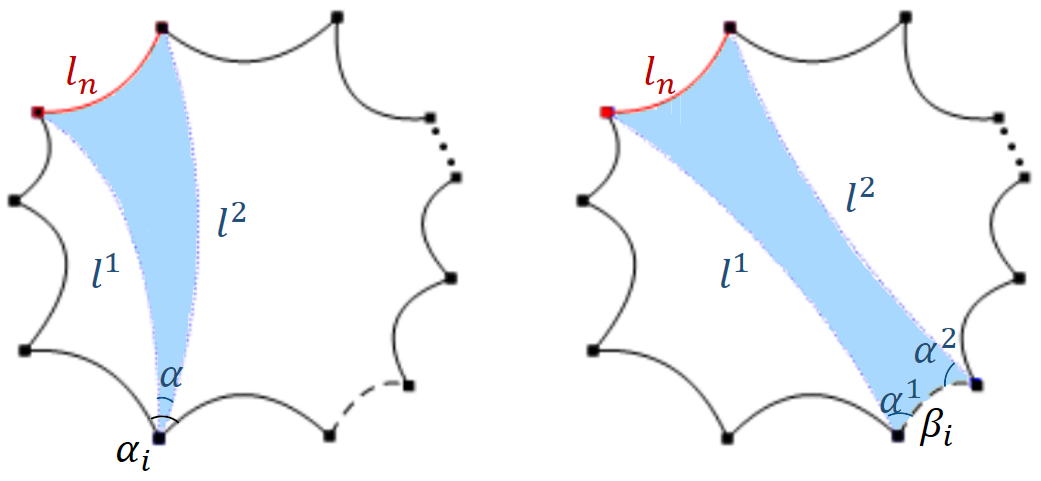} 
\caption{Decompositions of a hyperbolic polygon.}
\label{Figuu10}

}
\end{figure}
\begin{proof}

From the viewpoint of geometry, we see that for the deformations of $P$ considered in this lemma, $\ell_n$ is a function of the measures of the edges $e_1, e_2, \cdots, e_{n-1}$ and the measures of $I_2, I_3, \ldots, I_{n-1}$. The explicit relationship under which $\ell_n$ is determined by $\ell_1, \ell_2, \cdots, \ell_{n-1}$ and the measures of $I_2, I_3, \ldots, I_{n-1}$ is given in the appendix \ref{sec-7.2}.

To prove monotonicity, we draw several auxiliary geodesic lines to decompose the polygon $P$ into a union of triangles or quadrilaterals. By analyzing how the side lengths of each triangle or quadrilateral vary with \( I_i \)'s, we see how the length of \( l_n \) changes with respect to the measure of each \( I_i \).  

There are two cases to consider. 

In one case, we assume that some $I_i=\alpha_i$ is increased and the others $I_j$'s, $j\neq i$ and $2\leq j\leq n-1$, remain the same. We consider the auxiliary triangle shown on the left figure on Figure \ref{Figuu10}. By the cosine law for hyperbolic triangles, 

\begin{equation}\label{equ-10}
\cosh l_n=\cosh l^1\cosh l^2-\sinh l^1\sinh l^2\cos\alpha,
\end{equation}

Differentiating with respect to \(\alpha\), we obtain
\begin{equation}\label{equa-11}
\frac{\partial \ell_n}{\partial\alpha}
        =
    \frac{\sinh l^1\sinh l^2\sin\alpha}{\sinh\ell_n}>0.        
\end{equation}
 
 We know that $l_n$ increases with $\alpha$. Since the other $I_j$'s, $j\neq i$, are unchanged, using Lemma \ref{lem-3.6} in the two polygons on the left and right sides of the auxiliary triangle, the lengths of the edges $l^1$ and $l^2$ remain the same and $\alpha_i-\alpha$ remains the same as well. Thus, $\alpha$ increases with $\alpha_i$. Therefore, $l_n$ increases with $\alpha_i$.

In the other case, we suppose that some $I_i=\beta_i$ is increased and the others $I_j$'s, $j\neq i$ and $2\leq j\leq n-1$, are unchanged. We consider the quadrilateral shown on the right figure on Figure \ref{Figuu10}. Since the other $I_j$'s, $j\neq i$, are not changed, the lengths of $l^1$ and $l^2$ and the angles $\alpha^1$ and $\alpha^2$ are not changed. By the hyperbolic distance formula (\ref{equa-28}), we obtain
\begin{equation}\label{equ-11}
    \begin{aligned}
\cosh l_n= & \cosh \beta_i \cosh l^1 \cosh l^2 \\
& -\sinh \beta_i\left(\sinh l^1 \cosh l^2 \cos \alpha^1+\cosh l^1 \sinh l^2 \cos \alpha^2\right) \\
& +\sinh l^1 \sinh l^2\left(\cosh \beta_i \cos \alpha^1 \cos \alpha^2-\sin \alpha^1 \sin \alpha^2\right)
\end{aligned}
\end{equation}


Differentiating equation (\ref{equ-11}) with respect to \(\beta_i\), we obtain
\begin{equation}
  \begin{aligned}
 \frac{\partial l_n}{\partial\beta_i}
        =&
        \frac{\sinh\beta_i \cosh l^1 \cosh l^2  +\sinh l^1 \sinh l^2\sinh\beta_i \cos \alpha^1 \cos \alpha^2}{\sinh l_n}\\
        &-\frac{ \cosh \beta_i\left(\sinh l^1 \cosh l^2 \cos \alpha^1+\cosh l^1 \sinh l^2 \cos \alpha^2\right) }{\sinh l_n}. 
         \end{aligned} 
\end{equation}

Set $\tanh u_1=\tanh l^1\cos\alpha^1$, $\tanh u_2=\tanh l^2\cos\alpha^2$. Then
\begin{equation}
\frac{\partial l_n}{\partial\beta_i}
    =
    \frac{\cosh l^1\cosh l^2}
         {\sinh l_n\cosh u_1\cosh u_2}
    \sinh\bigl(\beta_i-u_1-u_2\bigr).
\end{equation}

Next, we prove that $\beta_i-u_1-u_2>0$ using the geometric meaning of $u_j,j=1,2$.

\begin{figure}[ht]
{
\centering
\includegraphics[height=5cm]{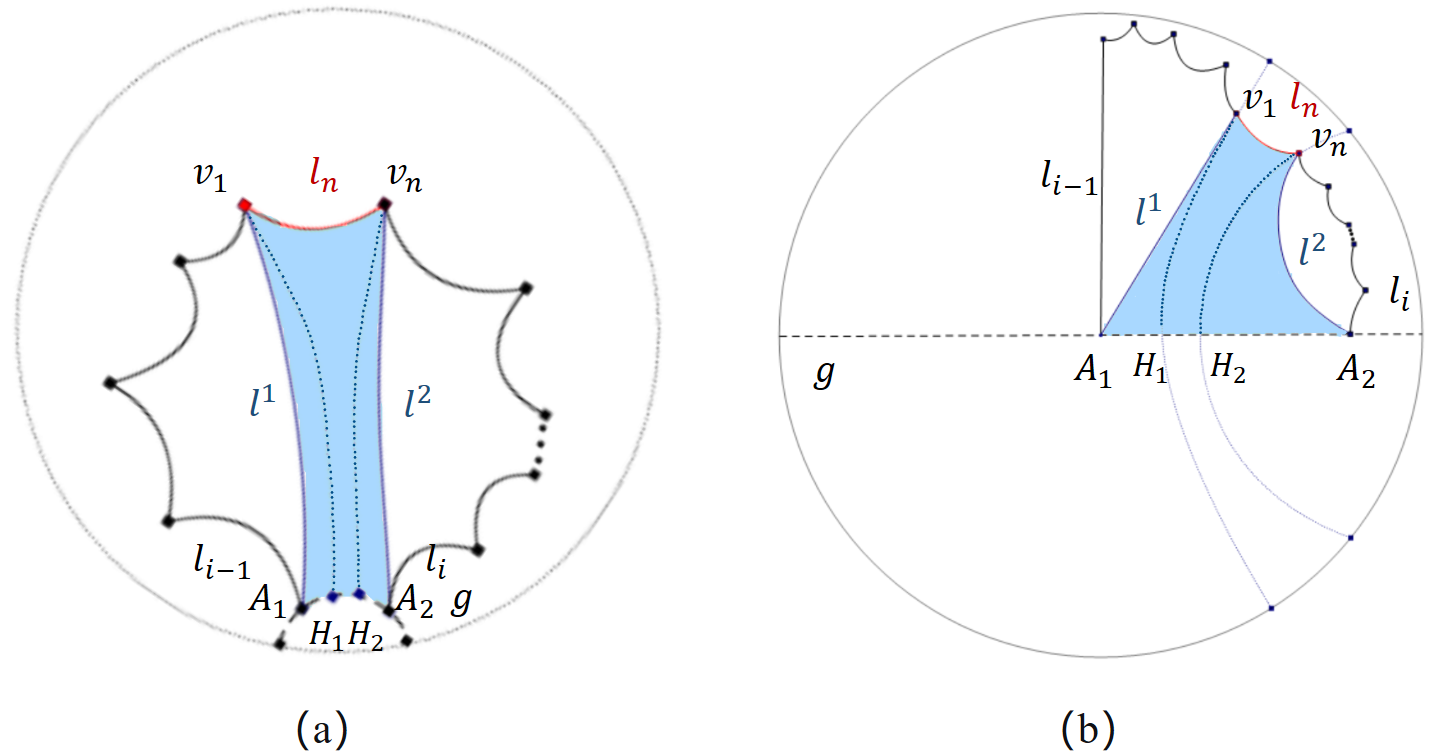} 
\caption{The polygon $P$ with transformation.}
\label{Figu11}

}
\end{figure}

Let \(A_1,A_2\) be the endpoints of the hyper-edge
\(\beta_i\) and \(g\) be the complete geodesic containing \(\beta_i\), oriented
from \(A_1\) to \(A_2\). Denote by \(H_1\) and \(H_2\) the orthogonal
projections of \(v_1\) and \(v_n\) onto \(g\), respectively (see Figure \ref{Figu11} (a)). We denote $\ell(A_j,H_j)$ as the Signed distance from $A_j$ to $H_j$, $j=1,2$. In triangles $\Delta v_1A_1H_1$ and $\Delta v_nA_2H_2$, we have
$$
\tanh \ell(A_1,H_1)=\tanh l^1\cos\alpha^1,\qquad \tanh \ell(A_2,H_2)=\tanh l^2\cos\alpha^2.
$$

Therefore, $u_j=\ell(A_j,H_j),j=1,2$, and $\beta_i-u_1-u_2$ is the signed distance from $H_1$ to $H_2$. By applying a Möbius transformation to the polygon $P$, we map the point \( A_1 \) to the center of the Poincaré disk. Under this mapping, the geodesic \( g \) becomes a straight line in the disk, and the geodesic containing the side \( l_{i-1} \) likewise becomes a straight line which perpendicular to $g$. Furthermore, \( v_1H_1 \) and \( v_1H_2 \) are also two geodesics perpendicular to \( g \). From the convexity of the polygon \( P \), it follows that the four points appear on \( g \) in the order \( A_1, H_1, H_2, A_2 \)(see Figure \ref{Figu11} (b)). Therefore, $\beta_i-u_1-u_2>0$. Consequently, with $l^1, l^2, \alpha^1, \alpha^2$ fixed,
the opposite side length $l_n$ is strictly increasing as a function of the hyper-edge
length $\beta_i$, i.e.
$$
\frac{\partial l_n}{\partial\beta_i}>0.
$$
\end{proof}

\section{Length comparison under deformed generalized circle configurations}

\subsection{Continuity of segment length under deformation}



\begin{lemma}[Continuity of geodesic arc length under deformation]\label{lem-3.7}
  Let $\mathcal{P} = \{C_1, C_2, \dots, C_n\}$ be a generalized circle configuration with circles arranged in the counterclockwise order (see Definition \ref{def-2.3}), and let $P_f$ be the induced hyperbolic polygon. Assume that $\gamma$ is a geodesic on $P_f$ that transversely intersects the boundary of $P_f$ at two points $y_1$ and $y_2$ (see the blue segment on Figure \ref{Fig12} as an example). We assume that $y_i$ lies on the edge $e_{j_i, j_i+1}$ connecting the vertices $v_{j_i}$ and $v_{j_i+1}$ and 
  $t_{j_i}$ is the tangent pint of the circles centered at $v_{j_i}$ and $v_{j_i+1}$, and we further assume that $y_i$ lies on the geodesic arc between 
  $v_{j_i}$ and $t_{j_i}$, 
  where $i=1, 2$. Additionally, $y_i$ does not coincide with the center of the horocycle. Let $\mathcal{P}^{m} = \{C_1^m, C_2^m, \dots, C_n^m\}$, $m=1, 2, \cdots $, be a sequence of generalized circle configurations with the curvature $k_i^m$ of $C_i^m$ converging to the curvature $k_i$ of $C_i$ as $m$ goes to $\infty $ for each $i=1, 2, \cdots, n$, let $P_f^m$ be the induced hyperbolic polygon of $\mathcal{P}^m$ and $y_1^m$ and $y_2^m$ be the deformations of $y_1$ and $y_2$ respectively. That is, $y_i^m$ lies on the same side of $t^m_{j_i}$ as $y_i$ does with respect to $t_{j_i}$, and its distance to $t^m_{j_i}$ equals that from $y_i$ to $t_{j_i}$, where $v^m_{j_i}$ and $t^m_{j_i}$ are defined analogously to $v_{j_i}$ and $t_{j_i}$, where $i = 1, 2$. Let $\gamma_m$ be the geodesic segment connecting $y_1^m$ and $y_2^m$. Then $$\ell(\gamma^m)=d(y_1^m,y_2^m)\rightarrow \ell(\gamma)=d(y_1,y_2) \text{ as }m\rightarrow\infty.$$
\end{lemma}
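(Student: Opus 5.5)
Since hyperbolic distance is continuous on $\mathbb{H}^2\times\mathbb{H}^2$, the plan is to realize all configurations in one copy of $\mathbb{H}^2$ with a common normalization, and show that the deformed points converge: $y_i^m\to y_i$ for $i=1,2$. Then
$$d(y_1^m,y_2^m)\to d(y_1,y_2)$$
follows immediately. Concretely, I fix the normalization by placing the center of the dual circle $C_P^m$ at a fixed point $O$, and the tangent point $T_1^m$ of $C_1^m$ and $C_2^m$ on a fixed ray from $O$. These choices use only isometries, so lengths are unaffected.

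\textbf{Step 1 (dual circle).} By Lemma \ref{lem-3.2}, $k_P$ is a $C^1$ function of $(k_1,\dots,k_n)$, so $k_P^m\to k_P$. Hence the radius $r_P^m=\operatorname{arccoth}k_P^m$ converges to $r_P$.

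\textbf{Step 2 (tangent points).} By Lemma \ref{lem-3.1}, the angles $\theta_i(k_i^m,k_P^m)$ subtended at $O$ are continuous, so they converge to $\theta_i(k_i,k_P)$. Since the tangent points lie on $C_P^m$, they are placed successively at angles $\sum_{j<i}\theta_j^m$ along a circle of radius $r_P^m$. Therefore every tangent point $t_j^m$ converges to $t_j$.

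\textbf{Step 3 (edge directions).} The edge $e_{j_i,j_i+1}$ passes through $t_{j_i}$ and is the geodesic orthogonal to the radius $Ot_{j_i}$ of $C_P$ there, since $C_P$ is perpendicular to both tangent circles and the common normal of two tangent circles is the line of centers. So the unit tangent vector of the edge at $t^m_{j_i}$ is determined continuously by the data and converges.

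\textbf{Step 4 (the points $y_i^m$).} The point $y_i^m$ is obtained by moving from $t_{j_i}^m$ a fixed distance $d_i=d(y_i,t_{j_i})$ along this geodesic, toward $v_{j_i}^m$. The exponential map is continuous in base point, direction and length, so $y_i^m\to y_i$.

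The main obstacle is the case where $v_{j_i}$ is an ideal point (a horocycle center, $k_{j_i}=1$) or is an endpoint of a hypercycle axis, and the type of the circle changes along the sequence. In that case $v^m_{j_i}$ itself need not converge in $\mathbb{H}^2$. The construction above avoids this by working only with $t^m_{j_i}$ and directions. The one point still to check is that $y_i^m$ actually stays on the segment from $v^m_{j_i}$ to $t^m_{j_i}$, i.e. that $d(v^m_{j_i},t^m_{j_i})>d_i$ for large $m$.
\begin{itemize}
\item If $k_{j_i}>1$, this distance is the radius $\operatorname{arccoth}k^m_{j_i}$, which converges to $\operatorname{arccoth}k_{j_i}>d_i$.
\item If $k_{j_i}\le 1$, the distance to the vertex tends to infinity or stays bounded below away from $d_i$, again by continuity of the quadrilateral formulas (\ref{equa-6}) and (\ref{equa-7}).
\end{itemize}
The hypothesis that $y_i$ is not the horocycle center ensures $d_i<\infty$, so this works in all cases.

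Finally, transversality and convexity (H3) guarantee that $\gamma^m$ is a genuine segment inside $P_f^m$. Continuity of $d$ then gives the claim.
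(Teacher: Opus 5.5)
Your proposal is correct and follows essentially the paper's route: normalize via the dual circle, show $y_i^m\to y_i$ in $\mathbb{H}^2$, and conclude by continuity of the hyperbolic distance. The paper simply asserts Hausdorff convergence of the edge pieces $\eta_i^m$ and the dual-circle arc $\sigma^m$ from Lemma \ref{lem-3.2}. You justify that step properly, using convergence of the subtended angles $\theta_i$ (Lemma \ref{lem-3.1}) and the fact that each edge is tangent to $C_P$ at its tangent point, and your normalization (fixed center, tangent point on a fixed ray) is the consistent form of the paper's. Your closing checks that $y_i^m$ stays on the segment from $v^m_{j_i}$ to $t^m_{j_i}$ and that $\gamma^m\subset P_f^m$ are not needed for the stated conclusion, which only uses $y_i^m\to y_i$.
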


\begin{figure}[ht]
{
\centering
\includegraphics[height=8cm]{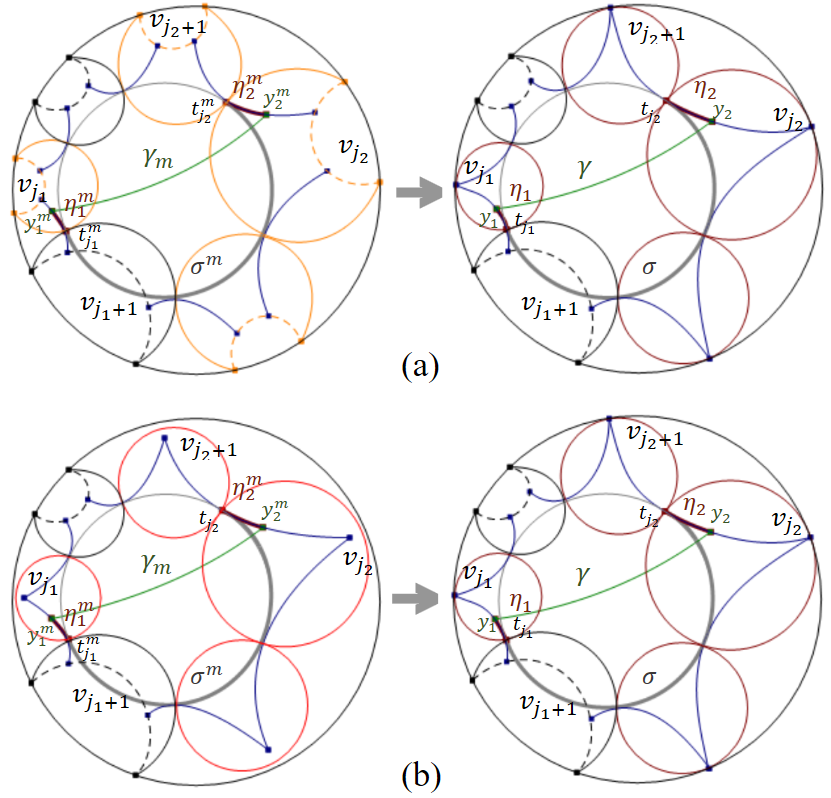} 
\caption{The approximal of horocycle.}
\label{Fig12}

}
\end{figure}

\begin{proof}

Without loss of generality, we assume that $y_i$ (resp. $y^m_i$) lies on the geodesic arc between $v_{j_i}$ and $t_{j_i}$ (resp. ${v}_{j_i}$ and ${t}^m_{j_i}$), where $i=1, 2$ (see Figure \ref{Fig12}). For the sake of using uniform notation to define more items, we let $\mathcal{P}^0=\mathcal{P}$.

For each configuration $\mathcal{P}^m$, we construct a continuous path $\Gamma^m$ connecting $y_1^m$ and $y_2^m$, where $m=0, 1, 2, \cdots $.

Let $\eta_i^m \subset e^m_{j_i, j_i+1}\subset \partial P_f^m$ be the boundary segment from $y_i^m$ to the tangent point $t_{j_i}^m$, where  $i=1, 2$, and let $\sigma^m$ be the arc on the dual circle between the two tangent points $t_{j_1}^m$ and $t_{j_2}^m$ (see Figure \ref{Fig12}). Define $\Gamma^m=\eta_1^m\cup\sigma^m \cup\eta_2^m$.

Upon post-compositions by M\"obius transformations, we may assume that  the dual circles of $\mathcal{P}^m$, $m=0, 1, 2, \cdots$, are arranged at the same position on the hyperbolic plane $\mathbb{H}$ and the tangent points $t_{j_1}^m$, $m=0, 1, 2, \cdots $, are arranged at the same point. 

Each component of $\Gamma^m$ is a compact subset of $\mathbb{H}^2$, where $m=0, 1, 2, \cdots$. Since $\mathbf{k}^m = (k_1^m, k_2^m,\cdots, k_n^m)$ approaches $\mathbf{k} = (k_1, k_2,\cdots, k_n)$ as $m\rightarrow \infty$, from Lemma \ref{lem-3.2} it follows that 
$\sigma^m$ (resp. $\eta_1^m$, $\eta_2^m$) converges to $\sigma^0$ (resp. $\eta_1^0$, $\eta_2^0$) in the Hausdorff topology on $\mathbb{H}^2$.
Thus, $\Gamma^m \to \Gamma$ converges to $\Gamma^0$ in the Hausdorff topology on $\mathbb{H}^2$ as $m\rightarrow \infty$. Then the endpoints $y_1^m$ and $y_2^m$ of $\Gamma^m$ converge to the endpoints 
$y^0_1=y_1$ and $y^0_2=y_2$ of $\Gamma^0$ as $m\rightarrow \infty$. 
Therefore, the hyperbolic distance between $y_1^m$ and $y_2^m$ converges to the distance between $y_1$ and $y_2$ as $m\rightarrow \infty$.

\end{proof}


\subsection{A uniform derivative estimate near horocycles}

\begin{lemma}[Uniform negative derivative estimate]
\label{lem:uniform-negative-derivative}
Let
\[
\mathcal P_{\kappa}=\{C_1,C_2,\ldots,C_n\},
\qquad
\kappa\in K=[k_1,\widetilde k_{1}]\Subset (0,+\infty),
\]
be a $C^1$-family of generalized circle configurations satisfying
$(H1)$--$(H4)$ in Definition~\ref{def-2.3}.  We assume that the
geodesic curvature of $C_1$ is $\kappa$, while the geodesic curvatures of
$C_2,\ldots,C_n$ are independent of $\kappa$.

Let $\gamma_{\kappa}$ be a geodesic segment that meets the hypotheses of Lemma~\ref{lem-3.7}. Thus, $\gamma_{\kappa}$ intersects the induced hyperbolic polygon of $\mathcal{P}_{\kappa}$ transversely at two
points, and these intersection points do not coincide with the ideal
centers of horocycles.  Let $P^*_{\kappa}$ and $P_{*,\kappa}$ be the two
subpolygons cut out by $\gamma_{\kappa}$, where $P^*_{\kappa}$ contains the
vertex corresponding to $C_1$ and $P_{*,\kappa}$ does not.

After a cyclic relabeling, write the part of
$\partial P_{*,\kappa}\setminus\gamma_{\kappa}$ as lying on the generalized
circles
\[
C_{j_1+1},C_{j_1+2},\ldots,C_{j_2},
\]
where the indices are understood as modulo $n$.  Let
\[
Q=\{q_1,\ldots,q_s\}\subset\{j_1+1,\ldots,j_2\}
\]
be the set of indices for which $C_q$ is a horocycle, or equivalently
$k_q=1$.

For each $m\ge1$, let
\[
\mathcal P^m_{\kappa}=\{C^m_1,C^m_2,\ldots,C^m_n\}
\]
be a generalized circle configuration such that
\[
{k_1}^m=\kappa,
\qquad
k^m_j=k_j \quad \text{for } j\notin Q,\ j\neq 1,
\]
and, for every $q\in Q$,
\[
k^m_q\in (1,+\infty),
\qquad
k^m_q\to 1
\quad \text{as } m\to\infty .
\]
In particular, $\mathcal P^m_{*,\kappa}$ has no horocycle on
$\partial P^m_{*,\kappa}\setminus\gamma^m_{\kappa}$, and
\[
\mathcal P^m_{*,\kappa}\longrightarrow \mathcal P_{{\kappa}_1}
\]
uniformly for $k_1\in K$.

Let $\gamma^m_{\kappa}$ be the deformation of $\gamma_{\kappa}$ in
$\mathcal{P}_{\kappa}^m$ in the sense of Lemma~\ref{lem-3.7}, and set
\[
\ell_m({\kappa}):=\ell(\gamma^m_{\kappa}).
\]
Assume that the limiting family is uniformly non-degenerate on $K$; that is,
the endpoints of $\gamma^m_{\kappa}$ remain in a compact subset of
$\mathbb H^2$ after normalization, do not converge to the ideal centers of
horocycles, and the auxiliary triangles and quadrilaterals used in the
decomposition of $P^m_{*,\kappa}$ have non-degenerate limits.

Then there exist constants $m_0\in\mathbb N$ and $c>0$, independent of
$m$ and $\kappa$, such that
\[
\frac{d\ell_m}{d\kappa}(\kappa)\le -c<0,
\qquad
m\ge m_0,\quad \kappa\in K .
\]
If $1\in K$, the derivative at $\kappa=1$ is understood in the corresponding
one-sided sense; equivalently, the estimate holds on
$K\cap(0,1)$ and on $K\cap(1,+\infty)$, and extends to $\kappa=1$ by the
one-sided limits.
\end{lemma}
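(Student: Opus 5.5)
The plan is to write $\ell_m(\kappa)$ as a composition of explicit smooth functions and show that the derivative converges uniformly on $K$ to a strictly negative limit. Compactness of $K$ then gives the uniform constant $c$.

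\textbf{Step 1: normalize the geometry.} As in the proof of Lemma~\ref{lem-3.7}, apply M\"obius normalizations so that the dual circle of $\mathcal P^m_\kappa$ and the tangent point $t^m_{j_1}$ are fixed. By Lemma~\ref{lem-3.2}, $k_P$ is a $C^1$ function of the curvatures with $\partial k_P/\partial k_1>0$. Lemma~\ref{lem-3.1} then gives every central angle $\theta_i$ and every inner angle $\alpha_i$ or $\beta_i$ as an explicit $C^1$ function of $(k_i,k_P)$.

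\textbf{Step 2: write $\ell_m$ as a composition.} Decompose $P^m_{*,\kappa}$ by auxiliary geodesics into triangles and quadrilaterals, exactly as in the proof of Lemma~\ref{lem-3.6}. Then $\ell_m(\kappa)$ is obtained by repeatedly applying the hyperbolic cosine law (\ref{equ-10}) and the quadrilateral formula (\ref{equ-11}). The inputs are the edge lengths of $P^m_{*,\kappa}$, its generalized interior angles, and the two boundary distances $d(y_i^m,t^m_{j_i})$, which are fixed by construction. Every edge length and interior angle of the polygon formed by the circles $C_{j_1+1},\dots,C_{j_2}$ depends on $\kappa$ only through $k_P$. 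Hence
\[
\frac{d\ell_m}{d\kappa}=\sum_{i}\frac{\partial \ell_m}{\partial I_i}\,\frac{\partial I_i}{\partial k_P}\,\frac{\partial k_P}{\partial \kappa}+(\text{terms from the endpoint edges}),
\]
where the last terms come from the dependence of the $y_i^m$ on the edge data. By Lemma~\ref{lem-3.6}, $\partial\ell_m/\partial I_i>0$. By Lemma~\ref{lem-3.1}(c),(d), $\partial I_i/\partial k_P<0$. By (\ref{equa-10}), $\partial k_P/\partial\kappa>0$. So each summand is negative.

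\textbf{Step 3: pass to the limit $m\to\infty$.} For $q\in Q$ we have $k^m_q\downarrow1$, and $\alpha_q$ then tends to $0$ in a way that matches the horocyclic limit. A direct computation shows that the explicit factors in Lemma~\ref{lem-3.1} and (\ref{equa-10}) extend continuously to $k=1$. The vertex $v_q$ escapes to the ideal point, but the triangles adjacent to it have finite limits. To handle this, measure the relevant lengths from the tangent points instead of from $v_q$, as in Lemma~\ref{lem-3.7}. By the non-degeneracy hypothesis, all auxiliary triangles and quadrilaterals have non-degenerate limits, and the endpoints stay in a compact set. Therefore $d\ell_m/d\kappa$ converges uniformly on $K$ to a continuous function $D(\kappa)$, which is the derivative of the limiting length.

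\textbf{Step 4: strict negativity and the constant.} It remains to show that $D$ is strictly negative. At least one summand is strictly negative in the limit: for instance, the term from a non-horocyclic circle, or from the angle $\theta$ subtended on the dual circle, whose derivative is given by Lemma~\ref{lem-3.1}(a). This term is bounded away from zero because $k_P$ lies in a compact subset of $(1,\infty)$. By compactness of $K$, $\max_K D=-2c<0$. Uniform convergence then gives $m_0$ such that $d\ell_m/d\kappa\le -c$ for all $m\ge m_0$ and $\kappa\in K$. At $\kappa=1$, the same argument is applied separately on $K\cap(0,1]$ and $K\cap[1,\infty)$, using the one-sided continuous extensions.

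\textbf{Main obstacle.} The hardest part is Step 3, specifically the uniform control near horocycles. The factors $\sqrt{k_q^2-1}$ vanish as $k_q\to1$, and the polygon vertex $v_q$ runs off to infinity, so the cosine-law decomposition must be reorganized. My first attempt would be to replace the triangles containing $v_q$ by quadrilaterals with one ideal vertex, written in terms of tangent-point distances. I would then check that the product $\frac{\partial\ell_m}{\partial\alpha_q}\cdot\frac{\partial\alpha_q}{\partial k_P}$ has a finite nonpositive limit.
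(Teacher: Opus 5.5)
There is a genuine gap, and it sits where you flagged the ``main obstacle.'' In Step~3 you assert that $d\ell_m/d\kappa$ converges uniformly because the factors in Lemma~\ref{lem-3.1} and (\ref{equa-10}) extend continuously to $k=1$. For $q\in Q$ this fails. The factor $\partial\alpha_q^m/\partial k_P^m=-2\sqrt{(k_q^m)^2-1}/\bigl((k_P^m)^2+(k_q^m)^2-1\bigr)$ extends continuously to the value $0$, while $\partial\ell_m/\partial\alpha_q^m$ blows up as $\alpha_q^m\to 0$. So factor-wise continuity says nothing about the product.

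The missing idea is a compensation estimate. Freeze the other generalized angles, so the two side pieces are rigid by Lemma~\ref{lem-3.6}. Then work in the auxiliary triangle with vertices $v_q^m,y_1^m,y_2^m$, with angle $\alpha_q^{m\prime}$ at $v_q^m$ and angles $\theta^m_{q,1},\theta^m_{q,2}$ at $y_1^m,y_2^m$.
\begin{itemize}
\item The cosine and sine laws give $\partial\ell_m/\partial I_q^m=\sin\theta^m_{q,1}\sin\theta^m_{q,2}\sinh\ell_m/\sin\alpha_q^{m\prime}$.
\item Lemma~\ref{lem-3.1} rewrites as $\partial\alpha_q^m/\partial k_P^m=-2k_P^m\tan(\alpha_q^m/2)/\bigl((k_P^m)^2+(k_q^m)^2-1\bigr)$.
\item Since $\alpha_q^{m\prime}\le\alpha_q^m$, one gets $2\tan(\alpha_q^m/2)/\sin\alpha_q^{m\prime}\ge\alpha_q^m/\alpha_q^{m\prime}\ge 1$.
\end{itemize}
Non-degeneracy and $k_P^m\in[p_0,p_1]$ then give a uniform negative upper bound for each horocycle term. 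This needs no limit, only a one-sided bound. Your route would also require proving that $\alpha_q^m/\alpha_q^{m\prime}$ converges uniformly in $\kappa$, which is an asymptotic analysis of visual angles from a vertex escaping to an ideal point. The proposal does not supply this.

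The second gap is in Step~4. Your strictly negative summand comes from a non-horocyclic circle on the $P_*$ side. But the lemma allows $Q=\{j_1+1,\dots,j_2\}$, for instance when $\gamma_\kappa$ cuts off a single ideal vertex. Then every term is a horocycle term, and a ``finite nonpositive limit'' for them does not give $D<0$. Your fallback to the dual-circle angle $\theta$ and Lemma~\ref{lem-3.1}(a) does not fit the Step~2 chain rule, whose inputs are edge lengths and generalized interior angles. Using $\theta$ would require a different parametrization of $\ell_m$ that you have not set up.

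A minor point: the ``terms from the endpoint edges'' vanish, and you should say so rather than leave their sign open. The partial edges of $P^m_{*,\kappa}$ ending at $y_i^m$ have lengths fixed by $d(y_i^m,t^m_{j_i})$ and by radii of circles other than $C_1$.
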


\begin{proof}
For simplicity, write
\[
k_P^m=k_P^m(\kappa)
\]
for the geodesic curvature of the dual circle of the configuration
$\mathcal P^m_{\kappa}$.  Let
\[
\mathcal J=\{j_1+1,j_1+2,\cdots,j_2\}
\]
be the set of indices corresponding to the generalized circles on
\(\partial P^m_{*,\kappa}\setminus \gamma^m_{\kappa}\), with the indices understood
modulo \(n\).  For \(j\in\mathcal J\), denote by
\[
I_j^m=I_j^m(\kappa)
\]
the corresponding generalized interior angle of \(P^m_{*,k_1}\).  Thus
\(I_j^m=\alpha_j^m\) if \(C_j^m\) is a hyperbolic circle, and
\(I_j^m=\beta_j^m\) if \(C_j^m\) is a hypercicle. Since the subpolygon \(P^m_{*,k_1}\) does not contain the vertex corresponding
to \(C_1\), the parameter \(\kappa\) enters the generalized interior angles
\(I_j^m\) only through the dual curvature \(k_P^m\).  Hence the chain rule gives

\begin{equation}
\label{eq:chain-rule-63}
\frac{d\ell_{m}}{d\kappa}
=
\sum_{j\in\mathcal J}
\frac{\partial \ell_{m}}{\partial I_{j}^{m}}
\frac{\partial I_{j}^{m}}{\partial k_{P}^m}
\frac{\partial k_{P}^m}{\partial \kappa}.
\end{equation}
By Lemma \ref{lem-3.6},
\begin{equation}
\label{eq:length-angle-positive-63}
\frac{\partial \ell_{m}}{\partial I_{j}^{m}}>0.
\end{equation}
By Lemma \ref{lem-3.2},
\[
\frac{\partial k_{P}^m}{\partial \kappa}>0.
\]
More explicitly, by equation (\ref{equa-10}), we have
\begin{equation}
\label{eq:dual-derivative-63}
\frac{\partial k_{P}}{\partial \kappa}
=
\frac{(k_{P}^m)^{2}-1}{(k_{P}^m)^{2}+\kappa^{2}-1}
\left(
\sum_{j=1}^{n}
\frac{k_{P}^m k_{j}^{m}}{(k_{P}^m)^{2}+(k_{j}^{m})^{2}-1}
\right)^{-1}.
\end{equation}
Since $\kappa\in K\Subset(0,+\infty)$ and the limiting family is
non-degenerate, the dual curvatures $k_{P}(\kappa)$ remain in a compact
subinterval of $(1,+\infty)$.  Hence, there exists a constant $a_{0}>0$ such
that
\begin{equation}
\label{eq:dual-derivative-lower-bound-63}
\frac{\partial k_{P}^m}{\partial \kappa}\ge a_{0}>0
\end{equation}
for all sufficiently large $m$ and all $\kappa\in K$.

We now analyze the remaining product
\[
\frac{\partial \ell_{m}}{\partial I_{j}^{m}}
\frac{\partial I_{j}^{m}}{\partial k_{P}^m} .
\]

First let \(j\in\mathcal J\setminus Q\).  Then \(k_j^m=k_j\neq 1\).  If
\(k_j>1\), then \(I_j^m=\alpha_j^m\), and Lemma~\ref{lem-3.1} gives
\begin{equation}
\label{eq:alpha-derivative-away}
\frac{\partial \alpha_j^m}{\partial k_P^m}
=
-\frac{2\sqrt{k_j^2-1}}{(k_P^m)^2+k_j^2-1}<0 .
\end{equation}
If \(0<k_j<1\), then \(I_j^m=\beta_j^m\), and Lemma~\ref{lem-3.1} gives
\begin{equation}
\label{eq:beta-derivative-away}
\frac{\partial \beta_j^m}{\partial k_P^m}
=
-\frac{2\sqrt{1-k_j^2}}{(k_P^m)^2+k_j^2-1}<0 .
\end{equation}
Since \(k_P^m\) ranges in a compact subinterval of \((1,+\infty)\), and since
the family is uniformly non-degenerate, the quantities
\[
\frac{\partial \ell_m}{\partial I_j^m}
\frac{\partial I_j^m}{\partial k_P^m}
\]
are uniformly bounded above by a negative constant for every fixed
\(j\in\mathcal J\setminus Q\).

It remains to consider the indices \(q\in Q\), for which \(C_q\) is a
horocycle in the limiting configuration.  In the present lemma we approximate
these horocycles only by hyperbolic circles.  Thus
\[
k_q^m>1,
\qquad
k_q^m\to1^+.
\]

The important point is that the angle appearing in the chain rule is the whole
angle of the piece of $C_q^m$ contained in
$\partial P^m_{*,\kappa}\setminus\gamma^m_{\kappa}$. We denote this angle by
\[
I_q^m=\alpha_q^m.
\]

Since $\alpha_q^m=I_q^m$ is the angle determined by the dual circle, Lemma~\ref{lem-3.1}
gives
\begin{equation}
\label{eq:cot-Aq-revised}
\cot \frac{\alpha_q^m}{2}
=
\frac{k_P^m}{\sqrt{(k_q^m)^2-1}} .
\end{equation}
Therefore
\begin{equation}
\label{eq:Aq-derivative-revised}
\frac{\partial I_q^m}{\partial k_P^m}
=
\frac{\partial \alpha_q^m}{\partial k_P^m}
=
-
\frac{2\sqrt{(k_q^m)^2-1}}{(k_P^m)^2+(k_q^m)^2-1}
=
-
\frac{2k_P^m\tan(\alpha_q^m/2)}{(k_P^m)^2+(k_q^m)^2-1} .
\end{equation}
In particular, $\alpha_q^m\to0$ uniformly for $\kappa\in K$.

\begin{figure}[ht]
{
\centering
\includegraphics[height=4cm]{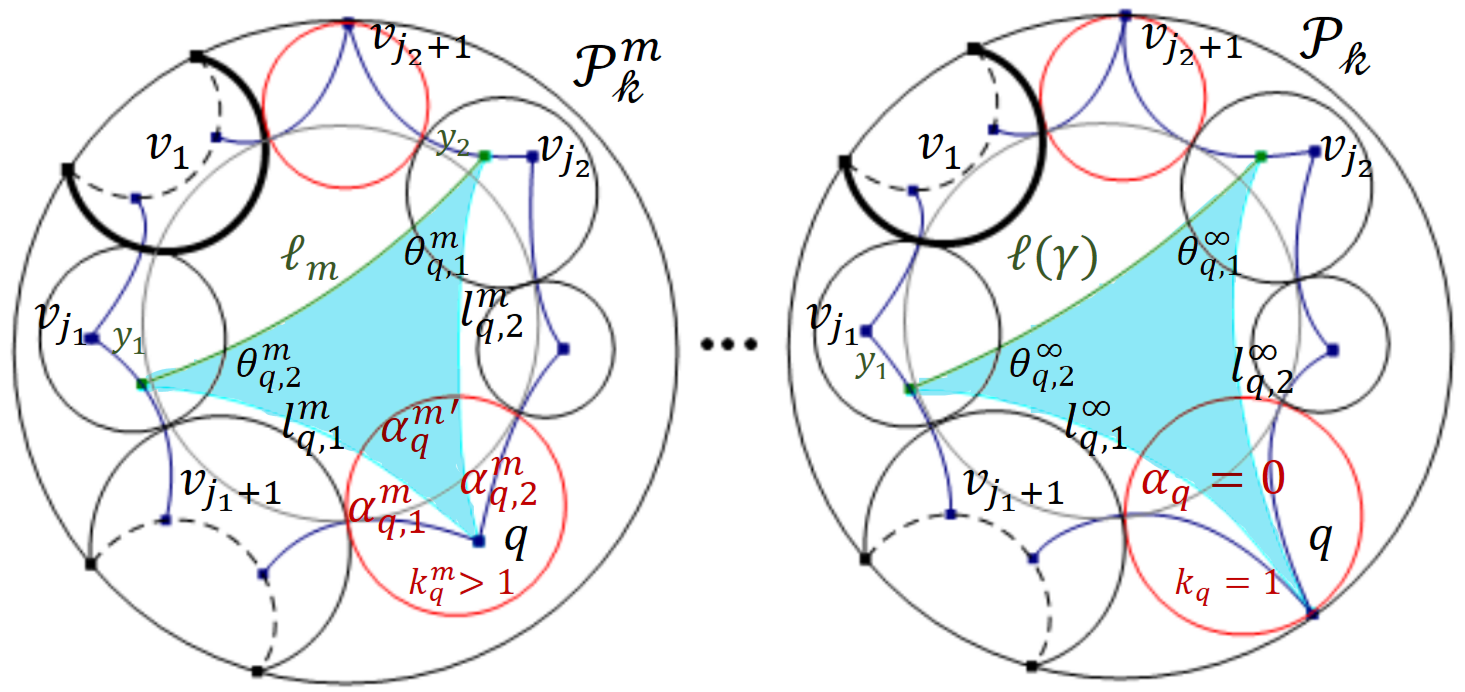} 
\caption{The auxiliary hyperbolic triangle in $\mathcal{P}_{\kappa}^m$ and $\mathcal{P}_{\kappa}$.}
\label{Figuu13}

}
\end{figure}

Next, we estimate $\partial\ell_m/\partial I_q^m$ by considering $I_j^m, j\in \mathcal{J}$, as independent variables of $\ell_m$ and temporarily fixing $I_j^m, j\in \mathcal{J}\setminus \{q\}$.
In the auxiliary triangle at $C_q^m$ (see Figure \ref{Figuu13}), we define $\alpha_q^{m\prime}$ as the angle opposite to the side
$\ell_m$. Let $\alpha_{q,1}^m \text{ and } \alpha_{q,2}^m$ be the two remaining angle portions adjacent
to the endpoints of $\gamma^m_{\kappa}$ on $C_q^m$. Thus, 
\begin{equation}\label{decomposition}
I_q^m=\alpha_{q}^m=\alpha_{q}^{m\prime}+\alpha_{q,1}^m+\alpha_{q,2}^m.\end{equation}

Let $l_{q,1}^m \text{ and }l_{q,2}^m$ be the two sides adjacent to
$\alpha_q^{m\prime}$, and let
$\theta_{q,1}^m \text{ and }\theta_{q,2}^m$ be the other two angles. The opposite side of $\alpha_q^{m\prime}$ is $\ell_m$. 

Applying Lemma \ref{lem-3.6} to the remaining two hyperbolic polygons, we know the lengths of $l_{q,1}^m \text{ and }l_{q,2}^m$ and the values of $\alpha_{q,1}^m \text{ and }\alpha_{q,2}^m$
are also temporarily fixed.
 Thus,
$\partial \alpha_q^{m\prime}/\partial I_q^m=1$.  Combined with the hyperbolic cosine law, we obtain
\begin{equation}
\label{eq:dL-dIq-revised}
\frac{\partial \ell_m}{\partial I_q^m}
=
\frac{\partial \ell_m}{\partial \alpha_q^{m\prime}}
=
\frac{\sinh l_{q,1}^m\sinh l_{q,2}^m\sin\alpha_q^{m\prime}}
{\sinh\ell_m} .
\end{equation}
Using the hyperbolic sine law in the same triangle,
\[
\frac{\sinh \ell_m}{\sin\alpha_q^{m\prime}}
=
\frac{\sinh l_{q,1}^m}{\sin\theta_{q,2}^m}
=
\frac{\sinh l_{q,2}^m}{\sin\theta_{q,1}^m},
\]
we obtain
\begin{equation}
\label{eq:dL-dalpha}
\frac{\partial \ell_m}{\partial I_q^m}
=
\frac{
\sin\theta_{q,1}^m\sin\theta_{q,2}^m\sinh \ell_m
}
{\sin\alpha_q^{m\prime}}.
\end{equation}
Combining \eqref{eq:Aq-derivative-revised} and
\eqref{eq:dL-dalpha}, we obtain the corrected product formula
\begin{equation}
\label{eq:corrected-horocycle-product-revised}
\frac{\partial \ell_m}{\partial I_q^m}
\frac{\partial I_q^m}{\partial k_P^m}
=
-
\frac{
\sin\theta_{q,1}^m\sin\theta_{q,2}^m\sinh\ell_m
}{
\sin\alpha_q^{m\prime}
}
\cdot
\frac{
2k_P^m\tan(\alpha_q^m/2)
}{
(k_P^m)^2+(k_q^m)^2-1
} .
\end{equation}

We now show that the right-hand side of
\eqref{eq:corrected-horocycle-product-revised} is bounded above by a negative
constant, uniformly in $m$ and $\kappa$. Since the limiting family is uniformly
non-degenerate, there exist constants $s_0,L_0>0$ such that, for all sufficiently
large $m$ and all $\kappa\in K$,
\begin{equation}
\label{eq:nondegenerate-lower-bounds-revised}
\sin\theta_{q,1}^m\ge s_0,
\qquad
\sin\theta_{q,2}^m\ge s_0,
\qquad
\sinh\ell_m\ge L_0 .
\end{equation}
Moreover, the dual curvatures $k_P^m$ remain in a compact subinterval
$[p_0,p_1]\Subset(1,+\infty)$, and $k_q^m\to1^+$. Hence, for all sufficiently
large $m$,
\begin{equation}
\label{eq:kp-compact-revised}
0< p_0\le k_P^m\le p_1,
\qquad
(k_P^m)^2+(k_q^m)^2-1\le p_1^2+1 .
\end{equation}
Finally, by \eqref{decomposition},
$0<\alpha_q^{m\prime}\le \alpha_q^m$. Since $\alpha_q^m\to0$ uniformly, for all
sufficiently large $m$ we have $0<\alpha_q^m<\pi/2$ and
$0<\alpha_q^{m\prime}<\pi/2$. Therefore,
\begin{equation}
\label{eq:angle-ratio-lower-revised}
\frac{2\tan(\alpha_q^m/2)}{\sin\alpha_q^{m\prime}}
\ge
\frac{\alpha_q^m}{\alpha_q^{m\prime}}
\ge 1 .
\end{equation}
Using \eqref{eq:nondegenerate-lower-bounds-revised},
\eqref{eq:kp-compact-revised}, and \eqref{eq:angle-ratio-lower-revised} in
\eqref{eq:corrected-horocycle-product-revised}, we get
\begin{equation}
\label{eq:horocycle-negative-uniform-revised}
\frac{\partial \ell_m}{\partial I_q^m}
\frac{\partial I_q^m}{\partial k_P^m}
\le
-
\frac{s_0^2L_0p_0}{p_1^2+1}
=:-b_q<0
\end{equation}
for all sufficiently large $m$ and all $\kappa\in K$. 

If $Q\neq\varnothing$, choose one $q_0\in Q$ and set $b=b_{q_0}$. Since all
terms in the sum
\[
\sum_{j\in\mathcal J}
\frac{\partial \ell_m}{\partial I_j^m}
\frac{\partial I_j^m}{\partial k_P^m}
\]
are non-positive, \eqref{eq:horocycle-negative-uniform-revised} implies
\begin{equation}
\label{eq:sum-negative-revised}
\sum_{j\in\mathcal J}
\frac{\partial \ell_m}{\partial I_j^m}
\frac{\partial I_j^m}{\partial k_P^m}
\le -b<0 .
\end{equation}
If $Q=\varnothing$, then the compact-away-from-$1$ argument above gives the
same conclusion for some index $j_0\in\mathcal J$.

Combining \eqref{eq:chain-rule-63},
\eqref{eq:dual-derivative-63}, and
\eqref{eq:sum-negative-revised}, we obtain
\[
\frac{d\ell_m}{d\kappa}(\kappa)
\le
-a_0 b<0 .
\]
Taking $c=a_0b$ proves that
\[
\frac{d\ell_m}{d\kappa}(\kappa)
\le -c<0,
\qquad
m\ge m_0,\\ \kappa\in K,
\]
for some $m_0\in\mathbb N$. This completes the proof.

\end{proof}

\subsection{Monotonicity of geodesic segments under curvature change}

\begin{lemma}[Monotonicity of geodesic arc length under curvature change]\label{lem-3.8}
Let $\mathcal{P} = \{C_1, \dots, C_n\}$ and $\widetilde{\mathcal{P}} = \{\widetilde{C}_1, \dots, \widetilde{C}_n\}$ be two generalized circle configurations satisfying hypotheses \textup{(H1)}--\textup{(H4)} in Definition \ref{def-2.3}. Let $k_i$ and $\tilde{k}_i$ denote the geodesic curvatures of $C_i$ and $\widetilde{C}_i$, respectively. Assume that $\tilde{k}_i \ge k_i$ for all $i=1,\dots,n$, and there exists at least one index $1\le j\le n$ such that $\tilde{k}_j > k_j$.

Let $P_f$ and $\widetilde{P}_f$ be the induced hyperbolic polygons. Let $\gamma \subset P_f$ and $\tilde{\gamma} \subset \widetilde{P}_f$ be geodesic segments transversely intersecting the respective boundaries at points $y_1, y_2$ and $\tilde{y}_1, \tilde{y}_2$. Assume that $y_i$ and $\tilde{y}_i$ lie on the edges corresponding to the same index, and does not coincide with the center of the horocycle. Let their hyperbolic distances to the adjacent tangent points are preserved; that is, $d(y_i, t_{j_i}) = d(\tilde{y}_i, \widetilde{t}_{j_i})$ for $i=1,2$.

Then the length of $\tilde{\gamma}$ is strictly less than the length of $\gamma$; that is, $\ell(\tilde{\gamma}) < \ell(\gamma)$.
\end{lemma}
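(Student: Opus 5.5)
We reduce to a one-parameter monotonicity statement and integrate it. Since $\tilde{k}_i\ge k_i$ for all $i$, we change the curvatures one index at a time: set $\mathbf{k}^{(0)}=\mathbf{k}$ and let $\mathbf{k}^{(i)}$ be obtained from $\mathbf{k}^{(i-1)}$ by replacing $k_i$ with $\tilde{k}_i$, so that $\mathbf{k}^{(n)}=\tilde{\mathbf{k}}$. By Lemma \ref{lem2.5}, each intermediate vector determines a unique generalized circle configuration. The segment $\gamma$ is transported to each one as in Lemma \ref{lem-3.7}, keeping the endpoints on the same edges and at the same distances from the tangent points. It then suffices to show that along each single-index path $\kappa\mapsto(\ldots,\kappa,\ldots)$, $\kappa\in[k_i,\tilde{k}_i]$, the length $\ell(\kappa)$ is non-increasing, and strictly decreasing whenever $\tilde{k}_i>k_i$. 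Since at least one index is strict, the chained inequalities give $\ell(\tilde{\gamma})<\ell(\gamma)$.

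For a single index, say $C_1$ varies with curvature $\kappa$, we cut the polygon along $\gamma_\kappa$ into $P^*_\kappa$ and $P_{*,\kappa}$ as in Lemma \ref{lem:uniform-negative-derivative}. We choose the side not containing the vertex of $C_1$, which we can do whenever an endpoint does not lie on an edge incident to that vertex. The boundary of this side consists of pieces of $\partial P_f$ and the segment $\gamma_\kappa$. The edge pieces adjacent to $\gamma_\kappa$ have lengths fixed by the hypothesis $d(y_i,t_{j_i})$ fixed together with the fixed radii of the unchanged circles. The full edges between unchanged circles also have fixed lengths. Hence $\ell(\kappa)$ is the side determined by fixed side lengths and the generalized interior angles $I_j$, and $\kappa$ enters only through the dual curvature $k_P$. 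As in \eqref{eq:chain-rule-63}, the chain rule combines three signs: $\partial\ell/\partial I_j>0$ from Lemma \ref{lem-3.6}, $\partial I_j/\partial k_P<0$ from Lemma \ref{lem-3.1}(c),(d), and $\partial k_P/\partial\kappa>0$ from Lemma \ref{lem-3.2}. Together these give $d\ell/d\kappa<0$ at every $\kappa$ where no circle on that side is a horocycle. If the vertex of $C_1$ lies on both sides, so that $\gamma$ has an endpoint on an edge incident to it, we instead use the complementary description or split $\gamma$ through an auxiliary point. We expect the tangent-point normalization to make the edge pieces from $y_i$ to $t_{j_i}$ independent of $\kappa$ in either case.

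Horocycles are the main obstacle. At $k=1$ the inner angle $\alpha$ degenerates to $0$ and the vertex is ideal, so Lemmas \ref{lem-3.1} and \ref{lem-3.6} do not directly apply. This happens both when the varying curvature $\kappa$ crosses $1$ and when a fixed neighbouring circle is a horocycle. We handle it by approximation. We replace each horocycle on $\partial P_{*,\kappa}$ by hyperbolic circles with $k_q^m\downarrow 1$. Lemma \ref{lem:uniform-negative-derivative} then gives $d\ell_m/d\kappa\le -c<0$ uniformly on $K=[k_i,\tilde{k}_i]$, with one-sided derivatives at $\kappa=1$. Integrating yields $\ell_m(\tilde{k}_i)-\ell_m(k_i)\le -c(\tilde{k}_i-k_i)$. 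Letting $m\to\infty$ and using the continuity of Lemma \ref{lem-3.7} gives the strict inequality in the limit, with the uniformity of $c$ preventing the strictness from being lost. If the varying circle itself passes through curvature $1$, we split $K$ at $1$ and use continuity of $\ell$ at $\kappa=1$, again from Lemma \ref{lem-3.7}. The delicate point is verifying the non-degeneracy hypotheses of Lemma \ref{lem:uniform-negative-derivative} on a compact $K$. We would argue this by compactness, using continuity of all auxiliary triangles in $\mathbf{k}$ and the assumption that the $y_i$ stay away from ideal centers.
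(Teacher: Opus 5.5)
Your proposal is correct and follows the paper's proof essentially step for step: a one-index-at-a-time chain of intermediate configurations (the paper's Step 4); cutting off the subpolygon $P_*$ that avoids the varying vertex and combining Lemmas \ref{lem-3.6}, \ref{lem-3.1} and \ref{lem-3.2} via the chain rule (Steps 1--2); and treating horocycles on $\partial P_*$ by approximation, the uniform bound of Lemma \ref{lem:uniform-negative-derivative}, integration over $K$, and the continuity Lemma \ref{lem-3.7} (Step 3). Your worry about the vertex of $C_1$ lying ``on both sides'' of $\gamma$ is vacuous, because that vertex always lies on exactly one side and the edge pieces of $P_*$ adjacent to $\gamma$ have lengths $r_j\pm d(y_i,t_{j_i})$ with $C_j$ unchanged; likewise, $\kappa$ crossing $1$ needs no splitting, since $v_1\notin P_*$ and $k_P$ is $C^1$ in $\kappa$ by Lemma \ref{lem-3.2}.
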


\begin{proof}
Without loss of generality, we may assume that there are only one pair of circles in the two circle configurations are given different curvatures, say $C_1$ and $\tilde{C}_1$; that is, we may assume that $\widetilde k_{1} > k_1$ and $\tilde{k}_i = k_i$ for $i=2,\dots, n$.

For the general case where the geodesic curvatures at several vertices vary, the result follows by iterating this argument and invoking the transitivity of the inequality.

\begin{figure}[ht]
{
\centering
\includegraphics[height=4cm]{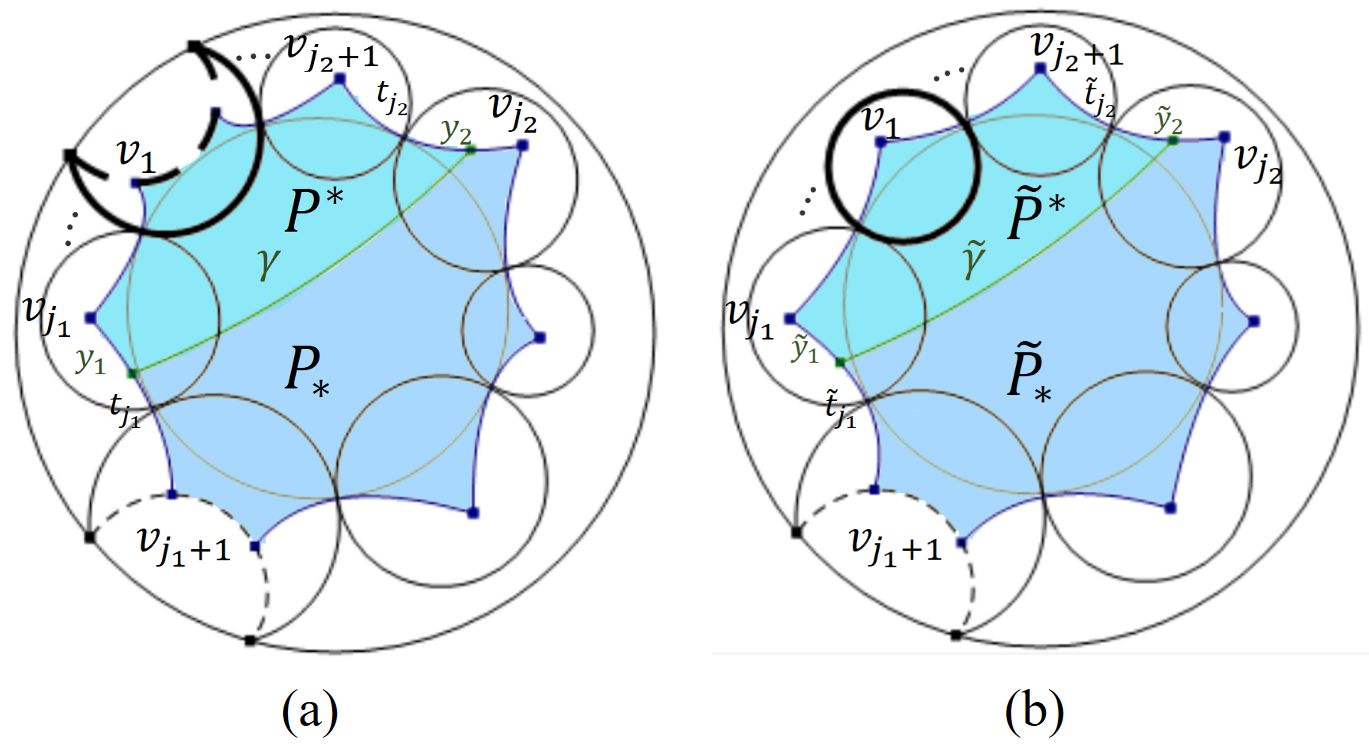} 
\caption{The construction of polygons $P^{*}$ (${P}_{*}$) and $\tilde{P}^{*}$ ($\tilde{P}_{*}$).}
\label{Fig14}

}
\end{figure}

\medskip\noindent\textbf{Step 1: The case where no horocycle appears except possibly $C_1$.}
Firstly, we assume that $k_i=\tilde{k}_i \in (0,1) \cup (1, +\infty)$ for $i=2,\cdots,n$. The geodesic $\gamma$ (resp. $\tilde{\gamma}$) divides $P_f$ (resp. $\tilde{P}_f$) into two compact subpolygons with disjoint interiors. Let $P^{*}$ (resp. $\tilde{P}^{*}$) denote the subpolygon whose boundary consists of $\gamma$ (resp. $\tilde{\gamma}$) together with the portion of the boundary of $P_f$ (resp. $\tilde{P}_f$) that joins $y_1$ to $y_2$ (resp. $\tilde{y}_1$ to $\tilde{y}_2$) and contains the vertex $v_1$ (resp. $\tilde{v}_1$). Moreover, let $P_*$ (resp. $\tilde{P}_*$) denote another subpolygon whose boundary not containing $v_1$. For example, Figure \ref{Fig14} shows $P^*$ and $\partial P^* = \gamma \cup \widehat{y_1 y_2}^{*}$ in the different cases, where $\widehat{y_1 y_2}^{*}$ represents the part of the boundary of $P^*$ connecting $y_1$ to $y_2$ and through the vertices of $P_f$ in the counterclockwise order (which containing $v_1$). Similarly, $\partial P_* = \gamma \cup \widehat{y_1 y_2}_{*}$, where $\widehat{y_1 y_2}_{*}$ represents the part of the boundary of $P_*$ connecting $y_1$ to $y_2$ and through the vertices of $P_f$ in the counterclockwise order (not containing $v_1$). Pictures hold for show $\tilde{P}^*$ and $\tilde{P}_*$.

Consider the dual circle $C_P$ (resp. $\widetilde{C}_P$) associated with the configuration $\mathcal{P}$ (resp. $\widetilde{\mathcal{P}}$). By Lemma~\ref{lem-3.2}, the geodesic curvature $k_P$ of the dual circle is a strictly increasing function of each $k_i$. Since $\widetilde k_{1} > k_1$, it follows that 
\[
k_{\tilde{P}}> k_P.
\]

Then by Lemma~\ref{lem-3.1}, if the curvature $k_P$ of the dual circle increases, then each generalized interior angle $I_i$ of $P_f$ strictly decreases. Thus, $\tilde{I}_i < I_i$ for each $i=2, \cdots, n$.

Since only $C_1$ varies, it follows that each edge of $P_*$ except $\gamma$ has length equal to the corresponding edge of $\tilde{P}_{*}$. From the above step, we have known that  the interior angle between any two edges of $\tilde{P}_*$ (not including $\gamma$) is strictly smaller than the corresponding interior angle of ${P}_*$. 
Using Lemma~\ref{lem-3.6}, we conclude 
\[
\ell(\tilde{\gamma}) < \ell(\gamma).
\]

\begin{figure}[ht]
{
\centering
\includegraphics[height=4cm]{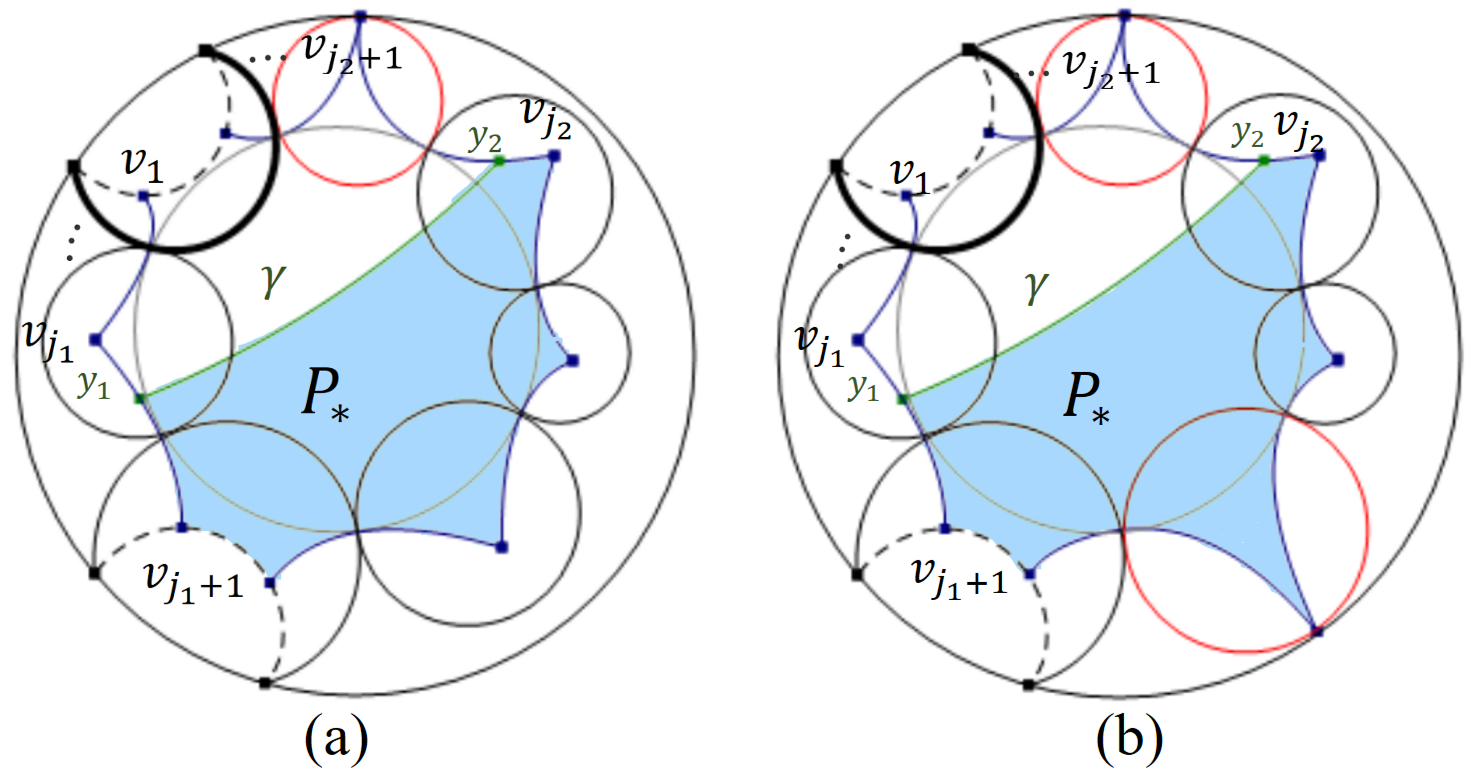} 
\caption{The cases with horocycles.}
\label{Fig15}
}
\end{figure}

\medskip\noindent\textbf{Step 2: The case when all the horocycles lie on the same side of $\gamma$ as $v_1$.}

We now consider configurations containing horocycles. We define the subpolygons $P_*$ and $\widetilde{P}_*$ as in Step 1. In this case, the subpolygon $P_*$ \textit{does not contain} any horocycle center. (see Figure~\ref{Fig15} (a)).

Since $P_*$ does not contain $v_1$, it contains no horocycle center, and the monotonicity follows by the same argument as in Step 1. By Lemma \ref{lem-3.2}, Lemma \ref{lem-3.1} and Lemma \ref{lem-3.6}, we have 
\[\ell(\tilde{\gamma})<\ell(\gamma).\]

\medskip
\noindent\textbf{Step 3: The general case with horocycles on both sides.}
Assume now that horocycles occur on both sides of $\gamma$.  

We define
$P_{*}$ and $\widetilde P_{*}$ as subpolygons not containing the
vertex corresponding to the varying generalized circle $C_{1}$ (see Figure \ref{Fig15} (b)).

Denote by $K=[k_{1},\widetilde k_{1}]$. Clearly, $K\subset (0,+\infty)$.

Approximate each horocycle contained in $\partial P_{*}$ by a sequence of hyperbolic circles with curvature $k_{j}^{m}$ that 
satisfies $k_{j}^{m}\to 1$ as $m\to\infty$. For each $m$, let
\[
\mathcal{P}_{\kappa}^{(m)},\qquad \kappa\in K,
\]
be the resulting generalized circle configuration in which the curvature of
$C_{1}$ is $\kappa$, while all other curvatures are fixed.  Denote by
$\gamma_{\kappa}^{m}$ the corresponding geodesic segment and put
\[
\ell_{m}(k_1):=\ell(\gamma_{\kappa}^{m}).
\]

By Lemma~\ref{lem:uniform-negative-derivative}, there exist constants
$m_{0}\in\mathbb N$ and $c>0$ such that
\[
\frac{d\ell_{m}}{d\kappa}(\kappa)\le -c<0
\text{ for any }
m\ge m_{0}, \text{ and any }\kappa\in K .
\]
Therefore, for $m\ge m_{0}$, the fundamental theorem of calculus gives
\[
\ell_{m}(\widetilde k_{1})-\ell_{m}(k_{1})
=
\int_{k_{1}}^{\widetilde k_{1}}
\frac{d\ell_{m}}{d\kappa}(\kappa)\,d\kappa
\le
-c(\widetilde k_{1}-k_{1})<0 .
\]
By Lemma \ref{lem-3.7}, the geodesic lengths converge when the approximating
generalized circles tend to the corresponding horocycles.  Hence, letting
$m\to\infty$, we obtain the following:
\[
\ell(\widetilde\gamma)-\ell(\gamma)
=
\lim_{m\to\infty}
\bigl(\ell_{m}(\widetilde k_{1})-\ell_{m}(k_{1})\bigr)
\le
-c(\widetilde k_{1}-k_{1})<0 .
\]
Thus,
\[
\ell(\widetilde\gamma)<\ell(\gamma),
\]
which means that the strict monotonicity of the geodesic length is preserved in the
horocyclic limiting case.

\medskip\noindent\textbf{Step 4: Multiple varying curvatures.}

Recall the hypothesis of this lemma: $\tilde{k}_i \ge k_i$ for all $i=1, \dots, n$, and there exists at least one index $m \in \{1, \dots, n\}$ such that $\tilde{k}_m > k_m$.

To rigorously establish the strict decrease in geodesic length when multiple curvatures vary, we construct a finite sequence of intermediate generalized circle configurations $\mathcal{P}_0, \mathcal{P}_1, \dots, \mathcal{P}_n$. 

Let $\mathcal{P}_0 = \mathcal{P}$ be the initial configuration with the curvature vector $\mathbf{k}^{(0)} = (k_1, k_2, \dots, k_n)$. 
For each $j \in \{1, 2, \dots, n\}$, we define the intermediate configuration $\mathcal{P}_j$ associated with the curvature vector $\mathbf{k}^{(j)}$, where the first $j$ components are updated to the new curvatures, and the remaining components remain at their initial values:
$$\mathbf{k}^{(j)} = (\widetilde k_{1}, \dots, \widetilde{k}_j, k_{j+1}, \dots, k_n).$$
Let $\gamma_j$ denote the corresponding geodesic arc in the induced polygon for the configuration $\mathcal{P}_j$. Note that $\mathcal{P}_n = \tilde{\mathcal{P}}$ is the final configuration, so $\gamma_n = \tilde{\gamma}$ and $\gamma_0 = \gamma$.

We analyze the transition from $\mathcal{P}_{j-1}$ to $\mathcal{P}_j$ for each $j = 1, \dots, n$. In this step, only the $j$-th curvature is modified (from $k_j$ to $\tilde{k}_j$), while all other $n-1$ curvatures remain strictly fixed.
\begin{itemize}
    \item If $\tilde{k}_j = k_j$, the curvature vector does not change ($\mathbf{k}^{(j)} = \mathbf{k}^{(j-1)}$). The geometric configuration is identical, thus $\ell(\gamma_j) = \ell(\gamma_{j-1})$.
    \item If $\tilde{k}_j > k_j$, exactly one curvature strictly increases while all others are fixed. This exactly satisfies the conditions analyzed in Steps 1 through 3. Based on the conclusions established in the preceding steps, this strict increase induces a strict decrease in the geodesic length, resulting in $\ell(\gamma_j) < \ell(\gamma_{j-1})$.
\end{itemize}

Since $\tilde{k}_i \ge k_i$ for all $i$, we have a non-increasing sequence of geodesic lengths:
$$\ell(\gamma_n) \le \ell(\gamma_{n-1}) \le \cdots \le \ell(\gamma_1) \le \ell(\gamma_0).$$

Furthermore, the hypothesis guaranties that there is at least one index $m$ where the curvature strictly increases ($\tilde{k}_m > k_m$). Therefore, at least one of the inequalities in the sequence above is strict (i.e., $\ell(\gamma_m) < \ell(\gamma_{m-1})$). 

By the transitivity of inequalities, the presence of at least one strict inequality in the non-increasing chain ensures that the final length is strictly less than the initial length; that is,
$$\ell(\gamma_n) < \ell(\gamma_0).$$
This means
$$\ell(\tilde{\gamma}) < \ell(\gamma).$$
\end{proof}

\begin{remark}\label{outside the induced hyperbolic polygon}
Under the same notation as in Lemma \ref{lem-3.8}, suppose that for $i=1$ or $2$, the radius of the generalized circle $\widetilde{C}_{j_i}$ becomes smaller than the fixed distance from $\tilde{y}_i$ to the adjacent tangent point. Then $\tilde{y}_i$ lies outside the induced hyperbolic polygon $\widetilde{P}_f$, specifically on the extension of the edge $\tilde{e}_{j_i}$ beyond ${v}_{j_i}$ (or ${v}_{j_i+1}$). Even in this case, the geodesic $\tilde{\gamma}$ connecting $\tilde{y}_1$ and $\tilde{y}_{2}$ satisfies 
$$\ell(\tilde{\gamma}) < \ell(\gamma).$$

\begin{figure}[ht]
{
\centering
\includegraphics[height=4cm]{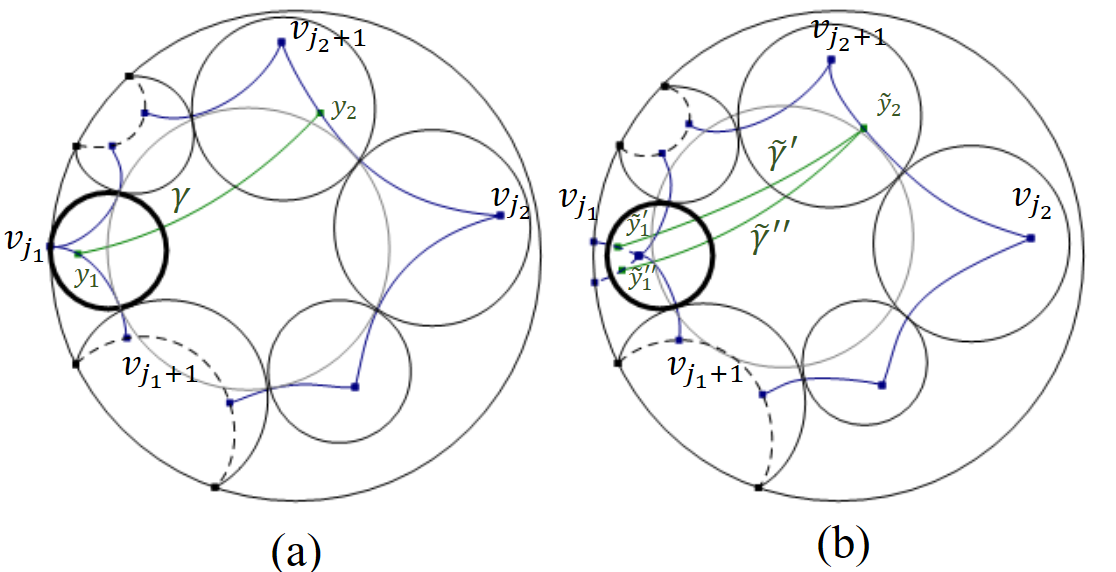} 
\caption{The case outside the induced hyperbolic polygon.}
\label{Fig16}

}
\end{figure}

\end{remark}

\section{Generalized discrete boundary value theorem based on polyhedral decomposition}

\subsection{Limit behavior}
In this section, we consider the limit behavior of the total geodesic curvature vector as the geodesic curvatures of some generalized circles go to $0$ or $\infty$, and the size of the set $\mathcal{T}$ used in Theorem \ref{theorem-1.1}.

Set $\textbf{c}=\left(c_1, \cdots, c_n\right) \in[0,+\infty]^n$ as a non-negative vector (possible infinity). For a generalized circle configuration $\{C_i\}_{i=1}^n$
associated with a topological polygon $f$ with a given geodesic curvature vector $\textbf{k}$, we consider the limiting behavior of the total geodesic curvature of each side of the interstice $\Omega_f$ bounded by this circle configuration as 
$$
\textbf{k}=(k_1,\cdots,k_n) \rightarrow \left(c_1, \cdots, c_n\right)=\textbf{c}.
$$
Denote by $T_{j,P}$ the total geodesic curvature of the side of $\Omega_f$ on the generalized circle centered at $v_j$. The following two lemmas are proved in \cite{MR4842734}. 
\begin{lemma}[\cite{MR4842734} Lemma 3.2]\label{lem-4.1}
If $c_j=0$ for some $1 \leq j \leq n$, then $$\lim _{\textbf{k}\rightarrow \textbf{c}} T_{j, P}(\textbf{k})=0.$$
\end{lemma}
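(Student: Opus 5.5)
The plan is to express $T_{j,P}$ explicitly through the quadrilateral formula of Lemma~\ref{lem-3.1}. Then we bound the inner angle $\beta_j$ using only the crude information $k_P>1$, which holds for every generalized circle configuration (Lemma~\ref{lem-3.2}). No control of the other curvatures $k_i$, $i\neq j$, should be needed, since they may tend to $0$ or $\infty$ as well.

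\textbf{Step 1: reduce to hypercycles.} Since $c_j=0$, for $\textbf{k}$ close enough to $\textbf{c}$ we have $0<k_j<1$, so $C_j$ is a hypercycle with $k_j=\tanh r_j$. The side of $\Omega_f$ lying on $C_j$ is the arc of $C_j$ between the two tangent points with its neighbours. By (H4) both tangent points lie on the dual circle $C_P$, which meets $C_j$ perpendicularly. So this side is exactly the arc in the quadrilateral of Figure~\ref{Fig6}, and its inner angle is the quantity $\beta_j=\beta(k_j,k_P)$ of Lemma~\ref{lem-3.1}. By Table~\ref{tab-1},
\[
T_{j,P}=\beta_j\sinh r_j=\beta_j\,\frac{k_j}{\sqrt{1-k_j^2}} .
\]

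\textbf{Step 2: bound $\beta_j$ independently of $k_P$.} From (\ref{equa-7}) we have $\coth(\beta_j/2)=k_P/\sqrt{1-k_j^2}$. Since $k_P>1$, this gives $\coth(\beta_j/2)>1/\sqrt{1-k_j^2}$, that is, $\tanh(\beta_j/2)<\sqrt{1-k_j^2}$. Hence
\[
\beta_j<2\,\operatorname{arctanh}\sqrt{1-k_j^2}=\log\frac{1+\sqrt{1-k_j^2}}{1-\sqrt{1-k_j^2}}=2\log\frac{1+\sqrt{1-k_j^2}}{k_j}\le 2\log\frac{2}{k_j}.
\]
The point is that $\beta_j$ does blow up as $k_j\to0$, but only logarithmically.

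\textbf{Step 3: conclude.} Combining the two steps gives
\[
0<T_{j,P}(\textbf{k})\le \frac{2k_j\log(2/k_j)}{\sqrt{1-k_j^2}},
\]
and the right-hand side tends to $0$ as $k_j\to0$, because $k\log(1/k)\to0$. The bound is uniform in all other entries of $\textbf{k}$, so $\lim_{\textbf{k}\to\textbf{c}}T_{j,P}(\textbf{k})=0$.

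The only delicate point is Step 2. A naive estimate of $\beta_j$ using a lower bound of $k_P$ away from $1$ is unavailable, since $k_P$ may approach $1$ under the limit. The inequality must therefore use only $k_P>1$, and one has to check that the resulting logarithmic growth of $\beta_j$ is beaten by the factor $\sinh r_j\sim k_j$.
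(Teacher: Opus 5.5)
Your proof is correct: the identification $T_{j,P}=\beta_j\sinh r_j$ with $\coth(\beta_j/2)=k_P/\sqrt{1-k_j^2}$, the use of nothing beyond $k_P>1$, and the resulting bound $T_{j,P}<2\sinh r_j\,\operatorname{arccoth}(\cosh r_j)\le 2k_j\log(2/k_j)/\sqrt{1-k_j^2}$, uniform in all other curvatures, all check out. The paper does not reprove this lemma (it is quoted from \cite{MR4842734}), but your Step 2 is exactly the hypercycle-case estimate the paper uses in the proof of Theorem~\ref{theorem-5.1} to show $T_{v,f}<\pi$; your only addition is the observation that this same bound tends to $0$ as $k_j\to 0$.
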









\begin{lemma}[\cite{MR4842734} Lemma 3.3]\label{lem-4.2}
Let $I=\{j:\;c_j=\infty\}$ and $|I|$ denote the number of elements of $I$. Then the following two properties hold.  

(i) If $|I|<n-2$, then $\lim _{\textbf{k}\rightarrow \textbf{c}} T_{j, P}(\textbf{k})=\pi$ for each $j\in I$; 

(ii) If $n-2\leq|I|\leq n$, then $\lim _{\textbf{k}\rightarrow \textbf{c}} \sum_{j=1}^nT_{j, P}(\textbf{k})=(n-2)\pi$.
\end{lemma}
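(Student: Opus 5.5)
Lemma \ref{lem-4.2} is the statement to prove; part (i) is handled directly and part (ii) through the angle sum of the induced polygon.

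\emph{Normalization.} We rewrite each $T_{j,P}$ in terms of the dual circle. By Lemma \ref{lem-3.1} and the table of Section \ref{sec-2.1}, the edge of $\Omega_f$ on $C_j$ is an arc of $C_j$ cut out by the dual circle $C_P$. Its total geodesic curvature is then an explicit function of $k_j$ and $k_P$:
\begin{itemize}
\item if $C_j$ is a circle, $T_{j,P}=\alpha_j\cosh r_j$ with $\cot(\alpha_j/2)=k_P/\sqrt{k_j^2-1}$;
\item if $C_j$ is a hypercycle, the analogous formula holds with $\beta_j$ and $\sinh r_j$;
\item if $C_j$ is a horocycle, the corresponding limit formula holds.
\end{itemize}
So every $T_{j,P}$ is controlled by $k_j$ and $k_P$ alone, and the plan reduces to understanding how $k_P$ behaves as $\textbf{k}\to\textbf{c}$. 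That behaviour is governed by the angle equation $\sum_i\theta_i(k_i,k_P)=2\pi$ from Lemma \ref{lem-3.2}, which also admits the polynomial form (\ref{equ-6}).

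\emph{Part (i).} For $j\in I$ we have $k_j\to\infty$, so $r_j\to 0$ and $C_j$ shrinks to its centre.
\begin{itemize}
\item \textbf{Bounded case.} If $k_P$ stays bounded, then $\alpha_j\approx 2\arctan(k_j/k_P)\to\pi$. Also $\cosh r_j=k_j/\sqrt{k_j^2-1}\to1$. Hence $T_{j,P}\to\pi$.
\item \textbf{Growth of $k_P$.} The key step is to show $k_P/k_j\to 0$ for every $j\in I$ when $|I|<n-2$. We use the angle sum: each $\theta_i$ is a decreasing function of $k_i$ (Lemma \ref{lem-3.1}(b)). If $k_P$ were comparable to some $k_j$, say $k_P\ge\epsilon k_{j_0}$ along a subsequence, then the at least three circles with bounded curvature would contribute angles $\theta_i$ close to $\pi$ each. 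For a fixed $k_i$, as $k_P\to\infty$ we have $\theta_i\to\pi$; this follows from $\tan(\theta_i/2)\approx k_P/k_i$, by the same quadrilateral formula with the roles of the two circles exchanged. Three such contributions already give a total exceeding $2\pi$, which contradicts $\sum\theta_i=2\pi$.
\end{itemize}
This is why the threshold $n-2$ appears. We will make it quantitative using $\tan(\theta_i/2)=k_P/k_i$, equivalently $\cot(\theta_i/2)=k_i/k_P$, which is obtained from the four-part formula.

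\emph{Part (ii).} Here we sum all the pieces. The region $\Omega_f$ is bounded by the $n$ arcs, and at each tangent point the two arcs are tangent, so each exterior turning angle there is $\pi$. Gauss--Bonnet for the hyperbolic region $\Omega_f$ then gives
\[
\mathrm{Area}(\Omega_f)+\sum_j T_{j,P}=\text{(turning at the cusps)},
\]
that is, with the sign of each geodesic curvature fixed by convexity toward $\Omega_f$,
\[
\sum_j T_{j,P}=(n-2)\pi-\mathrm{Area}(\Omega_f)\quad\text{up to sign conventions}.
\]
It therefore suffices to show $\mathrm{Area}(\Omega_f)\to0$ when $|I|\ge n-2$. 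The interstice is contained in the disk bounded by $C_P$, so it suffices to show $k_P\to\infty$. With at most two bounded curvatures, the two angles $\theta_i$ coming from bounded $k_i$ sum to less than $2\pi$ for any finite $k_P$. The remaining $\theta_j$ tend to $0$ unless $k_P$ grows comparably to $k_j$. The angle equation therefore forces $k_P\to\infty$. Once $k_P\to\infty$, the radius of $C_P$ tends to $0$, so $\mathrm{Area}(\Omega_f)\to0$ and the limit $(n-2)\pi$ follows.

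\emph{Main obstacle.} The hardest step is the asymptotic analysis of $k_P$ from the implicit equation, uniformly over all ways $\textbf{k}\to\textbf{c}$, including mixed rates of divergence among the indices in $I$. I would first try a subsequence argument on the ratios $k_j/k_P\in[0,\infty]$ using $\cot(\theta_i/2)=k_i/k_P$ (Lemma \ref{lem-3.1}). Passing to limits in $\sum\theta_i=2\pi$ should give
\[
\sum_i 2\,\mathrm{arccot}(\lim k_i/k_P)=2\pi,
\]
and from this both case conclusions are read off.
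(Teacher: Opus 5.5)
\textbf{Gap in part (ii).} Your part (i) is sound. At least three indices lie outside $I$, and $k_P\to\infty$ would push each of their $\theta_i$ to $\pi$. So $k_P$ stays bounded, hence $k_P/k_j\to0$ and $T_{j,P}\to\pi$ for $j\in I$.

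The gap is your claim that the angle equation forces $k_P\to\infty$ once $|I|\ge n-2$. This is false when $|I|=n-2$ and both remaining curvatures tend to $0$. Take $n=3$ and use the formula of Appendix \ref{sec-7.1}, $k_P^2=1+k_1k_2+k_1k_3+k_2k_3$. With $k_1=M\to\infty$ and $k_2=k_3=1/M$ it gives $k_P^2=3+M^{-2}\to3$. For general $n$, taking $k_j=M$ on $I$ and $k_i=1/M$ off $I$ gives $k_P^2\to n/(n-2)$. In this regime the dual circle keeps a definite size, so the inclusion of $\Omega_f$ in the disk bounded by $C_P$ gives no decay of the area.

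The sentence that fails is ``the two angles coming from bounded $k_i$ sum to less than $2\pi$ for any finite $k_P$''. Write $w=\sqrt{k_P^2-1}$ and $\theta_i=\pi-2\arctan(k_i/w)$. For $|I|=n-2$ the angle equation is then exactly
\[
\sum_{i\notin I}\arctan\frac{k_i}{w}=\sum_{j\in I}\arctan\frac{w}{k_j}.
\]
The left side, which is half the deficit you rely on, has no uniform positive lower bound. It can tend to $0$ at the same rate as the right side while $w$ stays bounded. Even in the sub-cases where $k_P\to\infty$ does hold (e.g.\ some $c_i>0$ with $i\notin I$), you need such a uniform lower bound, not a pointwise strict inequality.

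\textbf{How to repair (ii).} The conclusion of (ii) is still true; argue by a subsequence dichotomy on $k_P$.
\begin{itemize}
\item Along a subsequence with $k_P\to\infty$, your Gauss--Bonnet identity works together with $\operatorname{Area}(\Omega_f)\le 2\pi(\cosh r_P-1)\to0$.
\item Along a subsequence with $k_P$ bounded, $k_P/k_j\to0$ for every $j\in I$, so $T_{j,P}\to\pi$ exactly as in your part (i). All $T_{i,P}>0$, and Gauss--Bonnet gives $\sum_jT_{j,P}<(n-2)\pi$. Hence
\[
(n-2)\pi\le|I|\pi\le\liminf\sum_jT_{j,P}\le\limsup\sum_jT_{j,P}\le(n-2)\pi .
\]
\end{itemize}
Every subsequence has a further subsequence of one of these two kinds, so the full limit is $(n-2)\pi$. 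As a by-product, bounded $k_P$ can occur only when $|I|=n-2$.

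\textbf{A smaller error.} The relation you use, $\cot(\theta_i/2)=k_i/k_P$, is only asymptotic; the exact one is $\cot(\theta_i/2)=k_i/\sqrt{k_P^2-1}$. Your proposed limiting equation $\sum_i2\operatorname{arccot}(\lim k_i/k_P)=2\pi$ is therefore false whenever $k_P$ stays bounded. For example, with $n=3$ and $k_1=k_2=k_3\to0$ one has $k_P\to1$, and the left side tends to $3\pi$. The equation must be written with $\sqrt{k_P^2-1}$ in place of $k_P$.
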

The following figure illustrates the two conclusions given in the previous lemma \ref{lem-4.2}.
\begin{figure}[ht]
{
\centering
\includegraphics[height=3cm]{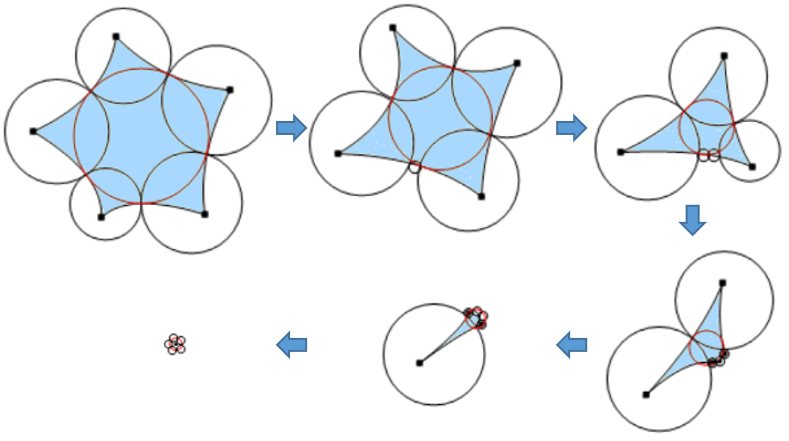} 
\caption{From a circle configuration to a similar one in a small scale.}
\label{Fig17}
}
\end{figure}






\subsection{Discrete boundary value theorem}

\begin{theorem}\label{theorem-5.1}
Let $S_{g,n}$ be a compact oriented topological surface with finitely many genus $g\geq 0$ and finitely many holes $n\geq 1$ and let $\mathcal{D}$ be a polygonal cellular decomposition of $S_{g,n}$. Let $G=\{V,E,F\}$ be the $1$-skeleton of $\mathcal{D}$, where $V=V^{\partial}\cup V^{\circ}$. Assume that  $\mathcal{P}$ is a generalized circle on $S_{g,n}$ with $G$ as a contact graph. Denote by $\textbf{T}(v_i)$ the total geodesic curvature of a conical generalized circle centered at an interior vertex $v_i\in V^{\circ}$. Then
$$
\textbf{T}:V^{\circ}\longrightarrow \mathbb{R}_{+}:
v_i\longmapsto \textbf{T}(v_i)
$$
belongs to $\mathcal{T}$, where $\mathcal{T}$ is defined by Condition (\ref{equa-1}).
\end{theorem}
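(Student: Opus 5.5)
We prove the necessity direction: the total geodesic curvature vector of any generalized circle packing with contact graph $G$ satisfies the strict inequalities defining $\mathcal{T}$. The plan is to decompose each $\textbf{T}(v)$ into face contributions and bound the contributions face by face. For an interior vertex $v$, the generalized circle $C_v$ is cut by the faces around $v$ into pieces, and each piece is the side of an interstice $\Omega_f$ lying on $C_v$. Since the total geodesic curvature is additive along arcs, we get
\[
\textbf{T}(v)=\sum_{f\ni v} T_{v,P}(f),
\]
where $T_{v,P}(f)$ is the total geodesic curvature of the side of $\Omega_f$ on $C_v$. Summing over $v\in Q$ and exchanging the order of summation gives
\[
\sum_{v\in Q}T_v=\sum_{f\in F_Q}\ \sum_{v\in V(f)\cap Q}T_{v,P}(f).
\]
It therefore suffices to prove, for each face $f$ and each nonempty $Q'=V(f)\cap Q$, the facewise bound
\[
\sum_{v\in Q'}T_{v,P}(f)<\pi\min\{N(f,Q),N(f)-2\}.
\]

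The facewise bound follows from two estimates.

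First, each single side satisfies $T_{j,P}<\pi$. The side of $\Omega_f$ on $C_j$ has total geodesic curvature equal to $k_j$ times its length. By Gauss--Bonnet on the region cut from the sector or generalized sector between the two tangent points, this is strictly less than $\pi$. Equivalently, from Lemma \ref{lem-3.3-1}(c) the quantity $T_{j,P}$ increases in $k_j$, and by Lemma \ref{lem-4.2}(i) its supremum as $k_j\to\infty$ with the others fixed is $\pi$; strict monotonicity then gives $T_{j,P}<\pi$. Summing over $Q'$ gives the bound $\pi N(f,Q)$.

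Second, the total satisfies $\sum_{j=1}^{n}T_{j,P}<(n-2)\pi$. Apply Gauss--Bonnet to the interstice $\Omega_f$, a hyperbolic region bounded by $n$ curved arcs meeting at the tangent points with exterior angles $\pi$ (cusps). This gives
\[
\sum_j T_{j,P}+\sum(\text{exterior angles})-\mathrm{Area}(\Omega_f)=2\pi,
\]
and hence
\[
\sum_j T_{j,P}=2\pi-n\pi+\ldots
\]
The signs need care here: at the cusps the boundary turns by $-\pi$ relative to the interior, because the arcs are concave toward $\Omega_f$. With geodesic curvature counted negatively for the concave arcs, the identity becomes
\[
\sum_j T_{j,P}=(n-2)\pi-\mathrm{Area}(\Omega_f)<(n-2)\pi.
\]
Alternatively, we can avoid the sign bookkeeping by using Lemma \ref{lem-3.3-1}(a) (the sum increases in each $k_i$) together with Lemma \ref{lem-4.2}(ii) (the limit as all $k_i\to\infty$ is $(n-2)\pi$). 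Since all $T_{j,P}>0$, the partial sum over $Q'$ is at most the full sum and hence strictly less than $(n-2)\pi$.

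Combining the two estimates gives the facewise bound, and summing over $f\in F_Q$ yields the inequality in $\mathcal{T}$ for every nonempty $Q\subset V^{\circ}$. Positivity of $\textbf{T}$ is immediate.

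The main obstacle is making the two strict facewise bounds rigorous when some circles are horocycles or hypercycles. The cleanest route is the Gauss--Bonnet identity on $\Omega_f$, which holds uniformly for all three types because $\Omega_f$ is a compact region with cusp corners. I would verify the sign convention on a regular configuration before relying on it. If the monotonicity-plus-limit route is used instead, it needs an argument that the supremum is not attained, and this comes from the strict positivity of the partial derivatives in Lemma \ref{lem-3.3-1}.
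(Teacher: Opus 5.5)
Your proposal is correct and follows the same skeleton as the paper's proof. You split $\textbf{T}(v)=\sum_{f\ni v}T_{v,f}$, show $0<T_{v,f}<\pi$, and use Gauss--Bonnet on the interstice to get $\sum_{v\in V(f)}T_{v,f}=\pi(N(f)-2)-\mathrm{Area}(\Omega_f)$. You then take the minimum face by face and sum over $F_Q$.

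The one genuine difference is how you prove the single-arc bound $T_{v,f}<\pi$.
\begin{itemize}
\item The paper uses the closed-form expressions for $T_{v,f}$ in terms of $k_v$ and $k_{P_f}>1$. For a circle of radius $r$ this reads $T_{v,f}=2\cosh r\,\mathrm{arccot}(k_{P_f}\sinh r)<2\cosh r\,\mathrm{arccot}(\sinh r)<\pi$, which is Jordan's inequality in disguise. For horocycles and hypercycles one even gets $T_{v,f}<2$. This argument is self-contained and quantitative.
\item You instead combine strict monotonicity in $k_v$ (Lemma~\ref{lem-3.3-1}(c)) with the limiting value $\pi$ from Lemma~\ref{lem-4.2}(i). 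This is shorter and avoids computation, but it rests on two imported results.
\item Lemma~\ref{lem-4.2}(i) with $|I|=1$ requires $N(f)\ge 4$. This is harmless: for a triangle, $\min\{N(f,Q),N(f)-2\}=1$, so your second estimate already suffices.
\end{itemize}

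Your geometric aside, Gauss--Bonnet on the sector between the two tangent points, does not by itself give the bound. It only yields $T_{v,f}=\alpha_v+\mathrm{Area}(\text{sector})$; for a circle this is $\alpha\cosh r=\alpha+\alpha(\cosh r-1)$, with no control on the area. You can complete it as follows.
\begin{itemize}
\item The sector lies inside the quadrilateral whose vertices are $v$, the two tangent points, and the center of the dual circle.
\item This quadrilateral has right angles at the tangent points, so its area is $\pi-\alpha_v-\theta_v$.
\item Hence $T_{v,f}<\pi-\theta_v$, and the same holds for horocycles and hypercycles, with a pentagon in the hypercycle case.
\item Summing over $V(f)$ and using $\sum_v\theta_v=2\pi$ even recovers the face bound without a separate Gauss--Bonnet computation.
\end{itemize}

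Finally, on the sign question you raised: at each cusp of $\partial\Omega_f$ the interior angle is $0$, so the turning angle is $+\pi$, not $-\pi$. Only the geodesic curvature of the concave arcs carries a minus sign. With that convention your identity $\sum_v T_{v,f}=(N(f)-2)\pi-\mathrm{Area}(\Omega_f)$ is correct.
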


\begin{proof}
For each face \(f\in F\), let \(\Omega_f\) be the interstice
corresponding to \(f\). For \(v\in V(f)\), denote by \(T_{v,f}\) the
total geodesic curvature of the side of \(\partial\Omega_f\) which lies
on the generalized circle centered at \(v\). Then, for every interior
vertex \(v\in V^\circ\),
\[
    \textbf{T}(v)=\sum_{\substack{f\in F\\ v\in V(f)}} T_{v,f}.
\]

We first record a local estimate. For every face \(f\in F\) and every
\(v\in V(f)\),
\begin{equation}\label{equa-15}
    0<T_{v,f}<\pi .
\end{equation}

Indeed, let \(k_v\) be the geodesic curvature of the generalized circle
centered at \(v\), and let \(k_{P_f}>1\) be the geodesic curvature of
the dual circle associated with the generalized circle configuration over
\(f\). By the elementary right-quadrilateral formulas for the three types
of generalized circles, we have
\[
T_{v,f}=
\begin{cases}
\displaystyle
\frac{2k_v}{\sqrt{k_v^2-1}}\,
\text{arccot}\left(\frac{k_{P_f}}{\sqrt{k_v^2-1}}\right),
& k_v>1,\\[1.2em]
\displaystyle
\frac{2}{k_{P_f}},
& k_v=1,\\[1.2em]
\displaystyle
\frac{2k_v}{\sqrt{1-k_v^2}}\,
\operatorname{arccoth}\!\left(\frac{k_{P_f}}{\sqrt{1-k_v^2}}\right),
& 0<k_v<1.
\end{cases}
\]
These quantities are positive. Moreover, since \(k_{P_f}>1\), each of
them is strictly less than \(\pi\). More explicitly, in the circle case,
writing \(k_v=\coth r\), we obtain
\[
T_{v,f}
<
2\cosh r\cdot\text{arccot}(\sinh r)
<\pi,
\]
where the last inequality follows from
\[
\arctan x<\frac{\pi x}{2\sqrt{1+x^2}},\qquad x>0.
\]
In the hypercycle case, writing \(k_v=\tanh r\), we obtain
\[
T_{v,f}
<
2\sinh r\,\operatorname{arccoth}(\cosh r)
<2<\pi,
\]
where we use
\[
\operatorname{arccoth} x<\frac{1}{\sqrt{x^2-1}},\qquad x>1.
\]
The horocycle case gives \(T_{v,f}=2/k_{P_f}<2<\pi\). Thus (\ref{equa-15})
holds.

Next, let \(f\in F\) and put \(n_f=N(f)=\#V(f)\). Applying the
Gauss--Bonnet formula to the interstice \(\Omega_f\), whose boundary is
formed by the arcs of the generalized circles corresponding to the
vertices of \(f\), gives
\[
    \sum_{v\in V(f)} T_{v,f}
    =
    \pi(n_f-2)-\operatorname{Area}(\Omega_f).
\]
Since \(\operatorname{Area}(\Omega_f)>0\), we have
\begin{equation}\label{equa-16}
    \sum_{v\in V(f)} T_{v,f}
    <
    \pi\bigl(N(f)-2\bigr).
\end{equation}

Now let \(Q\subset V^\circ\) be any nonempty subset. We need to prove
the defining inequality of \(\mathcal T\). Using the decomposition of
\(\textbf{T}(v)\) into face-wise contributions, we have
\[
\begin{aligned}
    \sum_{v\in Q} T(v)
    &=
    \sum_{v\in Q}
    \sum_{\substack{f\in F\\ v\in V(f)}} T_{v,f}      \\
    &=
    \sum_{f\in F_Q}
    \sum_{v\in Q\cap V(f)} T_{v,f}.
\end{aligned}
\]
Fix \(f\in F_Q\). By (\ref{equa-15}),
\begin{equation}\label{equa-17}
    \sum_{v\in Q\cap V(f)} T_{v,f}
    <
    \pi\,\#\bigl(Q\cap V(f)\bigr)
    =
    \pi N(f,Q).
\end{equation}
On the other hand, since all \(T_{v,f}\) are positive, (\ref{equa-16}) implies
\begin{equation}\label{equa-18}
    \sum_{v\in Q\cap V(f)} T_{v,f}
    \le
    \sum_{v\in V(f)} T_{v,f}
    <
    \pi\bigl(N(f)-2\bigr).
\end{equation}
Combining (\ref{equa-17}) and (\ref{equa-18}), we obtain
\[
    \sum_{v\in Q\cap V(f)} T_{v,f}
    <
    \pi\min\{N(f,Q),\,N(f)-2\}.
\]
Summing this inequality over all \(f\in F_Q\), we get
\[
\begin{aligned}
    \sum_{v\in Q} T(v)
    &=
    \sum_{f\in F_Q}
    \sum_{v\in Q\cap V(f)} T_{v,f}        \\
    &<
    \sum_{f\in F_Q}
    \pi\min\{N(f,Q),\,N(f)-2\}.
\end{aligned}
\]
This is precisely Condition \((1)\). Therefore \(T\in\mathcal T\).
\end{proof}

\begin{theorem}\label{theorem-5.2}
    Let $S_{g,n}$ and $G$ be defined as above. Given $\hat{\textbf{k}}:V^{\partial}\rightarrow \mathbb{R}_{+}$ and $\hat{\textbf{T}}:V^{\circ}\rightarrow \mathbb{R}_{+}$. If $\hat{\textbf{T}}\in \mathcal{T}$, there exists a unique generalized circle packing on $S_{g,n}$, up to isometry,
with geodesic curvature vector \(\textbf{k}:V\to\mathbb R_+\) satisfying:
    \begin{itemize}
    \item[(i)] For any boundary vertex $w\in V^{\partial}$, the generalized circle satisfies $\textbf{k}(w)=\hat{\textbf{k}}(w)$;
    \item [(ii)] For any interior vertex $v_0\in V^{\circ}$, the total geodesic curvature $\textbf{T}_{\textbf{k}}(v_0)$ satisfies $\textbf{T}_{\textbf{k}}(v_0)=\hat{\textbf{T}}(v_0)$.
    \end{itemize}
\end{theorem}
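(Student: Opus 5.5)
Uniqueness comes from a maximum principle, and existence from the Perron method applied to the interior coordinates $\textbf{k}|_{V^\circ}$, with the boundary values frozen at $\hat{\textbf{k}}$. Throughout, $T_v$ denotes the total geodesic curvature at $v$ as a function of all curvatures. By Lemma~\ref{lem-3.3}, $T_v$ is strictly increasing in $k_v$ and strictly decreasing in each neighboring curvature.

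\emph{Uniqueness.} Let $\textbf{k}$ and $\textbf{k}'$ be two solutions and suppose $A=\{v\in V^\circ: k_v>k'_v\}$ is nonempty. Since $\textbf{k}=\textbf{k}'$ on $V^\partial$, we have $A\subset V^\circ$. Sum the face contributions $T_{v,f}$ over $v\in A$ and $f\in F_A$, and compare the two packings face by face using Lemma~\ref{lem-3.3-1}:
\begin{itemize}
\item on a face $f$ with $V(f)\subset A$, part (a) gives a strict increase of $\sum_{v\in V(f)}T_{v,f}$;
\item on a face with $V(f)\not\subset A$, we interpolate from $\textbf{k}'$ to $\max(\textbf{k},\textbf{k}')$ to $\textbf{k}$. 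Raising the coordinates in $A$ raises $\sum_{v\in A\cap V(f)}T_{v,f}$, by (a) together with (b) applied to the complementary vertices. Lowering the coordinates outside $A$ raises each $T_{v,f}$ with $v\in A$, by (b).
\end{itemize}
Hence $\sum_{v\in A}T_v(\textbf{k})>\sum_{v\in A}T_v(\textbf{k}')$, which contradicts $T_v(\textbf{k})=T_v(\textbf{k}')=\hat T_v$ for all $v\in A$. By symmetry $\textbf{k}=\textbf{k}'$. Equivalently, one can run the standard argument at a vertex maximizing $k_v/k'_v$, but the summed version avoids delicate ratio arguments.

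\emph{Existence by Perron.} Call $\textbf{k}$ a \emph{superpacking} if $\textbf{k}=\hat{\textbf{k}}$ on $V^\partial$ and $T_v(\textbf{k})\ge\hat T_v$ for all $v\in V^\circ$. The plan has four steps.
\begin{enumerate}
\item \emph{Nonemptiness.} Take all interior $k_v$ very large. By Lemma~\ref{lem-4.2}, on a face with few vertices going to infinity, $T_{v,f}\to\pi$; on a face where almost all vertices go to infinity, the face sum tends to $(N(f)-2)\pi$. Since $\hat{\textbf{T}}$ is bounded, I expect $T_v\to\sum_f(\text{positive})$ to exceed $\hat T_v$, but this is delicate. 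The safer route is to grow one vertex at a time: $T_v\to\infty$ as $k_v\to\infty$ with the neighbors fixed, since the arc's total curvature blows up. The point is that Lemma~\ref{lem-3.3} shows raising $k_v$ only lowers the neighbors' totals, so a joint scaling is needed.

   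I will instead show the family is nonempty via a min-type argument: superpackings are closed under coordinatewise minimum. If $k_v=\min(k_v^1,k_v^2)=k^1_v$, then the neighbors of $v$ are $\le$ those in $\textbf{k}^1$, and by monotonicity $T_v\ge T_v(\textbf{k}^1)\ge\hat T_v$.
\item Define $k^*_v=\inf\{k_v:\textbf{k}\text{ a superpacking}\}$.
\item \emph{Bounds on the infimum.} Show $k^*_v>0$ for every interior $v$, using Lemma~\ref{lem-4.1}: if $k_v\to0$ then $T_v\to0<\hat T_v$. This has to be made uniform, because neighbors could also collapse. Let $Q$ be the set of vertices where a minimizing sequence of superpackings tends to $0$. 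Summing over $Q$ and using Lemmas~\ref{lem-4.1} and~\ref{lem-4.2}, the total $\sum_{Q}T_v$ is asymptotically bounded above by $\sum_{f\in F_Q}\pi\min\{N(f,Q),N(f)-2\}$, which contradicts $\hat{\textbf{T}}\in\mathcal T$. Since vertices going to $0$ have $T\to0$, this inequality needs care: the relevant degeneration is really $k\to\infty$ (circles shrinking), so I would apply the $\mathcal T$ condition to the set where curvatures blow up. That set is where the supremum problem for subpackings fails.

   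I will therefore set the argument up symmetrically. Finiteness of $k^*$ uses the superpacking constructed in step 1. Positivity uses Lemma~\ref{lem-4.1} applied vertexwise together with the lower bound from the neighbors, which is plausible because small neighbor curvature only increases $T_v$.
\item \emph{The infimum is a solution.} By continuity (Lemma~\ref{lem-3.2}), $\textbf{k}^*$ is a superpacking. If $T_v(\textbf{k}^*)>\hat T_v$ at some $v$, then decreasing $k_v$ slightly keeps $v$ super and raises the neighbors' totals. This gives a smaller superpacking and contradicts minimality.
\end{enumerate}

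\emph{Main obstacle.} The hard step is producing a superpacking, or dually controlling sequences where a set $Q$ of interior curvatures tends to $\infty$. This is exactly where $\hat{\textbf{T}}\in\mathcal T$ is used. I would try the following. Suppose no superpacking has bounded interior data. Take a sequence in which the curvatures on $Q$ tend to $\infty$ while $T_v<\hat T_v$ on $Q$. By Lemma~\ref{lem-4.2}, $\sum_{v\in Q}T_v\to\sum_{f\in F_Q}\pi\min\{N(f,Q),N(f)-2\}$. This exceeds $\sum_Q\hat T_v$ by the defining condition \eqref{equa-1}, which is a contradiction.

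To make this rigorous, I would run the construction inductively: first choose all interior curvatures equal to a large constant $R$, then compute the limit as $R\to\infty$ via Lemma~\ref{lem-4.2}, taking $Q$ to be all of $V^\circ$ and its subsets. The expected output is a definite superpacking, after which the Perron argument above closes.
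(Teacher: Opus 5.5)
\paragraph{Uniqueness.} Your uniqueness argument is correct and takes a different route from the paper's. You compare over $A=\{v:k_v>k'_v\}$, face by face along $\textbf{k}'\to\max(\textbf{k},\textbf{k}')\to\textbf{k}$, using Lemma~\ref{lem-3.3-1}(a),(b). This is a self-contained discrete maximum principle that does not depend on existence. Run with $\textbf{k}\le\textbf{k}'$ on $V^{\partial}$, it even gives Theorem~\ref{theorem-6.1}(a) directly. The paper instead gets uniqueness from the Perron extremality $\tau_1\ge\tau_1'$ plus a Gauss--Bonnet comparison of total interstice areas.

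\paragraph{The gap in existence.} Your infimum scheme rests on the family of superpackings being nonempty, and you never produce one. None of the suggested routes works:
\begin{itemize}
\item Closure under coordinatewise minimum says nothing about nonemptiness.
\item The claim that $T_v\to\infty$ as $k_v\to\infty$ is false. By \eqref{equa-15}, $0<T_{v,f}<\pi$, so $T_v$ is bounded by $\pi d_v$, where $d_v$ is the number of faces at $v$. Moreover, raising a single $k_v$, even if it lifts $T_v$ past $\hat T_v$, lowers the totals at all neighbors by Lemma~\ref{lem-3.3}(b).
\item The uniform choice $k_v=R\to\infty$ also fails. In the regime $|I|\ge N(f)-2$, Lemma~\ref{lem-4.2} controls only the face sum, not the individual $T_{v,f}$. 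Suppose every face at an interior $v$ is a triangle with interior vertices. Then symmetry forces $T_{v,f}\to\pi/3$, so $T_v\to\pi d_v/3$. Yet $\hat{\textbf{T}}\in\mathcal{T}$ permits $\hat T_v$ arbitrarily close to $\pi d_v$: make all other prescribed values tiny.
\item Your ``main obstacle'' contradiction uses a sequence whose curvatures on $Q$ blow up while $T_v<\hat T_v$ on $Q$. That is an argument about \emph{subpackings}. Assuming that no superpacking exists never produces such a sequence.
\end{itemize}
In the infimum scheme it is positivity that is easy: it follows directly from Lemma~\ref{lem-4.1}, with no need for $\mathcal{T}$.

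\paragraph{The repair.} Run Perron from below, as the paper does.
\begin{itemize}
\item The set $\mathcal{K}_1$ of packings with $T_v\le\hat T_v$ on $V^{\circ}$ is nonempty by Lemma~\ref{lem-4.1}: take the interior curvatures small.
\item It is closed under coordinatewise maximum by Lemma~\ref{lem-3.3}(b).
\item Its supremum is finite by exactly your Lemma~\ref{lem-4.2} argument combined with \eqref{equa-1}. This is now legitimate, since $T_v\le\hat T_v$ holds along the approximating sequence.
\item If $T_v<\hat T_v$ at the supremum, raising $k_v$ alone keeps the packing in $\mathcal{K}_1$ and contradicts maximality.
\end{itemize}
The condition $\hat{\textbf{T}}\in\mathcal{T}$ is naturally an upper bound on the curvatures of subpackings. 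In your superpacking scheme it would instead have to be converted into the construction of one explicit superpacking, which is essentially as hard as the theorem itself.
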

\begin{proof}
We apply the Perron method to prove this result. This idea comes from \cite{MR1115988}. Each geodesic curvature vector $\textbf{k}:V\rightarrow\mathbb{R}_{+}$ determines a unique generalized circle packing on $S_{g,n}$ with the contact graph $G$, which we briefly call a generalized circle packing $\textbf{k}$ for $G$. Let 
\begin{equation}\label{equa-34}
\mathcal{K}_1=\left\{\textbf{k}:V\rightarrow\mathbb{R}_{+}\begin{array}{|l}\textbf{k}\text{ is a generalized circle packing for $G$ with}\\
\text{$\textbf{T}_{\textbf{k}}(v)\leq\hat{\textbf{T}}(v) \text{ for any } v\in V^{\circ}$ and prescribed}\\
\text{boundary values $\textbf{k}(w)\le \hat{\textbf{k}}(w)\text{ for any } w\in V^{\partial}$}\end{array}\right\}.
\end{equation}

\medskip
\noindent\textbf{Step one:} We show that $\mathcal{K}_1$ is not empty.

Let $\textbf{k}: V\rightarrow \mathbb{R}_+$ be a geodesic curvature vector such that on the boundary vertices $\textbf{k}$ takes the same values as $\hat{\textbf{k}}$. By Lemma \ref{lem-4.1}, we may choose the values of $\textbf{k}$ at the interior vertices small enough so that all total geodesic curvatures of the corresponding hyperbolic arcs are less than or equal to $\frac{m}{d}$, where $m=\min_{v\in V^{\circ}}{\hat{\textbf{T}}}(v)$ and $d$ is the maximum number of edges with an end point at a vertex $v\in V^{\circ}$. Therefore, $\textbf{k}\in\mathcal{K}_1$; that is, the set of $\mathcal{K}_1$ is not empty. 

\medskip
\noindent\textbf{Step two:} $\mathcal{K}_1$ has the following so-called net property; that is, for $\textbf{k}_1^{\prime},\textbf{k}_1^{\prime\prime}\in\mathcal{K}_1$, the new geodesic curvature vector 
$\textbf{k}_1: V\rightarrow \mathbb{R}_+$ defined by 
\begin{equation}\label{equa-38}
\textbf{k}_1(v)=\max\{{\textbf{k}_1^{\prime}}(v),{\textbf{k}_1^{\prime\prime}}(v)\}, v\in V, 
\end{equation}
belongs to $\mathcal{K}_1$ as well. 

Let $v_0\in V^{\circ}$. Suppose that $\mathbf{k}_1^{\prime}(v_0)\geq \mathbf{k}_1^{\prime\prime}(v_0)$. Then $\textbf{k}(v_0)=\textbf{k}_1^{\prime}(v_0)$. Denote by $V(v_0)$ the collection of the neighboring vertices of $v_0$. Then $\textbf{k}(w)\geq \textbf{k}_1^{\prime}(w)$, $\forall \; w\in V(v_0)$. By the monotonicity property (Lemma \ref{lem-3.3} (b)), the total geodesic curvature decreases as the geodesic curvatures of the neighboring circles increase. Therefore, $\textbf{T}_{\textbf{k}}(v_0)\leq \textbf{T}_{\textbf{k}_1^{\prime}}(v_0)\leq \hat{\textbf{T}}(v_0),\forall \; v_0\in V^{\circ}$. Thus, $\mathbf{k}_1\in \mathcal{K}_1$.

\medskip
\noindent\textbf{Step three:} There is an uniform upper bound for all $\textbf{k}\in \mathcal{K}_1$. 

Define $\tau_1:V\rightarrow \mathbb{R}_+\cup \{+\infty\}$ by letting 
\begin{equation}\label{equa-36}
\tau_1(v)=\sup_{\textbf{k}\in \mathcal{K}_1}\textbf{k}(v) \text{ for any }v\in V.\end{equation}
To prove the non-emptiness of $\mathcal{K}_1$, we have chosen $\textbf{k}$ to have the same values as $\hat{\textbf{k}}$ on the boundary vertices. Thus, $\tau_1$ takes the same values of $\hat{\textbf{k}}$ at the boundary vertices.
It suffices to show that the value of $\tau_1$ at each interior vertex is finite. Suppose not, assume that there is a nonempty subset $\tilde{V}\subset V^{\circ}$ such that the value of $\tau_1$ at each vertex in $\tilde{V}$ is $+\infty $. Using the net property, one can obtain a sequence $\{\textbf{k}_m\}_{m=1}^{\infty}\in \mathcal{K}_1$ such that $\lim_{m\rightarrow \infty}\textbf{k}_m=\tau_1$. Since $\textbf{k}_m\in \mathcal{K}_1$, $\textbf{T}_{\textbf{k}_m}(v)\leq \hat{\textbf{T}}(v)$ for all $v\in V^{\circ}$.
On the other hand, applying Lemma \ref{lem-4.2} we obtain
$$\lim_{m\rightarrow \infty}\sum_{v\in \tilde{V}} \textbf{T}_{\textbf{k}_m}(v)=\sum_{f\in F_{\tilde{V}}}\pi\min\{N(f,\tilde{V}),N(f)-2\}.$$
Thus, if $m$ is large enough, then
$$\sum_{v\in \tilde{V}} \textbf{T}_{\textbf{k}_m}(v)>\sum_{v\in \tilde{V}} \hat{\textbf{T}}(v),$$
which is a contradiction to $\textbf{k}_m\in \mathcal{K}_1$. Therefore, the value of $\tau_1$ at each interior vertex is finite. 
In summary, $\tau_1: V \rightarrow \mathbb{R}_+$ is a well-defined geodesic vector which takes the same values as $\hat{\textbf{k}}$ on the boundary vertices. 

\medskip
\noindent\textbf{Step four:} We prove that $\tau_1$ is the target generalized circle packing; that is, it is a solution of the discrete boundary value problem given in Theorem \ref{theorem-1.1}.

By the continuity of the total geodesic curvature depending on $\textbf{k}$, we know $\tau_1\in \mathcal{K}_1$. It remains to show that $T_{\tau_1}(v)=\hat{\textbf{T}}(v)$ at each $v\in V^{\circ}$. Suppose not, there exists $v\in V^{\circ}$ such that $T_{\tau_1}(v)<\hat{\textbf{T}}(v)$. By Lemma \ref{lem-3.3-1} (a) and (b), we increase $\tau_1(v)$ to a value such that $T_{\tau_1}(v)=\hat{\textbf{T}}(v)$ and keep the same value of $\tau_1$ at other vertices $v\in V^{\circ}$ to obtain a new $\tau_1\in \mathcal{K}_1$. This produces a contradiction to the definition of $\tau_1(v)$. Therefore, $T_{\tau_1}(v)=\hat{\textbf{T}}(v)$ for any $v\in V^{\circ}$.

\medskip
\noindent\textbf{Step five:} We show that $\tau_1$ is the unique solution of the discrete boundary value problem. 

Assume that $\tau_1'$ is another solution. Let $\Omega_{\tau_1}$ and $\Omega_{\tau_1'}$ be the union of the interstices of the circle packings determined by $\tau$ and $\tau_1'$ respectively. We show $\tau_1'=\tau_1$ by comparing the areas of $\Omega_{\tau_1}$ and $\Omega_{\tau_1'}$. 

\begin{figure}[ht]
{
\centering
\includegraphics[height=5cm]{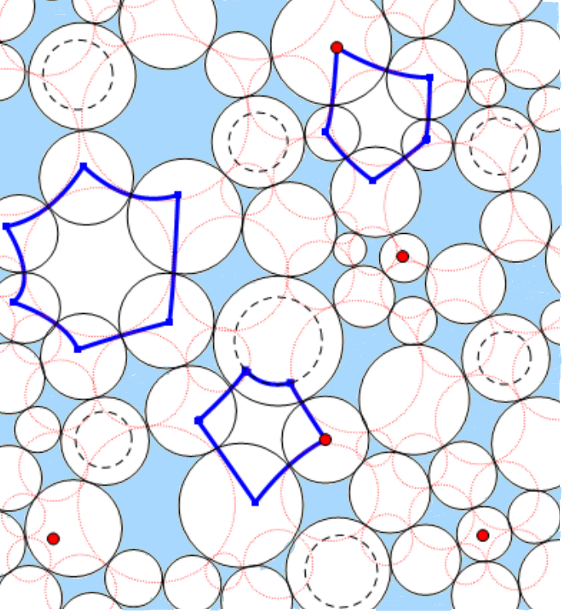} 
\caption{The interstice of a generalized circle packing.}
\label{Fig18}

}
\end{figure}

By the Gauss-Bonnet theorem, 
$$
Area_{\tau_1}(\Omega_{\tau_1})=\pi\sum_{f\in F} (N(f)-2)-\sum_{v_i\in V^{\circ}}\hat{\textbf{T}}(v_i)-\sum_{w_j\in V^{\partial}}\textbf{T}_{\tau_1}(w_j),
$$
where $\textbf{T}_{\tau_1}(w_j)$ stands for the total geodesic curvature of the part of the circle centered at the boundary vertex $w_j$ as an edge of an interstice. 

similarly, 
$$
Area_{\tau_1^{\prime}}(\Omega_{\tau_1'})=\pi\sum_{f\in F} (N(f)-2)-\sum_{v_i\in V^{\circ}}\hat{\textbf{T}}(v_i)-\sum_{w_j\in V^{\partial}}\textbf{T}_{\tau_1^{\prime}}(w_j).
$$

Clearly, $\tau_1'\in \mathcal{K}_1$. Thus,  $\tau_1(v)\geq \tau_1^{\prime}(v)$ for any $v\in V^{\circ}$. By Lemma \ref{lem-3.3-1} (b), $\textbf{T}_{\tau_1}(w_j)\leq \textbf{T}_{\tau_1^{\prime}}(w_j)$ for any $w_j\in V^{\partial}$. Therefore, the previous two equations imply that $Area_{\tau_1}(\Omega_{\tau_1})\geq Area_{\tau_1^{\prime}}(\Omega_{\tau_1'})$. On the other hand, by the monotonicity of the area (Lemma \ref{lem-3.3-1} (d)), $Area_{\tau_1}(\Omega)\leq Area_{\tau_1^{\prime}}(\Omega)$. Thus, $Area_{\tau_1}(\Omega)= Area_{\tau_1^{\prime}}(\Omega)$. Furthermore, it follows from $\tau_1(v)\geq \tau_1^{\prime}(v)$ for any $v\in V$ and the strict monotonicity of the area (Lemma \ref{lem-3.3-1} (d)) that $\tau_1=\tau_1'$.
\end{proof}

\begin{remark}\label{mathcalK-1'} To prove Theorem \ref{theorem-5.2}, one may replace $\mathcal{K}_1$ by 
\begin{equation}\label{equa-34'}
\mathcal{K}_1'=\left\{\textbf{k}:V\rightarrow\mathbb{R}_{+}\begin{array}{|l}\textbf{k}\text{ is a generalized circle packing for $G$ with}\\
\text{$\textbf{T}_{\textbf{k}}(v)\leq\hat{\textbf{T}}(v) \text{ for any } v\in V^{\circ}$ and prescribed}\\
\text{boundary values $\textbf{k}(w)=\hat{\textbf{k}}(w)\text{ for any } w\in V^{\partial}$}\end{array}\right\}.
\end{equation}
The reason we use $\mathcal{K}_1$ instead of $\mathcal{K}_1'$ in the proof is that we will apply $\mathcal{K}_1$ to show the Schwarz-Pick property I (Theorem \ref{theorem-6.1}).
\end{remark}

\begin{remark}\label{mathcalK-2'} Let
\begin{equation}\label{equa-35}
\mathcal{K}_2=\left\{\textbf{k}:V\rightarrow\mathbb{R}_{+}\begin{array}{|l}\textbf{k}\text{ is a generalized circle packing for $G$ with}\\
\text{$\textbf{T}(v)\geq\hat{\textbf{T}}(v),\;\forall \;v\in V^{\circ}$ and prescribed}\\
\text{boundary values $\textbf{k}(w)\ge \hat{\textbf{k}}(w),w\in V^{\partial}$}\end{array}\right\}
\end{equation}
and 
\begin{equation}\label{equa-35'}
\mathcal{K}_2'=\left\{\textbf{k}:V\rightarrow\mathbb{R}_{+}\begin{array}{|l}\textbf{k}\text{ is a generalized circle packing for $G$ with}\\
\text{$\textbf{T}(v)\geq\hat{\textbf{T}}(v),\;\forall \;v\in V^{\circ}$ and prescribed}\\
\text{boundary values $\textbf{k}(w)= \hat{\textbf{k}}(w),w\in V^{\partial}$}\end{array}\right\}.
\end{equation}
Given $\textbf{k}_2^{\prime},\textbf{k}_2^{\prime\prime}\in\mathcal{K}_2$ (resp. $\mathcal{K}_2'$),  define  
$\textbf{k}_2: V\rightarrow \mathbb{R}_+$ as 
\begin{equation}\label{equa-38'}
\textbf{k}_2(v)=\min\{{\textbf{k}_2^{\prime}}(v),{\textbf{k}_2^{\prime\prime}}(v)\} \;\text{ for any } v\in V,
\end{equation}
and $\tau_2:V\rightarrow \mathbb{R}_+\cup \{+\infty\}$ as
\begin{equation}\label{equa-36'}
\tau_2(v)=\inf_{\textbf{k}\in \mathcal{K}_2 \text{ (resp. $\mathcal{K}_2'$) }} \textbf{k}(v) \;\text{ for any }v\in V.\end{equation}
Then we can utilize $\mathcal{K}_2$ (resp. $\mathcal{K}_2'$)  and the steps of the previous proof to give an alternative proof of Theorem \ref{theorem-5.2}.
\end{remark}

\section{Convergence of Thurston's algorithm}\label{sec-5.2}
Let $S_{g,n}$ be a compact oriented topological surface with finitely many genus $g\geq 0$ and finitely many holes $n\geq 1$ and let $\mathcal{D}$ be a polygonal cellular decomposition of $S_{g,n}$. Our Theorem \ref{theorem-1.1} develop a necessary and sufficient condition for the discrete boundary value problem to have a unique solution. 
In this subsection, we show the convergence of Thurston's algorithm to this unique solution; that is, we prove Theorem \ref{theorem-1.3}. 

\begin{proof}[Proof of Theorem \ref{theorem-1.3}] Let $V^{\partial}$ and $V^{\circ}$ denote the sets of the boundary vertices and interior vertices of $\mathcal{D}$, respectively. For any initial vector $(k_v^0)_{v\in V^{\circ}}$, there is a generalized circle packing $\mathcal{P}^0$ such that the geodesic curvature of the generalized circle centered at the boundary vertex $w$ is $\hat{k}(w)$ for every $w\in V^{\partial}$ and the geodesic curvature of the generalized circle centered at the interior vertex $v$ is $k^0_v$ for every $v\in V^{\circ}$. Denote by $\textbf{k}^0$ the geodesic curvature vector of $\mathcal{P}^0$.

Let $\{\textbf{k}^m\}_{m=0}^{\infty}$ be the sequence of geodesic curvature vectors constructed through the Thurston algorithm in Section \ref{Thurston algorithm} and let $\mathcal{P}^m$ be the generalized circle packing determined by $\textbf{k}^m$. Since the values of $\textbf{k}^m$ maintain the same values as ${\textbf{k}}^*$ on the boundary vertices, we only need to consider the geodesic curvatures $({k_1}^m,k^m_2,\cdots,k^m_{|V^{\circ}|})$ of the generalized circles centered at the interior vertices.

Let 
$$
\textbf{s}^m=\langle s_1^m,s_2^m,\cdots,s_{|V^{\circ}|}^m\rangle=\langle \ln k_1^m,\ln k^m_2, \cdots, \ln k^m_{|V^{\circ}|}\rangle.
$$
The process of the Thurston algorithm from an input to an output can be expressed by the following total geodesic curvature adjusting map $\Psi$:
\begin{equation}\label{the adjusting map for the algorithm}
  \Psi:\mathbb{R}^{|V^{\circ}|}\longrightarrow \mathbb{R}^{|V^{\circ}|}:
\textbf{s}^0 \longmapsto\textbf{s}^1  
\end{equation}
Then $\Psi^m(\textbf{s}^0)=\textbf{s}^m$. 
Let ${\textbf{k}}^*$ be a solution of the discrete boundary value problem (Theorem \ref{theorem-5.2}), and let
$${\textbf{s}}^{*}=\langle {s}_1^*,{s}_2^*,\cdots,{s}_{|V^{\circ}|}^*\rangle=\langle \ln {k}_1^*,\ln {k}_2^*, \cdots, \ln {k}_{|V^{\circ}|}^*\rangle.$$
Clearly, ${\textbf{s}}^{*}$ is a fixed point of the adjusting map. We show that $\Psi$ is a contraction map on any compact box $X$ centered at ${\textbf{s}}^{*}$ in $\mathbb{R}^{|V^{\circ}|}$, which implies that the fixed point of $\Psi$ is unique.

Let $\textbf{T}^m=\langle T_1^m,T_2^m,\cdots,T^m_{|V^{\circ}|}\rangle$ be the vector of total geodesic curvatures of the generalized circles of $\mathcal{P}^m$ centered at the interior vertices. Now we view $\textbf{T}^m$ as a vector function $\mathscr T$ of $\textbf{s}$; that is, 
\begin{equation}
  \mathscr T:\mathbb{R}^{|V^{\circ}|}\longrightarrow \left(\mathbb{R}_{+}\right)^{|V^{\circ}|}:
\langle s_1,s_2,\cdots,s_{|V^{\circ}|} \rangle\longmapsto \langle \mathscr T_1,\mathscr T_2,\cdots,\mathscr T_{|V^{\circ}|}\rangle.  
\end{equation}

Given $v_i\in V^{\circ}$, denote by $j\sim i$ if there is an edge between $v_j$ and $v_i$. Applying Lemma \ref{lem-3.3-1}, we obtain the following monotone results. 
\begin{proposition}\label{prop5.2}
\begin{itemize}
\item [(1)] $\sum_{j=1}^{|V^{\circ}|}\frac{\partial \mathscr T_i}{\partial s_j}=\frac{\partial \mathscr T_i}{\partial s_i}+\sum_{j\sim i}\frac{\partial \mathscr T_i}{\partial s_j}>0$,
    \item [(2)] $\frac{\partial \mathscr T_i}{\partial s_j}=\frac{\partial \mathscr T_j}{\partial s_i}<0, j\sim i$,
    \item [(3)] $\frac{\partial \mathscr T_i}{\partial s_i}>0$,
    \item [(4)] $\sum_{v_j\in V(f)}\frac{\partial Area (\Omega_{f})}{\partial s_j}<0, \forall \; f\in F$.
\end{itemize}
\end{proposition}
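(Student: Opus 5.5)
We derive all four items from the face-wise decomposition $\mathscr T_i=\sum_{f\ni v_i} T_{v_i,f}$, which splits the total geodesic curvature at $v_i$ into the curvatures of the interstice sides on the circle at $v_i$. We then pass from $k$ to $s=\ln k$ by the chain rule $\partial/\partial s_j = k_j\,\partial/\partial k_j$. Since $k_j>0$, every sign statement in Lemma \ref{lem-3.3-1} transfers directly to the $s$-coordinates. For each face $f$ with vertices $v_{i_1},\dots,v_{i_n}$, the quantity $T_{v_i,f}$ depends only on the curvatures of the vertices of $f$. If $v_j$ is not a vertex of any face containing $v_i$, then $\partial\mathscr T_i/\partial s_j=0$. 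We will interpret $j\sim i$ as ``$v_j$ shares a face with $v_i$''; this is the neighbor notion used in Lemma \ref{lem-3.3}. For a triangulation it coincides with adjacency along an edge.

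For (3), we apply Lemma \ref{lem-3.3-1}(c) face by face: $\partial T_{v_i,f}/\partial k_i>0$ for every $f\ni v_i$. Summing over these faces and multiplying by $k_i$ gives $\partial\mathscr T_i/\partial s_i>0$. For the inequality in (2), Lemma \ref{lem-3.3-1}(b) gives $\partial T_{v_i,f}/\partial k_j<0$ whenever $v_j\in V(f)$ and $j\ne i$. Every face containing both $v_i$ and $v_j$ then contributes negatively, and at least one such face exists. Item (4) is immediate from Lemma \ref{lem-3.3-1}(d): $\sum_j k_j\,\partial\mathrm{Area}(\Omega_f)/\partial k_j$ is a sum of negative terms.

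For (1), we fix a face $f\ni v_i$. Because we sum over interior vertices only, the boundary curvatures are held fixed, so we must show $\sum_{v_j\in V(f)\cap V^\circ} k_j\,\partial T_{v_i,f}/\partial k_j>0$. The plan is to exploit the scaling structure by computing the derivative of $T_{v_i,f}$ along the direction that scales all curvatures of $f$. Concretely, we use the explicit formula $T_{v,f}=T(k_v,k_{P_f})$ from the proof of Theorem \ref{theorem-5.1}, together with the Euler-type identity coming from (\ref{equa-10}), namely $\sum_j k_j\,\partial k_P/\partial k_j$. From these we will check directly that $k_i\,\partial_{k_i}T_{v_i,f}+k_P\,\partial_{k_P}T_{v_i,f}\cdot(\partial k_P/\partial k_i)\,k_i/k_P$ dominates the off-diagonal terms. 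An alternative route would be to mimic the Euclidean homogeneity argument, but the hyperbolic case has no exact homogeneity, so the bound has to come from the explicit derivative formulas. Omitting the boundary neighbours only drops negative terms, since those terms are $k_w\,\partial T_{v_i,f}/\partial k_w<0$. Hence it suffices to prove the inequality with the sum running over all $v_j\in V(f)$.

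The symmetry $\partial\mathscr T_i/\partial s_j=\partial\mathscr T_j/\partial s_i$ in (2) is the main obstacle. Face by face it amounts to $k_j\,\partial T_{v_i,f}/\partial k_j=k_i\,\partial T_{v_j,f}/\partial k_i$. Since $T_{v,f}$ depends on the other curvatures only through $k_P$, this reduces to the identity
\[
k_j\,\partial_{k_P}T(k_i,k_P)\,\partial_{k_j}k_P=k_i\,\partial_{k_P}T(k_j,k_P)\,\partial_{k_i}k_P .
\]
We will verify this identity using (\ref{equa-10}) and the derivative of $T=k\,\ell$ in $k_P$, which Lemma \ref{lem-3.1}(c),(d) supply. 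We expect $\partial_{k_P}T(k,k_P)$ to be proportional to $k/(k_P^2+k^2-1)$, which makes both sides equal to a common symmetric expression in $k_i,k_j$ divided by the sum in (\ref{equa-10}). If the direct computation becomes messy, the fallback is to exhibit a potential: show that the $1$-form $\sum_i T_{v_i,f}\,ds_i$ is closed, as in Colin de Verdière's method and \cite{MR4842734}, and read off the symmetry from that.
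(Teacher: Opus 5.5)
Your proposal is correct. For (2)--(4) it spells out what the paper compresses into the single sentence ``Applying Lemma \ref{lem-3.3-1}'': the sign statements (b)--(d) transfer to $s$-coordinates via $\partial/\partial s_j=k_j\,\partial/\partial k_j$. The symmetry in (2) is used tacitly by the paper, since Lemma \ref{lem-3.3-1} as stated does not contain it, and you verify it correctly: in all three cases $\partial_{k_P}T(k,k_P)=-2k/(k_P^2+k^2-1)$. For $k=1$, differentiate $T=2/k_P$ directly, because Lemma \ref{lem-3.1}(c),(d) only cover $k\neq1$.

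The genuine difference is in (1). To read (1) off Lemma \ref{lem-3.3-1}, one combines its item (a), which is a \emph{column} sum, with the symmetry you have just proved. Face by face,
\[
\sum_{v_j\in V(f)}k_j\frac{\partial T_{v_i,f}}{\partial k_j}=k_i\frac{\partial}{\partial k_i}\sum_{v_j\in V(f)}T_{v_j,f}=-k_i\frac{\partial\operatorname{Area}(\Omega_f)}{\partial k_i}>0,
\]
so (1) costs nothing once (2) is in hand.

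Your direct scaling computation is only announced (``we will check''), but it does close. Because $\cot(\theta_i/2)=k_i/\sqrt{k_P^2-1}$ with $\sum_i\theta_i=2\pi$, the quantity $\sqrt{k_P^2-1}$ is exactly homogeneous of degree one in $(k_1,\dots,k_n)$. So there is an exact homogeneity after all, and it gives $\sum_j k_j\,\partial k_P/\partial k_j=(k_P^2-1)/k_P$. The face-wise row sum then becomes:
\begin{itemize}
\item[] $\frac{2k}{a^3}\bigl(\frac{a}{k_P}-\arctan\frac{a}{k_P}\bigr)$ for $k=k_i>1$, with $a=\sqrt{k^2-1}$;
\item[] $\frac{2k}{b^3}\bigl(\operatorname{arctanh}\frac{b}{k_P}-\frac{b}{k_P}\bigr)$ for $k<1$, with $b=\sqrt{1-k^2}$;
\item[] $\frac{2}{3k_P^3}$ for $k=1$.
\end{itemize}
All three are positive. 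The symmetry route is shorter; yours yields explicit closed forms, including one for $\partial\operatorname{Area}(\Omega_f)/\partial k_i$.

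Finally, reading $j\sim i$ as ``sharing a face'' is not merely convenient but necessary. With the paper's edge-adjacency definition, the middle equality in (1) fails when $N(f)\ge4$. An interior vertex of a common face that is not joined to $v_i$ by an edge still enters $\mathscr T_i$ through $k_P$.
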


By the Thurston algorithm, we know that the total geodesic curvature at an interior vertex $v_i$ is adjusted to $\hat{\textbf{T}}_i$, that is, for each $1\le i\le |V^{\circ}|$, $$\mathscr T_i(s_1,\cdots,s_{i-1},\Psi_i(\textbf{s}),s_{i+1}\cdots,s_{|V^{\circ}|})=\hat{\textbf{T}}_i.$$
By the implicit differentiation of $\mathscr T_i$, we obtain:
$$
\frac{\partial \Psi_i(\textbf{s})}{\partial s_j} =\begin{cases}
  -\frac{\frac{\partial \mathscr T_{i}}{\partial s_j}}{\frac{\partial \mathscr T_{i}}{\partial \Psi_i(\textbf{s})}}, & j\neq i  \\
  0,&j=i
\end{cases}
$$
By Proposition \ref{prop5.2} (3), we see that 
$$
\sum_{j\neq i}\left|\frac{\partial \mathscr T_{i}}{\partial s_j}\right|<\left|\frac{\partial \mathscr T_{i}}{\partial\Psi_i(\textbf{s})}\right|.
$$
Thus,
$$
\sum_{j=1}^{|V^{\circ}|}\left|\frac{\partial\Psi_i(\textbf{s})}{\partial s_j}\right|=\frac{\sum_{j\neq i}\left|\frac{\partial \mathscr T_{i}}{\partial s_j}\right|}{\left|\frac{\partial \mathscr T_{i}}{\partial\Psi_i(\textbf{s})}\right|}<1.
$$
Given $\textbf{s}, \textbf{s}^{\prime}\in \mathbb{R}^{|V^{\circ}|}$, we apply the mean value theorem for a vector function to obtain 
$$
\Psi_i(\textbf{s})-\Psi_i\left(\textbf{s}^{\prime}\right)=\int_0^1 \nabla \Psi_i\left(\textbf{s}^{\prime}+t\left(\textbf{s}-\textbf{s}^{\prime}\right)\right) \cdot\left(\textbf{s}-\textbf{s}^{\prime}\right) d t. 
$$
Thus,
$$
\begin{aligned}
 \left|\Psi_i(\textbf{s})-\Psi_i\left(\textbf{s}^{\prime}\right)\right| \leq &\int_0^1\left|\nabla \Psi_i\left(\textbf{s}^{\prime}+t\left(\textbf{s}-\textbf{s}^{\prime}\right)\right)\left(\textbf{s}-\textbf{s}^{\prime}\right)\right| d t \\
\leq & \int_{0}^1\left( \sum_{j=1}^{|V^{\circ}|}\left|\frac{\partial \Psi_i}{\partial s_j}\left( \textbf{s}^{\prime}+t\left(\textbf{s}-\textbf{s}^{\prime}\right)\right)\right| \cdot\left|s_j-s_j^{\prime}\right| \right)d t\\
= & \int_{0}^1\left( \sum_{j\neq i}\left|\frac{\partial \Psi_i}{\partial s_j}\left( \textbf{s}^{\prime}+t\left(\textbf{s}-\textbf{s}^{\prime}\right)\right)\right| \cdot\left|s_j-s_j^{\prime}\right| \right)d t.
\end{aligned}
$$
Let \( \|\textbf{s} - \textbf{s}'\|_{\infty} = \max_j |s_j - s_j'| \). Define $ \textbf{s}_t = \textbf{s}^{\prime}+t\left(\textbf{s}-\textbf{s}^{\prime} \right)$. Then
$$
\left|\Psi_i(\textbf{s})-\Psi_i\left(\textbf{s}^{\prime}\right)\right| \leq \sup _{0\leq t\leq 1} \left\{\sum_{j\neq i}\left|\frac{\partial \Psi_i}{\partial s_j}(\textbf{s}_t)\right|\right\}  \cdot\left\|\textbf{s}-\textbf{s}^{\prime}\right\|_{\infty}.
$$
Therefore,
\begin{equation}\label{equ-15}
\left\|\Psi(\textbf{s})-\Psi\left(\textbf{s}^{\prime}\right)\right\|_{\infty}\leq \lambda(\textbf{s},\textbf{s}^{\prime})\cdot\left\|\textbf{s}-\textbf{s}^{\prime}\right\|_{\infty},\quad \textbf{s}\neq \textbf{s}^{\prime}
\end{equation}
where
$$
\lambda(\textbf{s},\textbf{s}^{\prime})= \max_{i}\sup _{0\leq t\leq 1} \left\{\sum_{j\neq i}\left|\frac{\partial \Psi_i}{\partial s_j}(\textbf{s}_t)\right|\right\} <1.
$$

Now let $B({\textbf{s}}^{*},\rho)$ be the box consisting of the vectors $\textbf{s}$ of $\mathbb{R}^{|V^{\circ}|}$ such that $||\textbf{s}-{\textbf{s}}^{*}||_{\infty}\le \rho$, where $\rho>0$. Let
$$
\lambda :=\sup_{\textbf{s},\textbf{s}^{\prime}\in B({\textbf{s}}^{*},\rho),\textbf{s}\neq\textbf{s}^{\prime}}\lambda(\textbf{s},\textbf{s}^{\prime}).
$$

By expression (\ref{equ-15}), we have
$$
\left\|\Psi(\textbf{s})-{\textbf{s}}^{*}\right\|_{\infty}=\left\|\Psi(\textbf{s})-\Psi({\textbf{s}}^{*})\right\|_{\infty}\leq \lambda \left\|\textbf{s}-{\textbf{s}}^{*}\right\|_{\infty}<\rho.
$$

Thus, $\Psi$ maps $B({\textbf{s}}^{*},\rho)$ into itself. We show that $\Psi$ is a contraction on this box. 

Taking the partial derivative of the map $\mathscr T(s_1,\cdots,s_{i-1},\Psi_i(\textbf{s}),s_i,\cdots,s_{|V^{\circ}|})$ with respect to $\Psi_i(\textbf{s})$ and applying Proposition \ref{prop5.2} (2) and (3), we obtain
$$
\sum_{j=1}^{|V^{\circ}|}\frac{\mathscr T_j}{\partial \Psi_i(\textbf{s})}=\frac{\partial \mathscr T_i}{\partial\Psi_i(\textbf{s})}+\sum_{j\sim i}\frac{\partial \mathscr T_j}{\partial \Psi_i(\textbf{s})}$$ $$
=\frac{\partial \mathscr T_i}{\partial\Psi_i(\textbf{s})}+\sum_{j\sim i}\frac{\partial \mathscr T_{i}}{\partial \Psi_i(\textbf{s})}
=\frac{\partial \mathscr T_i}{\partial\Psi_i(\textbf{s})}+\sum_{j\neq i}\frac{\partial \mathscr T_{i}}{\partial s_j}>0.$$
Given any $\textbf{s}\in B({\textbf{s}}^{*},\rho)$, there is a positive real number $\epsilon$ such that $$\frac{\partial \mathscr T_i}{\partial\Psi_i(\textbf{s})}+\sum_{j\neq i}\frac{\partial \mathscr T_i}{\partial s_j}>\epsilon,\quad \forall\;v_i\in V^{\circ}, 1\leq i\leq |V^{\circ}|.$$
Moreover, there is $M>0$, which is determined by $B({\textbf{s}}^{*},\rho)$, such that
$$
\left|\frac{\partial \mathscr T_{i}}{\partial\Psi_i(\textbf{s})}\right|<M,\quad \forall\;v_i\in V^{\circ}.
$$
Thus, given any $\textbf{s},\textbf{s}^{\prime}\in B({\textbf{s}}^{*},\rho),\textbf{s}\neq \textbf{s}^{\prime}$, 
$$
\lambda =\sup_{\textbf{s},\textbf{s}^{\prime}\in B({\textbf{s}}^{*},\rho),\textbf{s}\neq\textbf{s}^{\prime}}\lambda(\textbf{s},\textbf{s}^{\prime})=\sup _{\textbf{s}\in B({\textbf{s}}^{*},\rho)} \left\{\max_{i}\sum_{j\neq i}\left|\frac{\partial \Psi_i}{\partial s_j}(\textbf{s})\right|\right\} $$
$$=\sup_{\textbf{s}\in B({\textbf{s}}^{*},\rho)}\left\{\max_{i}\frac{\sum_{j\neq i}\left|\frac{\partial \mathscr T_{i}}{\partial s_j}\right|}{\left|\frac{\partial \mathscr T_{i}}{\partial\Psi_i(\textbf{s})}\right|}\right\}
\leq \sup_{\textbf{s}\in B({\textbf{s}}^{*},\rho)}\left\{\max_{i}\frac{\left|\frac{\partial \mathscr T_{i}}{\partial\Psi_i(\textbf{s})}\right|-\epsilon}{\left|\frac{\partial \mathscr T_{i}}{\partial\Psi_i(\textbf{s})}\right|}\right\}
\leq 1-\frac{\epsilon}{M}
<1.$$
Thus, \(\Psi\) is a contraction mapping. It follows that the fixed point ${\textbf{s}}^{*}$ is unique and any $\textbf{s}^0\in (-\infty,+\infty)^{|V^{\circ}|}$,  $$\lim_{m\rightarrow+\infty}\Psi^m(\textbf{s}^0)={\textbf{s}}^{*}.$$
\end{proof}

\begin{remark} If the initial vector $\textbf{k}^0\in \mathcal{K}_1'$ (see Remark \ref{mathcalK-1'}), then the sequence $\{\textbf{k}^m\}_{m=0}^{\infty}$ of the Thurston algorithm stays in $\mathcal{K}_1'$ and increases monotonically to ${\textbf{k}}^*$; if $\textbf{k}^0\in \mathcal{K}_2'$ (see Remark \ref{mathcalK-2'}), then $\{\textbf{k}^m\}_{m=0}^{\infty}$ stays in $\mathcal{K}_2'$ and decreases monotonically to ${\textbf{k}}^*$. 
\end{remark}

\section{Discrete Schwarz-Pick Lemma and Maximum Modulus Principle}
In this section, we develop the so-called discrete Schwarz-Pick lemma and maximum modulus principle for generalized circle packings with the contact graph $G$ as the $1$-skeleton of a polygonal cellular decomposition $\mathcal{D}$ of a compact oriented surface $\mathcal{S}_{g, n}$ of genus $g\ge 0$ and with $n$ holes. Let $\textbf{k, k}^{\prime} : V \rightarrow(0, \infty)^{|V|}$ be two such generalized circle packings. Denote by $V^{\circ}$ and $V^{\partial}$ the sets of interior and boundary vertices, respectively, where $V=V^{\partial}\cup V^{\circ}$. 

\begin{theorem}[Schwarz-Pick I]\label{theorem-6.1} 
Assume that $\hat{\textbf{T}}\in \mathcal{T}$ and two generalized circle packings $\textbf{k}$ and $\textbf{k}^{\prime}$ associated with a graph $G$ satisfy

(1) $\textbf{k}(w) \leq\textbf{k}^{\prime}(w)$ for any $w \in \mathrm{V}^{\partial}$ and 

(2) $\textbf{T}_{\textbf{k}}(v)=\textbf{T}_{\textbf{k}^{\prime}}(v)=\hat{\textbf{T}}(v)$ for any $v\in V^{\circ}$. 

Then 

(a) $\textbf{k} \leq \textbf{k}^{\prime}$, which means ${\textbf{k}}(v) \leq {\textbf{k}}^{\prime}(v)$ at every vertex $v$ of $G$, and 

(b) if the equality holds at some vertex $v\in V^{\circ}$, then $\textbf{k}=\textbf{k}^{\prime}$. 
\end{theorem}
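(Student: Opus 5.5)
For part (a) I will use the Perron family from the proof of Theorem \ref{theorem-5.2}, this time with boundary data $\hat{\textbf{k}}=\textbf{k}'|_{V^\partial}$. Let $\mathcal{K}_1$ be the family (\ref{equa-34}) for the data $(\textbf{k}'|_{V^\partial},\hat{\textbf{T}})$. Hypothesis (1) gives $\textbf{k}(w)\le \textbf{k}'(w)$ on $V^\partial$, and hypothesis (2) gives $\textbf{T}_{\textbf{k}}=\hat{\textbf{T}}$ on $V^\circ$, so $\textbf{k}\in\mathcal{K}_1$. This is exactly why the paper kept the inequality on the boundary in $\mathcal{K}_1$ (Remark \ref{mathcalK-1'}).

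Steps two to five of that proof then show that $\tau_1=\sup\mathcal{K}_1$ is the unique solution with boundary values $\textbf{k}'|_{V^\partial}$ and interior total curvatures $\hat{\textbf{T}}$. One point needs care. Members of $\mathcal{K}_1$ may lie strictly below $\textbf{k}'$ on the boundary, but the net property still produces $\tau_1$ with $\tau_1|_{V^\partial}=\textbf{k}'|_{V^\partial}$, since the boundary values are capped and the maximum is attained. By uniqueness in Theorem \ref{theorem-5.2}, $\tau_1=\textbf{k}'$, and therefore $\textbf{k}\le\tau_1=\textbf{k}'$ at every vertex. This settles (a).

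For part (b) I will argue by a strong maximum principle using the strict monotonicity in Lemma \ref{lem-3.3}. Set
\[
A=\{v\in V^\circ:\textbf{k}(v)=\textbf{k}'(v)\},
\]
and assume $A\neq\emptyset$. Take $v\in A$. Then $\textbf{T}$ at $v$ coincides for both packings, the central curvatures agree, and every neighbor $u$ satisfies $\textbf{k}(u)\le\textbf{k}'(u)$ by (a). By Lemma \ref{lem-3.3}(b), $\textbf{T}$ at $v$ is strictly decreasing in each neighboring curvature. So if some neighbor had $\textbf{k}(u)<\textbf{k}'(u)$, we would get $\textbf{T}_{\textbf{k}'}(v)<\textbf{T}_{\textbf{k}}(v)$, a contradiction. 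Hence every neighbor of $v$, across all faces containing $v$ and including boundary vertices, has equal curvature in the two packings.

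Since the 1-skeleton, or more precisely the face-adjacency relation of neighboring vertices, is connected, I propagate this equality through interior vertices. The set of vertices with equality is closed under passing to neighbors of interior vertices in that set. Every vertex is reached from $v$ by a path whose intermediate vertices are interior, because boundary vertices are only endpoints; otherwise I route through faces, and each face containing an interior vertex of $A$ has all its vertices forced equal. This gives $\textbf{k}=\textbf{k}'$.

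The main obstacle is this connectivity step. A path between interior vertices might seem to have to pass through a boundary vertex, which carries no total curvature equation. I expect to resolve it by noting that the interior of $S_{g,n}$ is connected. Then the union of the closed stars of interior vertices, together with the faces that have only boundary vertices, is handled as follows: any vertex lies on a face, and faces sharing edges form a connected dual graph. Following a chain of faces, each face either contains an interior vertex already in the equality set, in which case all its vertices are equal, or consists only of boundary vertices, which lie in $A$'s neighborhood once an adjacent face has been processed. Strictness in Lemma \ref{lem-3.3}(b) must be checked for neighbors that are hypercycle or horocycle centers, but the lemma is stated uniformly in all cases.
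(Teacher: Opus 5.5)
Your part (a) is the paper's argument: $\textbf{k}$ lies in the Perron family $\mathcal{K}_1$ built from the boundary data $\textbf{k}'|_{V^\partial}$, and the supremum of that family is the unique solution $\textbf{k}'$. Your local step in (b) also matches the paper: strict decrease of $\textbf{T}$ in the neighbouring curvatures (Lemma \ref{lem-3.3}(b)) forces equality at every vertex of every face through an interior vertex of equality. The gap is the global step, which you flagged but did not close. The argument can only be restarted at interior vertices, since boundary vertices carry no equation. What it actually proves is equality at all vertices of the faces containing an interior vertex in the component of $v$, in the graph on $V^\circ$ where two interior vertices are adjacent when they lie on a common face. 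Your claim that every vertex can be reached through interior intermediate vertices is false in general. Connectedness of the interior of $S_{g,n}$ does not help, because an edge of $\mathcal{D}$ joining two boundary vertices lies in the interior of the surface and can separate interior vertices. The face-chain routing also stalls at any face entered only through boundary vertices, because equality at boundary vertices yields no new equation.

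This cannot be repaired, because (b) fails without an extra combinatorial hypothesis. Triangulate a disk with boundary cycle $w_1w_2w_3w_4$, an interior edge $w_1w_3$, an interior vertex $u_1$ joined to $w_1,w_2,w_3$, and an interior vertex $u_2$ joined to $w_3,w_4,w_1$. Then $\textbf{T}(u_1)$ depends only on the curvatures at $u_1,w_1,w_2,w_3$. Take boundary data $\hat{\textbf{k}}\le\hat{\textbf{k}}'$ that agree at $w_1,w_2,w_3$ and satisfy $\hat{\textbf{k}}(w_4)<\hat{\textbf{k}}'(w_4)$, take any $\hat{\textbf{T}}\in\mathcal{T}$, and let $\textbf{k},\textbf{k}'$ be the solutions from Theorem \ref{theorem-5.2}. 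Both solve $\textbf{T}(u_1)=\hat{\textbf{T}}(u_1)$ with the same neighbouring values, so Lemma \ref{lem-3.3}(a) gives $\textbf{k}(u_1)=\textbf{k}'(u_1)$. Yet $\textbf{k}(w_4)<\textbf{k}'(w_4)$, and your own local argument even gives $\textbf{k}(u_2)<\textbf{k}'(u_2)$. The paper's proof has the same hole: its ``continuing this process'' tacitly restarts the argument at boundary neighbours. Your propagation does close under an extra assumption: the interior vertices are connected through shared faces, and every boundary vertex lies on a face containing an interior vertex. Without such an assumption, only the local conclusion above is available.
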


\begin{proof}
 Let
\begin{equation}\label{equa42}
\mathcal{K}=\{\tilde{\textbf{k}}:V\rightarrow\mathbb{R}_{+}|\begin{array}{l}\textbf{T}_{\tilde{\textbf{k}}}(v)\leq\hat{\textbf{T}}(v), \forall \;v\in V^{\circ},\text{ and}\\
\tilde{\textbf{k}}(w)\le \textbf{k}^{\prime}(w),\forall \;w\in V^{\partial}\end{array}\}.
\end{equation}
In the course of proving the existence of a solution to the discrete boundary value theorem, we have already known
\begin{equation}\label{equa43}
\textbf{k}^{\prime}=\sup\mathcal{K},
\end{equation}
which means that $\textbf{k}'(v)=\sup_{\tilde{\textbf{k}}\in \mathcal{K}} \tilde{\textbf{k}}(v)$ for any $v \in V$. 

Clearly, $\textbf{k}\in\mathcal{K}$. Thus, ${\textbf{k}}(v) \leq {\textbf{k}}^{\prime}(v)$ at every vertex $v$ of $G$.

To prove (b), we assume that $\textbf{k}(v_0)=\textbf{k}^{\prime}(v_0)$ at some $v_0\in V^{\circ}$. By Lemma \ref{lem-3.3} (b), the total geodesic curvature of the generalized circle centered at $v_0$ is strictly decreasing with the geodesic curvatures of the neighboring vertices. Since $\textbf{T}_{\textbf{k}}(v_0)=\textbf{T}_{\textbf{k}^{\prime}}(v_0)$ and $\textbf{k}(v)\leq\textbf{k}^{\prime}(v),\forall \;v\in V$, it follows that $\textbf{k}(v')=\textbf{k}^{\prime}(v')$ at any neighboring vertex $v'$ of $v$. Now we apply the same argument to conclude that $\textbf{k}(v'')=\textbf{k}^{\prime}(v'')$ at any neighboring vertex $v''$ of $v'$. Continuing this process, we know that $\textbf{k}(v)=\textbf{k}^{\prime}(v),\forall \;v\in V$; that is, $\textbf{k}=\textbf{k}^{\prime}$.

\end{proof}

\begin{theorem}[Schwarz-Pick II]\label{theorem-6.3} Let $\textbf{k}$ and $\textbf{k}^{\prime}$ be the same generalized circle packings considered in Theorem \ref{theorem-6.1}. Assume that $\text{Area}_{\textbf{k}}(\Omega_f)$ (resp. $\text{Area}_{\textbf{k}^{\prime}}(\Omega_f)$) stands for the hyperbolic area of the interstice corresponding to a face $f$ on $\tilde{\mathcal{S}}(\textbf{k})$ (resp. $\tilde{\mathcal{S}}(\textbf{k}^{\prime})$). Then 

(a) $\text{Area}_{\textbf{k}}(\Omega_f) \geq \text{Area}_{\textbf{k}^{\prime}}(\Omega_f)$ for any face $f$ of $G$;

(b) if the equality holds at some interstice $\Omega_f$, where the face $f$ has at least one interior vertex, then $\textbf{k}=\textbf{k}^{\prime}$. 
\end{theorem}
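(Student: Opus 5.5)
By Theorem \ref{theorem-6.1}(a) we already have $\textbf{k}\le\textbf{k}^{\prime}$ vertex-wise. The area of a single interstice depends only on the curvatures of the vertices of its face. So part (a) should follow from the facewise area monotonicity in Lemma \ref{lem-3.3-1}(d). The plan is to fix a face $f$ with vertices $v_1,\dots,v_{N(f)}$ and join $(\textbf{k}(v_i))_i$ to $(\textbf{k}^{\prime}(v_i))_i$ by raising one coordinate at a time, as in Step 4 of Lemma \ref{lem-3.8}. The straight segment works as well, since every intermediate vector lies in $(\mathbb{R}_+)^{N(f)}$ and, by Lemma \ref{lem2.5}, determines a generalized circle configuration. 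Along this path each partial derivative $\partial\,\mathrm{Area}(\Omega_f)/\partial k_i$ is negative and each coordinate increment is nonnegative. Integrating therefore gives $\mathrm{Area}_{\textbf{k}}(\Omega_f)\ge \mathrm{Area}_{\textbf{k}^{\prime}}(\Omega_f)$, which is (a). The argument also shows the inequality is strict as soon as $\textbf{k}(v_i)<\textbf{k}^{\prime}(v_i)$ for some vertex $v_i$ of $f$. Here I am using that the area is a $C^1$ function of the curvatures, which is implicit in Lemma \ref{lem-3.3-1}; the horocycle value $k=1$ causes no trouble, because one-sided derivatives together with continuity of the area suffice.

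For (b), suppose equality holds at a face $f$ that has an interior vertex $v_0\in V^{\circ}\cap V(f)$. By the strictness just noted, equality forces $\textbf{k}(v)=\textbf{k}^{\prime}(v)$ for every $v\in V(f)$, and in particular at $v_0$. Theorem \ref{theorem-6.1}(b) then gives $\textbf{k}=\textbf{k}^{\prime}$.

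I expect the only real difficulty to be justifying the strict monotonicity through the horocycle curvature $k=1$, where the parametrisation changes from angles $\alpha$ to lengths $\beta$. If a direct argument is needed there, I would instead use Gauss--Bonnet: $\mathrm{Area}(\Omega_f)=\pi(N(f)-2)-\sum_{v\in V(f)}T_{v,f}$. Lemma \ref{lem-3.3-1}(a) says that $\sum_j T_{j,P}$ is strictly increasing in each $k_i$, and this quantity is continuous across $k=1$ since $k_P$ is $C^1$ by Lemma \ref{lem-3.2}. That gives strict monotonicity of the area on the whole of $(0,+\infty)$ in each variable.
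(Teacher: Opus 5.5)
Your proof is correct and follows the paper's argument: you get $\textbf{k}\le\textbf{k}'$ from Theorem \ref{theorem-6.1}(a), use the strict facewise decrease of $\mathrm{Area}(\Omega_f)$ in each vertex curvature both for (a) and to force $\textbf{k}=\textbf{k}'$ on $V(f)$ when equality holds, and then apply Theorem \ref{theorem-6.1}(b) at the interior vertex of $f$. Your appeal to Lemma \ref{lem-3.3-1}(d), or equivalently to its Gauss--Bonnet form via Lemma \ref{lem-3.3-1}(a), is the precise facewise statement needed. The paper instead cites Lemma \ref{lem-3.3}(c), which literally concerns the sum of interstice areas over a vertex star.
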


\begin{proof}
According to Lemma \ref{lem-3.3}(c), $\text{Area}(\Omega_f)$ is strictly decreasing with geodesic curvatures at the vertices of the face $f$. By Theorem \ref{theorem-6.1} (a), $\textbf{k}(v)\leq\textbf{k}^{\prime}(v),v\in V$. It follows that $\text{Area}_{\textbf{k}}(\Omega_f) \geq \text{Area}_{\textbf{k}^{\prime}}(\Omega_f)$.

To prove (b), we assume that $\text{Area}_{\textbf{k}}(\Omega_{f_0}) =\text{Area}_{\textbf{k}^{\prime}}(\Omega_{f_0})$ for some $f_0\in F$. If there is a vertex $v\in V(f_0)$ such that $\textbf{k}(v)<\textbf{k}^{\prime}(v)$, then $\text{Area}_{\textbf{k}}(\Omega_{f_0})>\text{Area}_{\textbf{k}^{\prime}}(\Omega_{f_0})$ by Lemma \ref{lem-3.3}(c) again, which is a contradiction. So we obtain $\textbf{k}(v)=\textbf{k}^{\prime}(v)$ at any $v\in V(f_0)$. Based on the graph $G$ in our consideration, one of the vertex $v$ of the face $f_0$ is an interior vertex. By Theorem \ref{theorem-6.1} (b), we conclude that $\textbf{k}=\textbf{k}^{\prime}$.
\end{proof}

\begin{theorem}[Schwarz-Pick III]\label{theorem-6.4} 
 Let $\textbf{k}$ and $\textbf{k}^{\prime}$ be the same generalized circle packings considered in Theorem \ref{theorem-6.1}. Assume that \(\ell_{\textbf{k}_i}(v, f)\) (resp. \(\ell_{\textbf{k}^{\prime}_i}(v, f)\)) stands for the length of the edge (see Figure \ref{Fig19}) of the interstice $\Omega_f$ (corresponding to the face \(f\))) that lies on the generalized circle centered at $v$ on $\tilde{\mathcal{S}}(\textbf{k})$ (resp. $\tilde{\mathcal{S}}(\textbf{k}^{\prime})$).
Then

(a) $\ell_{\textbf{k}}(v,f)\geq\ell_{\textbf{k}^{\prime}}(v,f)$ for any face $f$ and any vertex $v$ of $f$;

(b) if the equality holds for a pair $(v,f)$, where $f$ is a face and $v\in V^{\circ}$ is a vertex of $f$, then $\textbf{k}=\textbf{k}^{\prime}$. 
\end{theorem}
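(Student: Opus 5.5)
The plan is to reduce the statement to a face-wise monotonicity property of the arc length $\ell(v,f)$, and then combine it with Theorem \ref{theorem-6.1}.

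\textbf{Explicit formula.} Fix a face $f$ and a vertex $v\in V(f)$. Let $k=k_v$ be the curvature at $v$ and $k_P=k_{P_f}>1$ the curvature of the dual circle of the configuration over $f$. Since $\ell(v,f)=T_{v,f}/k_v$, the explicit formulas for $T_{v,f}$ from the proof of Theorem \ref{theorem-5.1} give
\[
\ell(v,f)=\begin{cases}
\dfrac{2}{x}\arctan\dfrac{x}{k_P}, & x=\sqrt{k^2-1},\ k>1,\\[0.8em]
\dfrac{2}{k_P}, & k=1,\\[0.8em]
\dfrac{2}{y}\operatorname{arctanh}\dfrac{y}{k_P}, & y=\sqrt{1-k^2},\ 0<k<1.
\end{cases}
\]
Here I used $\text{arccot}(k_P/x)=\arctan(x/k_P)$ and $\operatorname{arccoth}(k_P/y)=\operatorname{arctanh}(y/k_P)$. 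These three expressions agree in the limit $k\to1$, so $\ell$ is a continuous function $\ell(k,k_P)$.

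\textbf{Monotonicity in $k$ and $k_P$.}
\begin{itemize}
\item With $k$ fixed, $\ell$ is strictly decreasing in $k_P$ in every case. This agrees with Lemma \ref{lem-3.1}(c),(d), via $\ell=\alpha\sinh r$ and $\ell=\beta\cosh r$.
\item With $k_P$ fixed, the map $x\mapsto \arctan(x/k_P)/x$ is strictly decreasing in $x>0$, because $\arctan$ is concave on $(0,\infty)$. Hence $\ell$ decreases as $k$ increases on $(1,\infty)$.
\item The map $y\mapsto \operatorname{arctanh}(y/k_P)/y$ is strictly increasing in $y$, because $\operatorname{arctanh}$ is convex on $(0,1)$. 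Since $y$ decreases as $k$ increases, $\ell$ again decreases in $k$ on $(0,1)$.
\item By continuity at $k=1$, $\ell$ is strictly decreasing in $k$ on all of $(0,\infty)$.
\end{itemize}

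\textbf{Monotonicity in all curvatures of the face.} By Lemma \ref{lem-3.2}, $k_P$ is strictly increasing in every curvature $k_j$, $v_j\in V(f)$, including $k_v$ itself. Combining this with the previous step, $\ell(v,f)$ is strictly decreasing in each of the curvatures at the vertices of $f$. For the variable $k_v$ both effects point the same way.

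\textbf{Conclusion.} By Theorem \ref{theorem-6.1}(a), $\textbf{k}\le\textbf{k}'$ vertex-wise. Changing the curvatures at the vertices of $f$ one at a time from $\textbf{k}$ to $\textbf{k}'$ then gives $\ell_{\textbf{k}}(v,f)\ge\ell_{\textbf{k}'}(v,f)$, which is (a).

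For (b), suppose equality holds for a pair $(v,f)$ with $v\in V^{\circ}$. Strict monotonicity in each variable, together with $\textbf{k}\le\textbf{k}'$, forces $\textbf{k}(u)=\textbf{k}'(u)$ for every $u\in V(f)$, and in particular at the interior vertex $v$. Theorem \ref{theorem-6.1}(b) then yields $\textbf{k}=\textbf{k}'$.

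The main obstacle I expect is the single-variable monotonicity of $\ell$ in its own curvature $k$. In particular, one must handle the transition through the horocycle value $k=1$, where the formula changes type; continuity of $\ell(k,k_P)$ there is what makes the argument work.
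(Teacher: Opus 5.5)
Your proposal is correct and follows essentially the same route as the paper: strict monotonicity of $\ell(v,f)$ in $k_v$ and in the dual curvature $k_P$, then Lemma \ref{lem-3.2} to extend this to all curvatures of the face, then Theorem \ref{theorem-6.1}(a),(b). The only difference is that the paper quotes explicit formulas for $\partial\ell/\partial k$ and $\partial\ell/\partial k_P$ from the triangulated case, whereas you check the monotonicity directly from the closed form $\ell=T_{v,f}/k_v$, using concavity of $\arctan$, convexity of $\operatorname{arctanh}$ and continuity at $k=1$; this is a valid self-contained substitute.
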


\begin{proof}

\begin{figure}[ht]
{
\centering
\includegraphics[height=4cm]{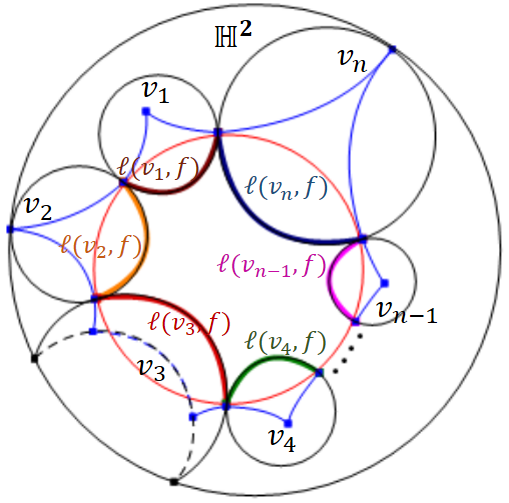} 
\caption{The demonstration of $\ell(v,f)$.}
\label{Fig19}

}
\end{figure}

By the proof of Theorem 1.6 in \cite{2024arXiv241106274H}, we can see that $$\frac{\partial\ell(v,f)}{\partial k_P}=-\frac{2}{k^2+k_P^2-1}<0$$  and 
\begin{equation}
\frac{\partial \ell(v,f)}{\partial k}=\begin{cases}
 -\frac{2k}{(1-k^2)^{\frac{3}{2}}}(\frac{k_P\sqrt{1-k^2}}{k_P^2+k^2-1}-arctanh \frac{\sqrt{1-k^2}}{k_P})<0,\quad 0<k<1\\
 -\frac{4}{3}\frac{1}{k_P3}<0,\quad k=1\\
 -\frac{2k}{(k^2-1)^{\frac{3}{2}}}(\frac{k_P\sqrt{k^2-1}}{k_P^2+k^2-1}-arctan \frac{\sqrt{k^2-1}}{k_P})<0,\quad k>1.
\end{cases}
\end{equation}
Furthermore, $\frac{\partial \ell(v,f)}{\partial k}$ is a continuous function. Therefore, $\ell(v,f)$ decreases as $k$ or $k_P$ increases. By Lemma \ref{lem-3.2}, we know that the geodesic curvature of the dual circle $k_P$ increases as $k_j$ increases, where $k_j\in V(P)$. Since $\textbf{k}\leq\textbf{k}^{\prime}$, it follows that $\ell_{\textbf{k}}(v,f)\geq\ell_{\textbf{k}^{\prime}}(v,f)$ for any $f\in F$ and any $v\in V(f)$.

Suppose that there exist $f_0\in F$ and $v_0\in V(f_0)$ such that $\ell_{\textbf{k}}(v_0,f_0)=\ell_{\textbf{k}^{\prime}}(v_0,f_0)$. We may assume that $v_0$ is an interior vertex. Because $\textbf{k}\leq\textbf{k}^{\prime}$, $k_{P_0}\leq k^{\prime}_{P_0}$. Using the strict decreasing monotonicity of the function $\ell_{\textbf{k}}(v_0,f_0)$ on $k_{v_0}$ and $k_{f_0}$,  we know $k_{v_0}< k^{\prime}_{v_0}$ and $k_{P_0}\leq k^{\prime}_{P_0}$ imply $\ell_{\textbf{k}}(v_0,f_0)>\ell_{\textbf{k}^{\prime}}(v_0,f_0)$. This is a contradiction. Thus, $k_{v_0}=k^{\prime}_{v_0}$. By Theorem \ref{theorem-6.1}, we conclude that $\textbf{k}=\textbf{k}^{\prime}$.
\end{proof}

\begin{theorem}[Schwarz-Pick IV]\label{theorem-6.2}
Let $\textbf{k}$ and $\textbf{k}^{\prime}$ be the same generalized circle packings considered in Theorem \ref{theorem-6.1}. Assume that $\rho_k$ (resp. $\rho_{\textbf{k}^{\prime}}$) is the distance function on the metric surface $\tilde{\mathcal{S}}(\textbf{k})$ (resp. $\tilde{\mathcal{S}}(\textbf{k}^{\prime})$). Then 

(a) $\rho_\textbf{k}(v_1, v_2) \geq \rho_{\textbf{k}^{\prime}}(v_1, v_2)$, $\forall \;v_1,v_2\in \tilde{V}$, where $\tilde{V}=\{v\in V:\;\textbf{k}(v)>1\}$;

(b) if the equality holds at two vertices $u$ and $v$ with at least one of them in $V^{o}$, then $\textbf{k}=\textbf{k}^{\prime}$.
\end{theorem}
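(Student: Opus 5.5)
Part (a) will follow the Beardon--Stephenson strategy: take a shortest path in $\tilde{\mathcal S}(\textbf{k}')$ and compare its length with the length of a corresponding path in $\tilde{\mathcal S}(\textbf{k})$. Fix $v_1,v_2\in\tilde V$. By Theorem \ref{theorem-6.1}(a) we have $\textbf{k}\le\textbf{k}'$, so $\textbf{k}'(v_i)\ge\textbf{k}(v_i)>1$. Hence $v_1,v_2$ are honest points (centers of hyperbolic circles) on both surfaces, and $\rho$ is defined at them.

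Let $\gamma$ be a length-minimizing path from $v_1$ to $v_2$ in $\tilde{\mathcal S}(\textbf{k})$. Such a path exists because the surface is compact away from ideal and hyper-edge vertices, and a minimizer avoids cusps. Being a geodesic, $\gamma$ is cut by the edges of $\mathcal D$ into finitely many segments $\gamma_1,\dots,\gamma_N$. Each $\gamma_s$ lies in one polygon $P_{f_s}$ and meets $\partial P_{f_s}$ transversally at two points, except at its endpoints. A segment may also run along an edge, or pass through a vertex with $\textbf{k}>1$; these cases are handled by perturbing, or by treating the vertex as a degenerate boundary point of the segment.

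Record each crossing point $y$ of $\gamma$ with an edge by its signed distance to the tangent point on that edge. Transplant these points to $\tilde{\mathcal S}(\textbf{k}')$ using the same signed distances, as in Lemma \ref{lem-3.7}. The endpoints $v_1,v_2$ are sent to themselves; the centers are fixed points of the construction, and the endpoint-at-center case is a limiting case of Lemma \ref{lem-3.6}, since it only changes a distance to a vertex. Joining consecutive transplanted points by geodesics in the polygons $P'_{f_s}$ gives a path $\gamma'$ from $v_1$ to $v_2$.

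Apply Lemma \ref{lem-3.8} face by face; when a transplanted point falls outside the polygon because a radius shrank, use Remark \ref{outside the induced hyperbolic polygon}. This gives $\ell(\gamma'_s)\le\ell(\gamma_s)$, with strict inequality whenever some vertex of $f_s$ has $\textbf{k}<\textbf{k}'$. Therefore
\[
\rho_{\textbf{k}'}(v_1,v_2)\le\ell(\gamma')\le\ell(\gamma)=\rho_{\textbf{k}}(v_1,v_2).
\]
The transplanted path $\gamma'$ is only piecewise geodesic and may leave the faces at transplanted points, but it is still a path in the glued surface. Consecutive pieces match because the transplant rule uses the same distance to the tangent point, and that tangent point is shared by the two faces adjacent to the edge.

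For (b), suppose equality holds and at least one of $u,v$ lies in $V^{\circ}$. Then every inequality in the chain is an equality, so by the strict form of Lemma \ref{lem-3.8}, $\textbf{k}=\textbf{k}'$ on all vertices of every face $f_s$ crossed by $\gamma$. The minimizer issues from $u$ (say $u\in V^\circ$), so the first face $f_1$ has $u$ as a vertex. Thus $\textbf{k}(u)=\textbf{k}'(u)$ at the interior vertex $u$, and Theorem \ref{theorem-6.1}(b) gives $\textbf{k}=\textbf{k}'$. If $\gamma$ initially runs along an edge from $u$, then both faces adjacent to that edge contain $u$, and the same argument applies.

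The main obstacle is the rigorous bookkeeping in the transplanting step. One must handle segments passing through vertices or along edges, endpoints located at vertices rather than on edges, and crossing points on the far side of the tangent point. One must also show that Lemma \ref{lem-3.8} still applies when several curvatures change at once and horocycles are present. I would first reduce to generic minimizers by a small perturbation of $\gamma$ (its length varies continuously), then treat the vertex endpoints as a limit of edge points approaching $v_i$, using Lemma \ref{lem-3.7}-type continuity.
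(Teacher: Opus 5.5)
Your architecture matches the paper's. You transplant a minimizer $\gamma$ of $\tilde{\mathcal S}(\textbf{k})$ piece by piece, locating crossing points by their distance to tangent points. You compare the pieces with Lemma~\ref{lem-3.8} and Remark~\ref{outside the induced hyperbolic polygon}, and you get rigidity from Theorem~\ref{theorem-6.1}(b). The paper first reduces to $\textbf{k},\textbf{k}'$ differing at a single vertex, and for (b) it argues by contraposition through uniqueness. Your direct multi-vertex use of Lemma~\ref{lem-3.8} and your direct equality argument would be acceptable alternatives.

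\textbf{The gap: the end pieces.} The two end pieces $\gamma_1,\gamma_N$ start at a center. Lemma~\ref{lem-3.8} only covers segments whose two endpoints lie on edges and are positioned by distance to tangent points, so it does not apply to them. Neither of your fixes works.
\begin{itemize}
\item Declaring centers to be fixed points is incompatible with your transplant rule. If $y\to u$ along the edge $uu_1$ with tangent point $x_1$, then $d(y,x_1)\to r_u$. So the transplanted points converge to the point at distance $r_u$ from $x_1'$, which lies beyond $u'$ by $r_u-r_u'>0$ whenever $\textbf{k}(u)<\textbf{k}'(u)$. The limiting path therefore does not end at $u'$. In any case, a limit of strict inequalities only gives $\le$.
\item The end piece is not a limiting case of Lemma~\ref{lem-3.6}. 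Let $D$ be a piece of $P_{f_1}$ cut off by $\gamma_1$. Changing $k_u$ changes the length $r_u+r_{u_1}$ of the side of $D$ adjacent to $\gamma_1$ at $u$. Moreover $\partial\ell(\gamma_1)/\partial r_u=\cos\phi$, where $\phi$ is the angle of $D$ at $u$. This has the wrong sign when $\phi>\pi/2$.
\end{itemize}

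\textbf{Why it is decisive for (b), and the missing idea.} Your equality argument needs a strict decrease on $f_1$ forced by $\textbf{k}(u)<\textbf{k}'(u)$. If $\gamma$ crosses only one or two faces, or its middle pieces lie in faces with only boundary vertices, Lemma~\ref{lem-3.8} gives no strictness at all. The missing ingredient is the paper's Case one, applied while changing one vertex at a time.
\begin{itemize}
\item If the changing vertex is not $u$, apply Lemma~\ref{lem-3.6} to the piece of $P_{f_1}$ that avoids it. That piece has fixed sides, and its angles decrease by Lemmas~\ref{lem-3.2} and~\ref{lem-3.1}.
\item If the changing vertex is $u$ itself, use that the angle of $P_{f_1}$ at $u$ is $<\pi$. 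So one of the two pieces has an acute angle $\phi$ at $u$. The paper selects this piece via the perpendiculars at the tangent points $x_1,x_2$, making the triangle $u x_i p$ obtuse.
\item In that piece, shrinking $r_u$ shortens $\gamma_1$ at rate $\cos\phi>0$. The simultaneous decrease of the remaining angles shortens it further. This gives $\ell(\gamma_1')<\ell(\gamma_1)$. At each parameter value you may compute the derivative in whichever piece currently has the acute angle.
\end{itemize}

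\textbf{A secondary point.} Your claim that $\gamma'$ ``is still a path in the glued surface'' needs an argument. A transplanted point lying beyond a vertex $w'$, as in Remark~\ref{outside the induced hyperbolic polygon}, is not a point common to the two faces adjacent to that edge unless the cone angle at $w'$ is $2\pi$. So you need a rerouting through $w'$ with controlled length. The paper is also brief on this point.
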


\begin{proof}
 We first prove (a). By Theorem \ref{theorem-6.1}, $\textbf{k}(v)\le\textbf{k}'(v)$ for any $v\in V$. Using the transitivity of inequalities, it is sufficient to develop the monotonicity of distance for two generalized hyperbolic conic circle packings determined by $\textbf{k}$ and $\textbf{k}'$ for which the values of $\textbf{k}$ and $\textbf{k}'$ differ only at one vertex $v\in V$. So, we assume that $\textbf{k}(v)<\textbf{k}'(v)$ and $\textbf{k}(v')=\textbf{k}'(v')$ for any $v'\in V\setminus\{v\}$.
 
 Let $v_1,v_2\in \tilde{V}$, where $\tilde{V}=\{v\in V:\;\textbf{k}(v)>1\}$. Let $\gamma$ be a piecewise geodesic curve on the surface $\tilde{S}(\textbf{k})$ that connects $v_1$ and $v_2$ and realizes the hyperbolic distance between them. Then $\gamma$ passes through a chain of polygons $\{P_1,P_2,\cdots,P_s\}$ that correspond to a chain of faces $\{f_1,f_2,\cdots,f_s\}$ in $G$. They divide $\gamma$ into finitely many geodesic segments $\{\gamma _j\} _{j=1}^{m}$, where the end points of $\gamma _j$ lie on the boundary of $P _j$. 
 
 The curve $\gamma$ does not pass through the center of any hypercycle or the center of any horocycle on the circle packing determined by $\textbf{k}$. Suppose this is not true, We consider two cases. 
 If $\gamma$ passes through the center of some horocircle, then the length of $\ell(\gamma)$ is $\infty$ and hence the distance between $v_1$ and $v_2$ on $\tilde{\mathcal{S}}(\textbf{k})$ is equal to $\infty $. If $\gamma$ intersects with the center of some hypercycle $C$, there are two situations to consider. One situation is that $\gamma_0$ is decomposed into a union of  three geodesic segments $\gamma_{01},\gamma_{02}$ and $\gamma_{03}$ and the other is that $\gamma_0$ intersects the center of some hypercircle as a point (see Figure \ref{Fig20}). By doing a perturbation or replacing some pieces of $\gamma_0$ intersecting the center of the hypercircle by other geodesic segments to form another piecewise geodesic segment with the same end points as $\gamma_0$ and a shorter length than $\gamma_0$. Therefore, $\gamma$ does not realize the hyperbolic distance between $v_1$ and $v_2$ on the surface $\tilde{\mathcal{S}}(\textbf{k})$.

 \begin{figure}[ht]
{
\centering
\includegraphics[height=4cm]{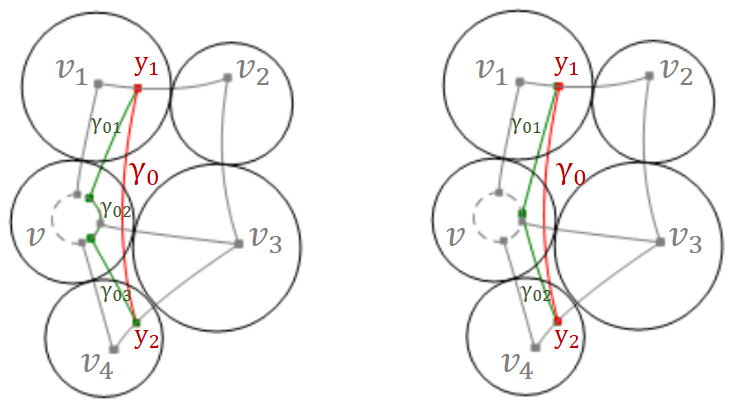} 
\caption{Illustrations when $\gamma_0$ intersects the center of a hypercycle.}
\label{Fig20}

}
\end{figure}
 
We divide $\{\gamma_j\}_{j=1}^{m}$ into two groups. A geodesic segment $\gamma_j$ is said to be ``bad" if it passes through the star of $v$ while it is said to be ``good" if it does not pass through the star. Next, we construct the corresponding geodesic segments $\gamma^{\prime}_j$ on the generalized hyperbolic conic circle packing determined by $\textbf{k}^{\prime}$. Keep the end points of $\gamma_j$ still and connect them by the geodesic segment on $\textbf{k}^{\prime}$. Then, by joining $\{\gamma^{\prime}_j\}_{j=1}^{m}$ in the same order as $\{\gamma_j\}_{j=1}^{m}$ are joined to form $\gamma$, we obtain a piecewise geodesic curve $\gamma^{\prime}$ on $\textbf{k}^{\prime}$. Clearly, the distance between $v_1$ and $v_2$ on the surface determined by $\textbf{k}'$ satisfies $\rho_{\textbf{k}^{\prime}}(v_1,v_2)\leq\ell^{\prime}(\gamma ')$. It is obvious that the length of each ``good" segment satisfies $\ell(\gamma_j)=\ell^{\prime}(\gamma_j')$. Then we only need to consider the ``bad" segments. There are three cases to be considered.

\textbf{Case one:} Suppose that the geodesic curvature of a circle centered at one end point of $\gamma$ is changing; that is, $v_1=v$. Assume that $\gamma_1$ connects $v$ to a point $p$ on the edge of $P_1$ opposite to $v$, where $k_{v}\in (1,\infty)$. Suppose that the polygon $P$ has $n$ vertices $v,u_1,\cdots,u_{n-1}$, where $k_{u_i}\in (0,+\infty),i=1,\cdots,n-1$. Let $r_v,r_{u_1},\cdots,r_{u_{n-1}}$ denote the radii of the generalized circles centered at $v,u_1,\cdots,u_{n-1}$ respectively. Let $\ell_p$ be the length of $\gamma_1$. Next, we show that $\ell_p$ is deceasing with $k_v$.

The polygon $P_1$ is divided by $\gamma_1$ into two polygons $D_1$ and $D_2$. Let $t$ be the distance between $p$ and its neighboring vertex in $D_1$. Since only the geodesic curvature of the circle centered at the vertex $v$ is changing, the length $\ell(u_i,u_{i+1}),i=1,\cdots,n-2,$ and $t$ remain unchanged. 

Denote by $x_1$ and $x_2$ the tangent points of the circle centered at $v$ with the generalized circles centered at $u_1$ and $u_{n-1}$, respectively. Passing through $x_1$ and $x_2$, there are two geodesics perpendicular to the edges $(v,u_1)$ and $(v,u_{n-1})$, respectively. On the induced hyperbolic polygon of the corresponding circle configuration, these two geodesics intersect at the center of the dual circle $C_{P_1}$, intersecting the boundary of $P_1$ at $q_1$ and $q_2$, respectively.

\begin{figure}[ht]
{
\centering
\includegraphics[height=5cm]{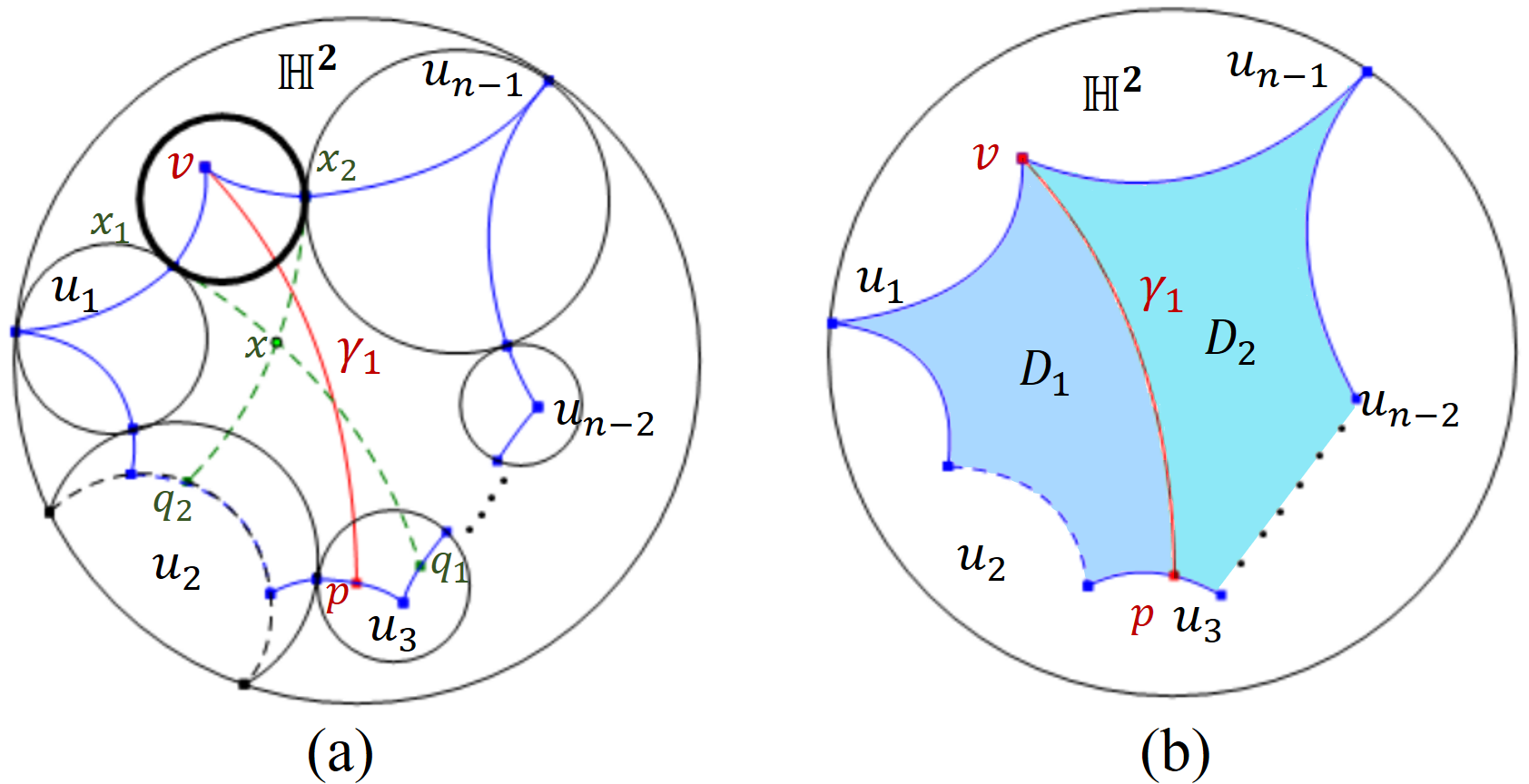} 
\caption{Illustration of the case one.}
\label{Fig21}
}
\end{figure}
Define $D$ as follows:
$$
D=\begin{cases}
    D_1 \text{ if $p$ is on the left of $q_1$}\\
    D_2 \text{ if $p$ is on the right of $q_1$ or exactly on $q_1$}.
    
\end{cases}\\
$$
Consider the hyperbolic polygon $D$. Under the assumption that $\boldsymbol{k}^{\prime}(v)>\boldsymbol{k}(v)$ and 
$\boldsymbol{k}^{\prime}(v')=\boldsymbol{k}(v)$ for all vertices $v'\in V\setminus \{v\}$, we know that $r^{\prime}(v)<r(v)$ and $r^{\prime}(v')=r(v')$ for all $v'\in V\setminus \{v\}$. Furthermore, the hyperbolic triangle $\Delta vx_1p$ (respectively, $\Delta vx_2p$) is obtuse when $D=D_1$ (respectively, $D=D_2$). Consequently, we obtain the length inequality $\ell'(\gamma^{\prime}_1)< \ell(\gamma_1)$.

\textbf{Case two:} Suppose that the geodesic curvature $k_v\in (0,+\infty)$ of the generalized circle centered at a vertex $v\in \cup_{i=1}^{n-1} V(f_i)$ is changing, where $V(f_i)$ denotes the collection of the vertices of the face $f_i\in F$. The segment $\gamma_j$ lies in the star of the vertex $v$ as follows. 

Let $u_1,\cdots,u_n$ be the boundary vertices of the generalized star of $v$. Let $x$ and $y$ be the boundary points of $\gamma_j$. If $\gamma_j$ passes through the center of a hyperbolic conical circle, then we cut $\gamma_j$ into two geodesic arcs $\gamma_{j_1}$ and $\gamma_{j_2}$. We can handle this situation as the same as we have done in Case one. By the discussions in the third paragraph of this proof (partially illustrated on Figure \ref{Fig20}),  we know that $\gamma_j$ does not pass through the center of any horocycle or the center of any hypercircle. In this case, $\gamma_j$ divides the star of the vertex $v$ into two components $D_1$ and $D_2$, and at least one of them is a hyperbolic polygon (see $D_1$ on Figure \ref{Fig22} (b)).

We divide $\gamma_j$ into geodesic segments on the induced hyperbolic polygons that intersect it. For example, on Figure \ref{Fig22}(a) $\gamma_j$ is divided into two pieces $\gamma_{j1}$ and $\gamma_{j2}$. 
Now we keep the distances from $y_1$ to $u_4$, from $y_3$ to $u_1$, and from $y_2$ to $u_9$ the same on $\tilde{\mathcal{S}}(\textbf{k})$ and $\tilde{\mathcal{S}}(\textbf{k}^{\prime})$. By Lemma \ref{lem-3.8} and Remark \ref{outside the induced hyperbolic polygon}, we obtain $$
\ell '(\gamma_{js}^{\prime})<\ell(\gamma_{js}).
$$

Since $\gamma_j^{\prime}$ constitutes a geodesic connecting $y_1$ and $y_2$ through the generalized star, its length does not exceed the total length of the concatenation of the sub-arcs $\gamma_{js}$. Consequently,
\[
\ell '(\gamma_j^{\prime})\leq\sum_s \ell '(\gamma^{\prime}_{js}) <\sum_s \ell(\gamma_{js}) = \ell(\gamma_j).
\]

\begin{figure}[ht]
{
\centering
\includegraphics[height=5cm]{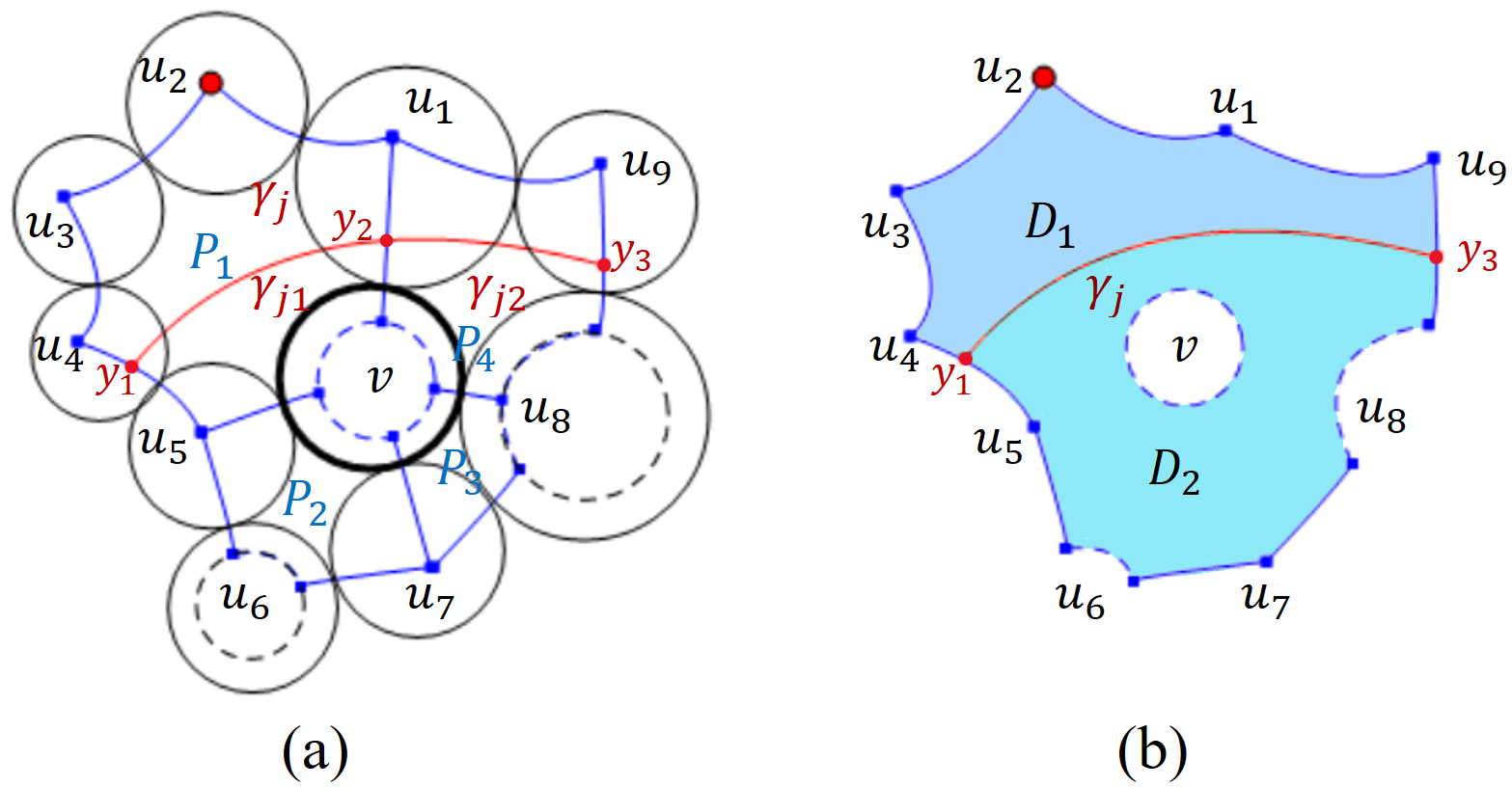} 
\caption{Illustration of the case two.}
\label{Fig22}

}
\end{figure}

\textbf{Case three:}
The segment $\gamma_j$ does not lie in the polygonal region $\Omega$ completely; that is, $\gamma_j$ has $2t$ $(t\geq2)$ intersections with the star of the vertex $v$. Then we can cut $\gamma_j$ into geodesic segments $\{\gamma_{j1},\cdots,\gamma_{jt}\}$ based on these intersections. 

\begin{figure}[ht]
{
\centering
\includegraphics[height=5cm]{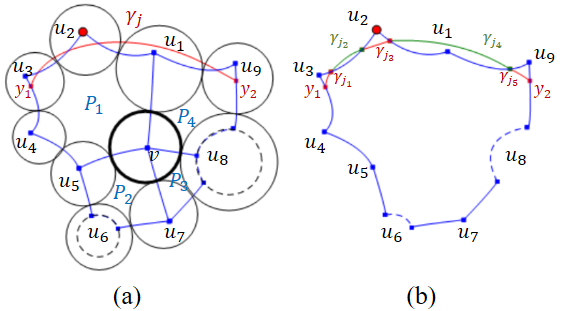} 
\caption{IIlustration of the case three.}
\label{Fig23}

}
\end{figure}

Let $\mathcal{X}$ be the set of the geodesic segments that lie in the star of the vertex $v$ and $\mathcal{Y}$ be the set of the geodesic segments that stay out of the star of the vertex $v$. Each geodesic segment in $\mathcal{Y}$ has the same length on $\tilde{\mathcal{S}}(\textbf{k})$ and $\tilde{\mathcal{S}}(\textbf{k}^{\prime})$. From the previous discussion, we see that for any $\gamma_{0}^{\prime}\in \mathcal{X}$, there exists a geodesic segment $\gamma_{0}^{\prime}$ in $\textbf{k}^{\prime}$ such that $\ell '(\gamma_{0}^{\prime})<\ell(\gamma_{0})$.

Therefore, by the comparisons obtained in the above three cases, we can see that when $\mathbf{k}(v)<\mathbf{k}'(v)$ and $\mathbf{k}(v')=\mathbf{k}'(v')$ at any other vertex $v'$,
$\rho_{\textbf{k}^{\prime}}(v_1, v_2)<\rho_\textbf{k}(v_1, v_2)$ if the shortest piecewise geodesic segment connecting $v_1$ and $v_2$ on $\tilde{\mathcal{S}}(\textbf{k})$ intersecting the star of the vertex $v$; otherwise, $\rho_{\textbf{k}^{\prime}}(v_1, v_2)=\rho_\textbf{k}(v_1, v_2)$.

Assume that $\textbf{k}(w)<\textbf{k}'(w)$ at some $w\in V^\partial$. By Theorem \ref{theorem-6.1}, $\textbf{k}(v)<\textbf{k}'(v)$ at any $\in V^\circ$. It follows that for any two $v_1,v_2\in \tilde{V}$ and $v_1\in V^\circ$, the shortest piecewise geodesic segment connecting $v_1$ and $v_2$ much intersect the star of some vertex $v$ at which $\mathbf{k}(v)<\mathbf{k}'(v)$. Thus, $\rho_{\textbf{k}^{\prime}}(v_1,v_2)<\rho_{\textbf{k}}(v_1,v_2)$. Therefore, if $\rho_{\textbf{k}}(v_1,v_2)=\rho_{\textbf{k}^{\prime}}(v_1,v_2)$ for some $v_1,v_2\in \tilde{V}$ and $v_1\in V^\circ$, then $\textbf{k}(w)=\textbf{k}'(w)$ for any $w\in V^\partial$.  By the uniqueness of the solution to our discrete boundary value problem, we know
$\textbf{k}=\textbf{k}^{\prime}$. This proves (b).
\end{proof}



\begin{theorem}[Maximum Modulus Principle]\label{theorem-6.5}
 Let $\textbf{k}$ and $\textbf{k}^{\prime}$ be the same generalized circle packings considered in Theorem \ref{theorem-6.1}. Define
\begin{equation}\label{equa44}
\phi: V\rightarrow \mathbb{R}_+: v\mapsto \phi(v)=\frac{\textbf{k}^{\prime}_v}{\textbf{k}_v}.
\end{equation} Let $M$ be the maximal value of $\phi$. Then

(a) $M\ge 1$;

(b) if $M>1$, then it is only attained at some boundary vertex $w_j\in V^{\partial}$;

(c) if $M$ is attained at some interior vertex, then $M=1$ and hence $\textbf{k}=\textbf{k}^{\prime}$.
\end{theorem}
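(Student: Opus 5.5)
We plan to derive all three statements from the vertex-wise comparison in Theorem \ref{theorem-6.1} together with one extra monotonicity fact: how the total geodesic curvature at an interior vertex responds to a \emph{uniform} rescaling of the curvatures in its star.

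For (a), the hypotheses are exactly those of Theorem \ref{theorem-6.1}. Part (a) of that theorem gives $\textbf{k}(v)\le\textbf{k}'(v)$ for every $v\in V$, so $\phi(v)\ge 1$ everywhere and hence $M\ge1$.

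For (b), we argue by contradiction. Suppose $M>1$ and $\phi(v_0)=M$ at some $v_0\in V^{\circ}$. Then $\textbf{k}'(v_0)=M\textbf{k}(v_0)$, and $\textbf{k}'(u)\le M\textbf{k}(u)$ at every neighboring vertex $u$ of $v_0$, boundary ones included. Write $M\textbf{k}$ for the packing with curvatures $M\textbf{k}(v)$. By Lemma \ref{lem-3.3}(b), $\textbf{T}$ at $v_0$ is strictly decreasing in the neighboring curvatures. Passing from $\textbf{k}'$ to $M\textbf{k}$ keeps the curvature at $v_0$ fixed and does not decrease the neighbors, so
\[
\textbf{T}_{\textbf{k}'}(v_0)\ \ge\ \textbf{T}_{M\textbf{k}}(v_0).
\]
The key claim is then
\[
\textbf{T}_{M\textbf{k}}(v_0)>\textbf{T}_{\textbf{k}}(v_0)\quad\text{for } M>1 .
\]
To prove it, we set $g(t)=\textbf{T}_{e^{t}\textbf{k}}(v_0)$ and aim to show $g'(t)>0$. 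In the logarithmic coordinates $s=\ln k$ we have
\[
g'(t)=\sum_{j}\frac{\partial \mathscr T_{v_0}}{\partial s_j},
\]
where the sum runs over $v_0$ and all its neighbors, boundary vertices included. This is the analogue of Proposition \ref{prop5.2}(1) for the full star, and we would establish it face by face. We split $\mathscr T_{v_0}=\sum_f T_{v_0,f}$ and use the symmetry $\partial T_{i,P}/\partial s_j=\partial T_{j,P}/\partial s_i$ on each face (Proposition \ref{prop5.2}(2)) to convert the row sum into the column sum. That column sum is $\partial(\sum_j T_{j,P})/\partial s_{v_0}$, which is positive by Lemma \ref{lem-3.3-1}(a). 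Granting this, integrating $g'$ from $0$ to $\ln M$ gives the key claim. We then obtain $\textbf{T}_{\textbf{k}'}(v_0)>\textbf{T}_{\textbf{k}}(v_0)$, which contradicts $\textbf{T}_{\textbf{k}'}(v_0)=\textbf{T}_{\textbf{k}}(v_0)=\hat{\textbf{T}}(v_0)$. Hence a maximum $M>1$ can only be attained on $V^{\partial}$.

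For (c), if the maximum is attained at an interior vertex $v_0$, then (b) forces $M=1$, so $\textbf{k}'(v_0)=\textbf{k}(v_0)$. Equality at an interior vertex is exactly the hypothesis of Theorem \ref{theorem-6.1}(b), which yields $\textbf{k}=\textbf{k}'$.

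The main obstacle is the scaling monotonicity $g'(t)>0$. The row/column symmetry has to be justified per face in log coordinates, including faces that contain boundary vertices, and across the horocycle threshold $k=1$ where the formulas change type. If the symmetry argument fails, the fallback is to compute $\partial T_{v_0,f}/\partial t$ directly from the explicit formula for $T_{v,f}$ in terms of $k_v$ and $k_{P_f}$ given in the proof of Theorem \ref{theorem-5.1}. Together with formula (\ref{equa-10}) for $\partial k_P/\partial k_i$, this reduces $g'(t)>0$ to an inequality on a single face.
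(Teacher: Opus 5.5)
Your proof is correct and rests on the same mechanism as the paper's. That mechanism is the negativity of the off-diagonal log-coordinate partials of $\textbf{T}$ at $v_0$, together with positivity of the full row sum $\sum_j \partial\mathscr T_{v_0}/\partial s_j$ over $v_0$ and all its neighbors, boundary ones included. Your per-face symmetry argument via Lemma~\ref{lem-3.3-1}(a) validly establishes this row sum, and $T_{v,f}$ is smooth across $k=1$, so that step is not an obstacle. The paper only packages it differently: it applies Rolle's theorem along the straight segment from $\ln\textbf{k}$ to $\ln\textbf{k}'$ on the star of $v_0$ and bounds $T_0'(\xi)\ge\bigl(\sum_{j}\partial T_0/\partial s_j\bigr)\ln M>0$, rather than comparing through $M\textbf{k}$ in two steps as you do.
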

\begin{proof} By Theorem \ref{theorem-6.1}, we only need to prove (b).
Suppose that $M>1$ and there exists a vertex $v_0 \in V^{\circ}$ such that $\frac{\tilde{k}_{v_0}}{k_{v_0}}=M$. Then 
$\frac{\tilde{k}_v}{k_v}\le M$ for any $v \in V$.

Denote the neighboring vertices of $v_0$ by $w_1, \cdots ,w_n$ and let $s_v=\ln k_v$. Then $\tilde{s}_{v_0}-s_{v_0}=\ln M>0$ and $\tilde{s}_{v_0}-s_{v_0} \geq \tilde{s}_{w_j}-s_{w_j}\ge 0$ for each $j \in\{1, \cdots n\}$. 

Let $s_0(t)=(1-t) s_{v_0}+t \tilde{s}_{v_0}$ and $s_j(t)=(1-t) s_{w_j}+t \tilde{s}_{w_j}$ each $j \in\{1, \cdots n\}$. Define 
$$
s(t)=\left(s_0(t), s_1(t), \cdots s_n(t)\right) \text{ and }
T_0(t)=T_0(s(t)).$$
Then $T_0(0)=T_0(1)$. By Rolle's theorem, there exists $\xi \in(0,1)$ such that $T_0^{\prime}(\xi)=0$; that is,
$$
I=\left.\sum_{j=1}^n \frac{\partial T_0}{\partial s_j} \cdot \frac{d s_j}{d t}\right|_{t=\xi}+\left.\frac{\partial T_0}{\partial s_0} \cdot \frac{d s_0}{d t}\right|_{t=\xi}=0.$$
On the other hand, applying Proposition \ref{prop5.2} we obtain
$$
I=\left.\sum_{j=1}^n \frac{\partial T_0}{\partial s_j}\right|_{s_j=s_j(\xi)} \cdot (\tilde{s}_{w_j}-s_{w_j})+\left.\frac{\partial T_0}{\partial s_0} \right|_{s_0=s_0(\xi)}\cdot(\tilde{s}_{v_0}-s_{v_0})$$
$$
\geq[\left.\sum_{j=0}^n \frac{\partial T_0}{\partial s_j}\right|_{s_j=s_j(\xi)}] \cdot (\tilde{s}_{v_0}-s_{v_0})>0 .
$$
This is a contradiction. Therefore, if $M>1$, then $M$ cannot be reached at any interior vertex. 
\end{proof}

\section{Appendix}\label{appendix}

\subsection{Proof of Formula (\ref{equ-6})}\label{sec-7.1}

In this subsection, we derive the formula (\ref{equ-6}). 
At first, by the proof of Lemma 2.1 in \cite{MR4842734}, we obtain 
$$\cot \frac{\theta_i}{2}=\begin{cases} \tanh r_i\sinh r_P=\frac{k_i}{\sqrt{k_P^2-1}},\quad 0<k_i<1\\
\sinh r_P=\frac{1}{\sqrt{k_P^2-1}},\quad k_i=1\\
\operatorname{coth} r_i \sinh r_P=\frac{k_i}{\sqrt{k_P^2-1}},\quad k_i>1.
\end{cases}
$$
For a generalized circle configuration $C_i$, $i=1, 2, \cdots, n$, with geodesic curvatures $k_i \;(>0)$, $i=1, 2, \cdots, n$,  respectively,
$$
\left\{\begin{array}{l}
\cot \frac{\theta_1}{2}=\frac{k_1}{\sqrt{k_P^2-1}} \\
\cot \frac{\theta_2}{2}=\frac{k_2}{\sqrt{k_P^2-1}} \\
\vdots\\
\cot \frac{\theta_n}{2}=\frac{k_n}{\sqrt{k_P^2-1}} \\
\theta_1+\theta_2+\cdots+\theta_n=2\pi .
\end{array}\right.
$$
Then
$$
\begin{aligned}
\cot\frac{\theta_n}{2}&=\cot(\pi-\frac{\theta_1}{2}-\cdots-\frac{\theta_{n-1}}{2})\\&=\frac{\cos \left(\pi-\frac{\theta_1}{2}-\cdots-\frac{\theta_{n-1}}{2}\right)}{\sin \left(\pi-\frac{\theta_1}{2}-\cdots-\frac{\theta_{n-1}}{2}\right)}\\
& =\frac{-\sum_{|U| \text{ even}}(-1)^{\frac{|U|}{2}}(\prod _{i\in U}\sin\frac{\theta_i}{2})(\prod_{i\notin U}\cos\frac{\theta_i}{2})}{\sum_{|V| \text{ odd}}(-1)^{\frac{|V|-1}{2}}(\prod_{i\in V}\sin\frac{\theta_i}{2})(\prod_{i\notin V}\cos\frac{\theta_i}{2})}\\
&=\frac{-\sum_{|U| \text{ even}}(-1)^{\frac{|U|}{2}}\prod_{i\notin U}\cot\frac{\theta_i}{2}}{\sum_{|V| \text{ odd}}(-1)^{\frac{|V|-1}{2}}\prod_{i\notin V}\cot\frac{\theta_i}{2}},
\end{aligned}
$$
where $U,V\subset\{1,\cdots,n-1\}$, $|U|$ is even, and $|V|$ is odd. 
It follows that
$$
\frac{-\sum_{m=0}^{[(n-1)/2]}(-1)^m\cdot\frac{\sum_{1\leq i_1<i_2<\cdots<i_{n-1-2m}\leq n-1}k_{i_1}k_{i_2}\cdot k_{i_{n-1-2m}}}{(\sqrt{k_P^2-1})^{n-1-2m}}}{\sum_{m=0}^{[(n-2)/2]}(-1)^m\cdot\frac{\sum_{1\leq i_1<i_2<\cdots<i_{n-2-2m}\leq n-1}k_{i_1}k_{i_2}\cdot k_{i_{n-2-2m}}}{(\sqrt{k_P^2-1})^{n-2-2m}}}=\frac{k_n}{\sqrt{k_P^2-1}},
$$
which can be rewritten as
$$\sum_{m=0}^{[\frac{n-1}2{}]}(-1)^mS_{n-1-2m}(k_P^{2}-1)^{m}=0,$$
where 
$$S_0=1 \text{ and }S_{t}=\sum_{1\leq i_1<\cdots <i_{t}\leq n} k_{i_1}\cdots k_{i_{t}} \text{ for } t=1,\cdots,n.$$

So, when $n=3$, the equation is written as $S_2(k_P^2-1)^0-S_0(k_P^2-1)=0$; that is,
$$
k_P^2=k_1 k_2+k_1 k_3+k_2 k_3+1,
$$
which is obtained in \cite{MR4878836}.

When $n=4$, the equation is written as $S_3(k_P^2-1)^0+S_1(k_P^2-1)=0$; that is
$$
k_P^2=\frac{S_3}{S_1}+1=\frac{k_1 k_2 k_3+k_1 k_2 k_4+k_1 k_3 k_4+k_2 k_3 k_4}{k_1+k_2+k_3+k_4}+1.
$$
Let $w=k_P^2-1$ and 
$$
f_n(w)=\sum_{m=0}^{[\frac{n-1}{2}]}(-1)^mS_{n-1-2m}w^{m}.
$$

Now we show that for $\forall \; n\geq 3$ and $\forall \; k_1,k_2,\cdots,k_n\in(0,+\infty)$, the polynomial $f_n(w)$ has a real positive root. From the above discussions, this is true for $n=3, 4$. It remains to prove that it is true for $n\geq 5$. We divide the proof into the following three cases. 

\textbf{(i)} Assume that $n=4j+3$ and $n=4j+4$, $j\geq 1$. Then
$$
f_n(w)=\sum_{m=0}^{2j+1}(-1)^mS_{n-1-2m}w^m.
$$
Clearly, $f_n(0)=S_{n-1}>0$. Since the leading coefficient of $f_n(w)$ is 
$$(-1)^{2j+1}S_{n-4j-3}=\begin{cases}
    -S_0<0,\quad n=4j+3\\
    -S_1<0,\quad n=4j+4 ,
\end{cases}$$
it follows that 
$$
\lim_{w\rightarrow -\infty}f_n(w)=+\infty.
$$
By the intermediate value theorem, the continuous function $f_n(w)$ has a positive real root.

\textbf{(ii)} Assume that $n=4j+1$, $j\geq 1$. Then
$$
\begin{aligned}
f_n(w)=&\sum_{m=0}^{2j}(-1)^mS_{4j-2m}w^m\\
=&S_{4j}-S_{4j-2}w+S_{4j-4}w^2-S_{4j-6}w^3+\cdots\\
&+S_4w^{2j-2}-S_2w^{2j-1}+S_0w^{2j}.
\end{aligned}
$$
Clearly, $f_n(0)=S_{n-1}>0$. We show $f_n(\frac{S_2}{2S_0})<0$. Clearly,
$$
\begin{aligned}
f_n(\frac{S_2}{2S_0})=&S_{4j}-\frac{S_{4j-2}S_2}{2S_0}+\frac{S_{4j-4}S_2^2}{(2S_0)^2}-\frac{S_{4j-6}S_2^3}{(2S_0)^3}+\cdots\\
&+\frac{S_{4}S_2^{2j-2}}{(2S_0)^{2j-2}}-\frac{S_{2}S_2^{2j-1}}{(2S_0)^{2j-1}}+\frac{S_{0}S_2^{2j}}{(2S_0)^{2j}}\\
=&-\frac{1}{(2S_0)^{2j}}[(2S_0)^{2j-1}(S_{4j-2}S_2-2S_{4j})\\
&+(2S_0)^{2j-3}S_2^2(S_{4j-6}S_2-2S_{4j-4})+\cdots\\
&+(2S_0)^{2}S_2^{2j-4}(S_{6}S_2-2S_{8})\\
&+(2S_0)S_2^{2j-2}(S_2^{2}-4S_{4})].
\end{aligned}
$$
Since 
\begin{equation}\label{equ-7}
\begin{aligned}
S_tS_2-4S_{t+2}=&(\sum_{1\leq i_1<\cdots <i_{t}\leq n} k_{i_1}\cdots k_{i_{t}})\cdot (\sum_{1\leq i_1\leq i_2\leq n}k_{i_1}k_{i_2})\\
&-4\sum_{1\leq i_1<\cdots <i_{t+2}\leq n} k_{i_1}\cdots k_{i_{t+2}}\\
>&0 \text{ when } t\geq 2,
\end{aligned}
\end{equation}
we obtain $f_n(\frac{S_2}{2S_0})<0$. By the intermediate value theorem, there exists $w_0\in(0,\frac{S_2}{2})$ such that $f_n(w_0)=0$.

\textbf{(iii)} Assume that $n=4j+2$, $j\geq 1$. Then
$$
\begin{aligned}
f_n(w)=&\sum_{m=0}^{2j}(-1)^mS_{4j+1-2m}w^m\\
=&S_{4j+1}-S_{4j-1}w+S_{4j-3}w^2-S_{4j-5}w^3+\cdots\\
&+S_5w^{2j-2}-S_3w^{2j-1}+S_1w^{2j}.
\end{aligned}
$$
Clearly, $f_n(0)=S_{n-1}>0$. We show that $f_n(\frac{S_3}{2S_1})<0$. Clearly,

$$
\begin{aligned}
f_n(\frac{S_3}{2S_1})=&S_{4j+1}-\frac{S_{4j-1}S_3}{2S_1}+\frac{S_{4j-3}S_3^2}{(2S_1)^2}-\frac{S_{4j-5}S_3^3}{(2S_1)^3}+\cdots\\
&+\frac{S_{5}S_3^{2j-2}}{(2S_1)^{2j-2}}-\frac{S_{3}S_3^{2j-1}}{(2S_1)^{2j-1}}+\frac{S_{1}S_3^{2j}}{(2S_1)^{2j}}\\
=&-\frac{1}{(2S_1)^{2j}}[(2S_1)^{2j-1}(S_{4j-1}S_3-2S_1S_{4j+1})\\
&+(2S_1)^{2j-3}S_3^2(S_{4j-5}S_3-2S_1S_{4j-3})+\cdots\\
&+(2S_1)^{2}S_3^{2j-4}(S_{7}S_3-2S_1S_{9})\\
&+(2S_1)S_3^{2j-2}(S_3^{2}-4S_1S_{5})].
\end{aligned}
$$
Using the inequality (\ref{equ-7}), we obtain $f_n(\frac{S_3}{2S_1})<0$. Again, by the intermediate value theorem, there exists $w_0\in(0,\frac{S_2}{2})$ such that $f_n(w_0)=0$.

\subsection{The edge-angle relationship for a hyperbolic polygon induced by a generalized circle configuration}\label{sec-7.2}

\begin{figure}[ht]
{
\centering
\includegraphics[height=4cm]{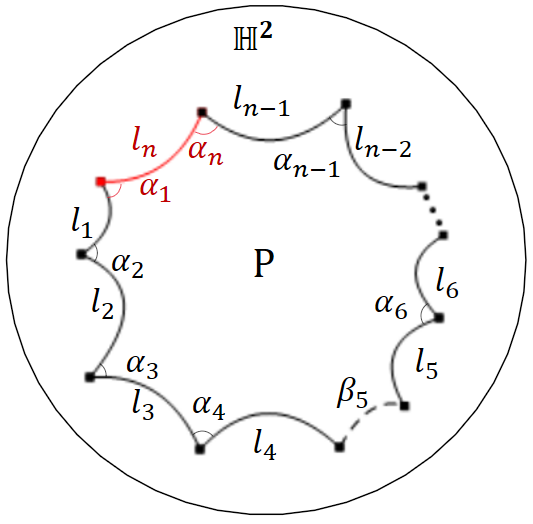} 
\caption{An example of generalized hyperbolic polygon.}
\label{Fig24}

}
\end{figure}

Let

\begin{equation}\label{equa-25}
\mathcal{A}_{l_i}=\mathcal{H}_{l_i}=\left(\begin{array}{ccc}
\cosh l_i & 0 & \sinh l_i \\
0 & 1 & 0 \\
\sinh l_i & 0 & \cosh l_i
\end{array}\right),
\end{equation}

and 

\begin{equation}\label{equa-26}
\mathcal{A}_{\alpha_i}=
\mathcal{R}_{\pi-\alpha_i},\quad \mathcal{A}_{\beta_j}=\mathcal{R}_{\frac{\pi}{2}}\mathcal{H}_{\beta_j}\mathcal{R}_{\frac{\pi}{2}}.
\end{equation}

Here,

\begin{equation}\label{equa-27}
\mathcal{R}_{\pi-\alpha}=\left(\begin{array}{ccc}
\cos(\pi- \alpha) & -\sin(\pi-\alpha) & 0 \\
\sin (\pi-\alpha) & \cos(\pi- \alpha) & 0 \\
0 & 0 & 1
\end{array}\right).
\end{equation}

For the polygon in Figure \ref{Fig24}, we put $v_1$ to $(0,0,1)$ at Minkowski model by Möbius transformation such that the unit vector at the point $v_1$ tangent to the edge $v_1v_2$ is $\textbf{n}=(1,0,0)$. Transformation $\mathcal{A}_{l_1}$ means a translation along with the edge $v_1 v_2$ for length $l_1$. Consequently, it moves the unit vector $\textbf{n}$ at the point $v_1$ to the vector $\textbf{n}\cdot\mathcal{A}_{l_1}$ at the point $v_2$. Transformation $\mathcal{A}_{\alpha_2}$ means a counterclockwise rotation centered on $v_2$ for angle $\pi-\alpha_2$. Consequently, the vector $\textbf{n}\cdot\mathcal{A}_{l_1}\mathcal{A}_{\alpha_2}$ is tangent to edge $v_2v_3$. And so on, we are back to the starting point $v_1$ when we are done with the last rotation $\mathcal{A}_{\alpha_1}$, that is, $\textbf{n}\cdot\mathcal{A}_{l_1}\mathcal{A}_{\alpha_2}\cdots \mathcal{A}_{l_n}\mathcal{A}_{\alpha_1}=\textbf{n}$. In fact, the composite map of this is an identity. Therefore, we obtain
\begin{equation}\label{equa-28}
\mathcal{A}_{l_1}\mathcal{A}_{\alpha_2}\mathcal{A}_{l_2}\mathcal{A}_{\alpha_3}\cdots \mathcal{A}_{l_{n-1}}\mathcal{A}_{\alpha_n}\mathcal{A}_{l_{n}}\mathcal{A}_{\alpha_1}=id,
\end{equation}
that is,
$$
\begin{aligned}
&\left(\begin{array}{ccc}
\cosh l_1 & 0 & \sinh l_1 \\
0 & 1 & 0 \\
\sinh l_1 & 0 & \cosh l_1
\end{array}\right)
\left(\begin{array}{ccc}
-\cos \alpha_{2} & -\sin \alpha_{2} & 0 \\
\sin \alpha_{2} & -\cos \alpha_{2} & 0 \\
0 & 0 & 1
\end{array}\right)\\
&\cdots
\left(\begin{array}{ccc}
\cosh l_n & 0 & \sinh l_n \\
0 & 1 & 0 \\
\sinh l_n & 0 & \cosh l_n
\end{array}\right)
\left(\begin{array}{ccc}
-\cos \alpha_{1} & -\sin \alpha_{1} & 0 \\
\sin \alpha_{1} & -\cos \alpha_{1} & 0 \\
0 & 0 & 1
\end{array}\right)=i d.
\end{aligned}
$$
This is a system of equations consisting of nine ternary linear equations, from which the three unknown variables $l_{n},\alpha_{1}$ and $\alpha_{n}$ are determined.

\bigskip
\textbf{Acknowledgments:}
Guangming Hu is supported by the NSF of China (Grant No.12101275) and by the Natural Science Research Start-up Foundation of Nanjing University of Posts and Telecommunications to Recruit Talents (Grant No. NY224040). Jun Hu is supported by a fellowship leave award from the City University of New York for the academic year 2025-26, and the Research Foundation of CUNY (PSC-CUNY No. 68140-00 56 and No. GR-00017614). Yi Qi is supported by the NSF of China (Grant No. 12271017). 

\bigskip
\textbf{Data availability statement:}
Data sharing is not applicable to this work as no new data is created or analyzed in the study.

\bibliographystyle{plain}
\bibliography{refs}

\noindent Guangming Hu, Email: 20230210@njupt.edu.cn\\
\noindent{College of Science, Nanjing University of Posts and Telecommunications, Nanjing, 210003, P.R. China} 
\\

\noindent Jun Hu, Email: junhu@brooklyn.cuny.edu or JHu1@gc.cuny.edu\\
\noindent{Department of Mathematics, Brooklyn College of CUNY, Brooklyn, NY 11210, USA and Ph.D. Program in Mathematics, Graduate Center of CUNY, 365 Fifth Avenue, New York, NY 10016, USA} 
\\

\noindent Lishan Li, Email: lishan-li@buaa.edu.cn\\
\noindent{School of Mathematical Sciences, Beihang University, Beijing, 102206, P. R. China}
\\

\noindent Yi Qi, Email: yiqi@buaa.edu.cn\\
\noindent{School of Mathematical Sciences, Beihang University, Beijing, 102206, P.R. China}

\end{document}